\numberwithin{equation}{section}
\newtheorem{Theorem}{Theorem}[section]
\newtheorem{Lemma}[Theorem]{Lemma}
\newtheorem{Proposition}[Theorem]{Proposition}
\newtheorem{Corollary}[Theorem]{Corollary}
\newtheorem{Assumption}{H.\!\!}
\theoremstyle{definition}
\newtheorem{Definition}{Definition}[section]
\theoremstyle{remark}
\newtheorem{Remark}{Remark}[section]
 \def\p{\partial} \def\nb{\nonumber}
\def\to{\rightarrow}
 \def\ol{\overline}    
\def\Om{\Omega}  \def\om{\omega} 
\newcommand{\q}{\quad}   \newcommand{\qq}{\qquad}
\def\l{\label}    \def\fa{\forall}
\def\b{\beta}  \def\a{\alpha} 
\def\eps{\varepsilon}
 \def\t{\times}  \def\lam{\lambda}
\def\ms{\medskip}
\def \la{\langle} \def\ra{\rangle}
\def\cC{\mathcal{C}}
\def\cE{\mathcal{E}}
\def\cF{\mathcal{F}}
\def\cG{\mathcal{G}}
\def\cI{\mathcal{I}}
\def\cM{\mathcal{M}}
\def\cN{\mathcal{N}}
\def\cO{\mathcal{O}}
\def\cP{\mathcal{P}}
\def\cR{\mathcal{R}}
\def\cS{\mathcal{S}}
\def\cU{\mathcal{U}}
\def\cV{\mathcal{V}}
\def\cW{\mathcal{W}}
\def\cY{\mathcal{Y}}
\def\cZ{\mathcal{Z}}
\def\sE{{\mathbb{E}}}
\def\sF{{\mathbb{F}}}
\def\sI{{\mathbb{I}}}
\def\sN{{\mathbb{N}}}
\def\sP{\mathbb{P}}
\def\sR{{\mathbb R}}
\def\sS{{\mathbb{S}}}
\newcommand{\fbsde}{FBS$\Delta$E }
\newcommand{\err}{\textnormal{ERR}}
\newcommand{\tr}{\textnormal{tr}}
\DeclareMathOperator*{\argmin}{arg\,min}
\newcommand{\lc}
{\mathrel{\raise2pt\hbox{${\mathop<\limits_{\raise1pt\hbox
{\mbox{$\sim$}}}}$}}}
\newcommand{\gc}
{\mathrel{\raise2pt\hbox{${\mathop>\limits_{\raise1pt\hbox{\mbox{$\sim$}}}}$}}}
\newcommand{\ec}
{\mathrel{\raise2pt\hbox{${\mathop=\limits_{\raise1pt\hbox{\mbox{$\sim$}}}}$}}}
\def\bb{\begin{equation}} \def\ee{\end{equation}}
\def\bbn{\begin{equation*}} \def\een{\end{equation*}}
\def\beqn{\begin{eqnarray}}  \def\eqn{\end{eqnarray}}
\def\beqnx{\begin{eqnarray*}} \def\eqnx{\end{eqnarray*}}
\def\bn{\begin{enumerate}} \def\en{\end{enumerate}}
\def\bd{\begin{description}} \def\ed{\end{description}}
\begin{document}

\title{
A posteriori error estimates
for fully coupled 
McKean--Vlasov
forward-backward SDEs
}
\author{
Christoph Reisinger\thanks{
Mathematical Institute, University of Oxford, Oxford OX2 6GG, UK
 ({\tt christoph.reisinger@maths.ox.ac.uk, 
wolfgang.stockinger@maths.ox.ac.uk,
yufei.zhang@maths.ox.ac.uk})}
\and
Wolfgang Stockinger\footnotemark[1]
\and
Yufei Zhang\footnotemark[1]
}
\date{}

\maketitle


\noindent\textbf{Abstract.} 
 Fully coupled
 McKean--Vlasov
  forward-backward 
 stochastic differential equations 
 (MV-FBSDEs)
  arise naturally from 
   large population optimization problems.
 Judging the quality of  given numerical solutions
 for MV-FBSDEs,
 which 
 usually require Picard iterations and approximations of  nested conditional expectations,
   is typically difficult.
This paper proposes an {a posteriori} error  estimator 
to quantify the $L^2$-approximation error of an arbitrarily generated 
approximation  on a time grid.
We establish that  
the error estimator   
is equivalent to 
the global approximation error between
 the given numerical solution 
 and the solution of
a forward Euler discretized MV-FBSDE.
A crucial and challenging step in the analysis is the proof of stability of this Euler approximation to the MV-FBSDE, which is of independent interest.
We further demonstrate that,
for  sufficiently fine time grids,
the  accuracy
of numerical solutions
 for solving  the continuous MV-FBSDE
 can also be measured by the  error estimator.
The  error  estimates  
 justify the use of residual-based algorithms
 for solving 
MV-FBSDEs.
Numerical experiments 
for   MV-FBSDEs arising from 
  mean field control and games   
confirm the effectiveness and practical applicability of the 
error estimator.

\medskip
\noindent
\textbf{Key words.} 
Computable error bound, {a posteriori} error estimate, 
McKean--Vlasov,  fully coupled forward-backward SDE, 
mean field control and games,
 Deep BSDE Solver

\ms
\noindent
\textbf{AMS subject classifications.} 65C30, 60H10, 65C05, 49N80  	

\medskip

\section{Introduction}\l{sec:intro}
In this article, we 
propose an {a posteriori} error estimator to quantify the approximation accuracy 
of  given numerical solutions to
the following 
  MV-FBSDEs:
for all $t\in [0,T]$,
\begin{subequations}\l{eq:fbsde_conts_intro}
\begin{align}
 X_t&=\xi_0+\int_0^t b(s,X_s,Y_s,Z_s,\sP_{(X_s,Y_s,Z_s)})\,ds  +
 \int_0^t \sigma (s,X_s,Y_s,Z_s,\sP_{(X_s,Y_s,Z_s)})\, d W_s, 
 \l{eq:fbsde_conts_fwd_intro}
\\
 Y_t&=g(X_T,\sP_{X_T})+\int_t^Tf(s,X_s,Y_s,Z_s,\sP_{(X_s,Y_s,Z_s)})\,ds-\int_t^TZ_s\,d W_s,
  \l{eq:fbsde_conts_bwd_intro}
\end{align}
\end{subequations}
where 
$X,Y,Z$ are unknown solution processes taking values in $\sR^n,\sR^m, \sR^{m\t d}$, respectively,
$T>0$ is an arbitrary given finite number, 
$\xi_0$ is a given $n$-dimensional  random variable, 
$W$ is a $d$-dimensional standard Brownian motion,
$\sP_{(X_t,Y_t,Z_t)}$ is the marginal law of the process $(X,Y,Z)$
at time $t\in [0,T)$, 
$\sP_{X_T}$  is the marginal law of the process $X$
at the terminal time $T$,
and
$b,\sigma, g,h$ are given functions with appropriate dimensions,
which will be called the generator of \eqref{eq:fbsde_conts_intro}
as in \cite{yong2010}.

Such equations extend the classical FBSDEs without McKean--Vlasov interaction,  
i.e., the generator $(b,\sigma, g,h)$ is independent of the distribution of the solution triple $(X,Y,Z)$,
and  play an important role in 
large population optimization problems
(see e.g.~\cite{peng1999,carmona2013,bensoussan2015,carmona2015} and the references therein).
In particular, by applying the stochastic maximum principle, one can  
construct 
both the  equilibria
 of  the mean field games
and the solution to optimal mean field control problems
based on the solution triple $(X,Y,Z)$
of the fully-coupled MV-FBSDE \eqref{eq:fbsde_conts_intro}.
Moreover, the  Feynman-Kac representation formula
 for  partial differential equations  (PDEs)
can be generalized 
to certain nonlinear nonlocal PDEs 
defined on 
 the Wasserstein space
 (also known as  ``master equations") 
by using MV-FBSDE \eqref{eq:fbsde_conts_intro},
where 
the processes $Y$ and $Z$
give a stochastic representation of
the solutions  to master equations
and the gradient of the solutions, respectively
(see e.g.~\cite{chassagneux2014,buckdahn2017,chassagneux2019}).

\paragraph{Numerical challenges in solving MV-FBSDEs.}
As the solution to \eqref{eq:fbsde_conts_intro} is in general not known analytically, 
many numerical schemes have been proposed to solve these  nonlinear equations
in various special cases,
which  typically involve two steps.
First, 
a  time-stepping scheme,
such as the Euler-type discretizations in \cite{bouchard2004,zhang2004, bender2008,lionnet2015},
is employed to discretized  
the continuous-time dynamics
\eqref{eq:fbsde_conts_intro}  
into a discrete-time MV-FBSDE,
whose solution can be expressed 
in terms of  nested conditional expectations defined on the time grid.
Second,
 a suitable numerical procedure  is introduced to solve the  discrete-time MV-FBSDE,
which usually consists of  projecting the nested conditional expectations 
onto some trial spaces by least-squares regression
(see e.g.~\cite{delarue2006,gobet2006, bender2008,chaudru2015,e2017,andrea2019,carmona2019,
chassagneux2019,fouque2019, germain2019, ji2020,robert2019,hure2020}).

However, in the absence of an analytic solution,
 it is typically difficult to judge the quality of   a  numerical approximation, 
 especially 
  in the practically relevant pre-limit situation
  (i.e., for a given choice of discretization parameters)
or in high-dimensional settings.
This is mainly 
 due to the following reasons:
(1) The available computational resources constrain us to adopt  
a trial space with  limited approximation capacity 
   in the simulation, 
such as polynomials  of fixed degrees  (see e.g.~\cite{bender2008}) or  neural networks of fixed sizes
(see e.g.~\cite{e2017,fouque2019,germain2019}).
Hence, it is unclear whether the chosen trial space is 
rich enough to approximate the required conditional expectations
up to the desired accuracy.
(2)
 It is well-known that choosing a trial space with better approximation capacity
in the computation of conditional expectations 
may not lead to more accurate numerical solutions.
For example, a high-order  polynomial ansatz may lead to oscillatory solutions 
that blow up quickly for large spatial values,
and  
 neural networks with more complex structures 
in general result in more challenging optimization problems
in the regression steps
(see e.g.~\cite{gnoatto2020,ito2020}).
(3) Most existing numerical schemes for solving coupled (MV-)FBSDEs \eqref{eq:fbsde_conts_intro}
involve the Picard method, which solves for the backward components $(Y,Z)$ with a given  proxy of the forward component $X$
and then iterates
 (see \cite{delarue2006,bender2008,andrea2019,chassagneux2019}).
Unfortunately, 
sharp criteria for convergence of the Picard method are difficult to establish
since, on one hand, it is well-known that  the 
 Picard theorem only applies to the fully coupled system \eqref{eq:fbsde_conts_intro}
 with a sufficiently small maturity $T$ (see e.g.~\cite{andrea2019,chassagneux2019}),
while on the other hand,
empirical studies show that  the theoretical bound on the maturity  to ensure convergence is usually far too pessimistic 
\cite{germain2019}.

\paragraph{Our work.}
This paper consists of three parts.

\begin{itemize}[leftmargin=*]
\item
We propose   an {a posteriori} error estimator to quantify the  accuracy 
of  given numerical solutions
to  \eqref{eq:fbsde_conts_intro}.
These solutions  can be 
produced from an arbitrary time-stepping scheme,
an arbitrary numerical procedure for approximating conditional expectations
and an arbitrary discrete
approximation of  Brownian increments.
For 
a given  approximation  
 $(\hat{X}_{t_i},\hat{Y}_{t_i},\hat{Z}_{t_i})_{t_i\in \pi}$ 
on the grid $\pi=\{0=t_0<\ldots <t_N=T\}$ (generated by some algorithm), 
the   error estimator 
determines its accuracy by 
checking how well the given approximation  satisfies   \eqref{eq:fbsde_conts_intro}
running forward in time on the grid $\pi$:
\begin{align}\l{eq:error_estimator_intro}
\begin{split}
&\cE_\pi(\hat{X},\hat{Y},\hat{Z})
\\
&\coloneqq
\sE[|\hat{X}_0-\xi_0|^2]
+
\sE[|\hat{Y}_T-g(\hat{X}_T,\sP_{\hat{X}_T})|^2]
\\
&\quad +
\max_{0\le i\le N-1}
\sE
\bigg[
\bigg|
\hat{X}_{t_{i+1}}
-\hat{X}_{0}
-\sum_{j=0}^i
\left(
b(t_j,\hat{\Theta}_{t_j},\sP_{\hat{\Theta}_{t_j}})\Delta_j  +
\sigma (t_j,\hat{\Theta}_{t_j},\sP_{\hat{\Theta}_{t_j}})\, \Delta W_j
\right)
\bigg|^2
\bigg]\\
&\quad +
\max_{0\le i\le N-1}
\sE\bigg[
\bigg|
\hat{Y}_{t_{i+1}}-\hat{Y}_0
+\sum_{j=0}^{i}
\left(
f(t_{j},\hat{\Theta}_{t_j}, \sP_{\hat{\Theta}_{t_j}})\Delta_j- \hat{Z}_j\,\Delta W_j
\right)
\bigg|^2
\bigg],
\end{split}
\end{align}
where
$\hat{\Theta}_{t_i}=(\hat{X}_{t_i},\hat{Y}_{t_i},\hat{Z}_{t_i})$,
 $\Delta_i=t_{i+1}-t_i$ and $\Delta W_i=W_{t_{i+1}}-W_{t_i}$ for all $i=0,\ldots,N-1$.
The error estimator 
\eqref{eq:error_estimator_intro}
 naturally extends  the  {a posteriori} error estimator for 
 standard (decoupled)
 BSDEs in \cite{bender2013}
to  systems of fully coupled FBSDEs with mean field interaction,
and can be accurately evaluated by
plain Monte Carlo simulation;
see Section \ref{sec:numerical} for a detailed discussion
on the implementation. 
 
\item 
We prove --  under the standard monotonicity assumption  -- that
\eqref{eq:error_estimator_intro} 
yields upper and lower bounds of
 the  squared $L^2$-error between a given discrete approximation
 and the solution to an explicit forward Euler 
discretization of \eqref{eq:fbsde_conts_intro},
up to a  constant independent of the time stepsize and the given approximation
(see Theorems \ref{thm:error_efficient_discrete_exp} and \ref{thm:error_reliable_discrete_exp}).
We then show that 
the squared $L^2$-error 
between 
a  discrete approximation
and the  continuous-time solution $(X,Y,Z)$
to \eqref{eq:fbsde_conts_intro}
can be measured by \eqref{eq:error_estimator_intro}  along with 
the path regularity of $(X,Y,Z)$ 
 (see Theorem \ref{thm:a_posterior_conts}).
The path regularity term  vanishes as the time stepsize tends to zero,
and admits a first-order convergence rate under 
 certain structural  conditions.
These results 
indicate that 
numerical
solutions  with smaller residuals \eqref{eq:error_estimator_intro}
 are   more accurate,
and hence justify  the use of
 residual minimization algorithms 
(e.g., the   deep BSDE solvers in
\cite{carmona2019,  fouque2019,germain2019})
 for solving 
\eqref{eq:fbsde_conts_intro}
(see Corollary \ref{cor:deep_bsde}).

 \item
 We  finally verify the theoretical properties of the   {a posteriori}  estimator
through several numerical experiments.
Section \ref{sec:one_dimensional} studies a one-dimensional 
 coupled MV-FBSDE arising from 
a mean field game, 
for which  a hybrid scheme  consisting of 
the Markovian  iteration in \cite{bender2008}
and the least-squares Monte Carlo methods  in \cite{gobet2006}
is implemented 
to generate  numerical solutions.
We show that the  estimator  accurately  predicts  the  squared approximation errors 
for different choices of model parameters and discretization parameters,
no matter whether the hybrid scheme converges.
The error estimator \eqref{eq:error_estimator_intro}
also leads to more efficient algorithms with tailored hyper-parameters,
such as  the number of time steps, the number of simulation paths,
{and the number of Picard iterations}.
{
Section \ref{sec:multi-dimensional} studies multidimensional coupled MV-FBSDEs arising from the  optimal control of  
Cucker--Smale models,
whose 
 numerical solutions are computed using   neural network based BSDE solvers.
The results  show that the estimator  effectively predicts the true  
approximation error   and is robust with respect to  
 model parameters. 
}
\end{itemize}

\paragraph{Our approach and related works.}

{A posteriori} error analysis 
has  been performed in \cite{bender2013,bender2017}
for decoupled BSDEs 
(where  \eqref{eq:fbsde_conts_fwd_intro}
is independent of  $Y,Z$)
and  in \cite{han2018}
for weakly coupled FBSDEs 
(where  \eqref{eq:fbsde_conts_fwd_intro}
is independent of $Z$).
To the best of our knowledge,
 this is the first 
 {a posteriori} error estimator
 with rigorous error estimates 
 for  fully coupled (MV-)FBSDEs.
Moreover,
instead of 
merely estimating
the  accuracy 
at $t=0$
 as in \cite{bender2017}, 
 the estimator 
\eqref{eq:error_estimator_intro}  
yields upper and lower bounds for the global $L^2$-error of a given discrete approximation
 $(\hat{X},\hat{Y},\hat{Z})$
over the grid.
This subsequently allows for  measuring
 the accuracy of the  numerical Nash equilibria and optimal control strategies
 (see e.g.~\cite{andrea2019,carmona2019,chassagneux2019,germain2019})
 or the dynamic risk measures  \cite{gnoatto2020} computed over the whole interval.

A crucial step in analyzing  \eqref{eq:error_estimator_intro}  
 is to establish the well-posedness and stability of  
  a family of  coupled discrete-time MV-FBSDEs 
  (referred to as MV-FBS$\Delta$E) arising from 
discretizing   \eqref{eq:fbsde_conts_intro}  with a  forward Euler scheme. 
There are two main challenges in analyzing these discrete-time equations 
beyond those encountered in a continuous-time setting  \cite{peng1999,bensoussan2015}:
\begin{itemize}[leftmargin=*]
\item 
Adapting   the method of continuation  to coupled  MV-FBS$\Delta$Es
involves estimating  the   product
of   forward and backward processes on $[0,T]$,
which subsequently requires  controlling 
  the product of drift coefficients on each subinterval. 
  Note that such a  term   only appears in the  discrete-time setting,
  and  cannot be   controlled by the  monotonicity condition as in \cite{peng1999,bensoussan2015}.
 Here, we exploit
 a precise a priori
  estimate of the MV-FBS$\Delta$Es,
 and prove  that
the additional term  is of  magnitude $\cO(\max_i \Delta_i)$.
This allows for implementing  the  continuation method 
and subsequently concluding the desired well-posedness and stability of the  MV-FBS$\Delta$E
for all sufficiently fine grids
(see Section \ref{sec:fbsde}).

\item 
The error estimates allow
for numerical solutions generated from 
an arbitrary
 discrete approximation of Brownian increments
 and an arbitrary time-stepping scheme.
 This requires  establishing
the well-posedness  and stability of the forward Euler scheme 
  in a general setting by  allowing the driving noise to be a general discrete-time martingale,
  and by allowing the perturbation to be a general square-integrable process. 
 As   discrete-time martingales 
in general do not enjoy the predictable representation property,
the associated  MV-FBSDEs are not well-posed in terms of a solution triple $(X,Y,Z)$
(cf.~\eqref{eq:fbsde_conts_intro}).
Here we augment the solution with an additional martingale process that is 
strongly
orthogonal to the given discrete-time martingale,
and
construct  adapted solutions to MV-FBSDEs
based on the  Kunita--Watanabe decomposition.

\end{itemize}


\paragraph{Notation.}

Let  $T>0$ be  a given  terminal time
and  $(\Om, \cF,  \sP)$ be
 a given complete probability space
equipped with a complete and right-continuous filtration $\sF=\{\cF_t\}_{t\in [0, T]}$.
The filtration $\sF$ 
 is in general larger than 
the augmented filtration generated by the   driving noise of the system (i.e., the martingale $W$ in \eqref{eq:MV-fbsde_exp}),
and contains the information of all independently simulated sample paths of the  driving noise
that are used to obtain the numerical solutions. 
 All equalities and inequalities 
on a vector/matrix quantity 
are understood
componentwise in $\sP$-almost surely sense.

 For each
$N\in \sN$,
let 
$\cN=\{0,1,\ldots,N \}$
and
$\cN_{<N}=\{0,1,\ldots,N-1 \}$.
We  denote by
$\pi_N=\{t_i \}_{i\in \cN}$
 a uniform  partition of $[0,T]$
such that for all $i\in \cN$,
 $t_i=i\tau_N$
with the time stepsize $\tau_N=T/N$,\footnotemark
 by
$\sE_i[\cdot]$ the conditional expectation 
$\sE[\cdot\mid \cF_{t_i}]$
for $i\in\cN$,
and by $\Delta$ the difference operator 
such that
$\Delta U_{i} = U_{t_{i+1}}- U_{t_i}$
  for all $i\in \cN_{<N}$ and  
 processes $(U_t)_{0\le t\le T}$.
For simplicity, 
for each   $i\in \cN$
and  process $(U_t)_{0\le t\le T}$,
we   write $U_i=U_{t_i}$
if no confusion  occurs.

\footnotetext{In this paper, we work with a
uniform partition of $[0,T]$ to simplify the notation and to keep the focus on the main issues,
but similar results are valid for  nonuniform time-steps as well.}

For each  $n\in \sN$, we  denote by 
  $\sI_n$  the $n\t n$ identity matrix.
We  denote by $\la \cdot,\cdot\ra$
the usual inner product in a given Euclidean space
and by   $|\cdot|$ the norm induced by $\la \cdot,\cdot\ra$,
which in particular satisfy  for all 
$n,m,d\in \sN$
and
$\theta_1=(x_1,y_1,z_1),\theta_2=(x_2,y_2,z_2)\in \sR^n\t \sR^m\t \sR^{m\t d}$,
$\la z_1,z_2\ra =\tr(z^*_1z_2)$
and 
$\la \theta_1,\theta_2\ra =\la x_1,x_2\ra+\la y_1,y_2\ra+\la z_1,z_2\ra$,
where 
$\tr(\cdot)$ 
and
$(\cdot)^*$ denote the     trace and the transposition  of a matrix, respectively.

For each 
 $n,n'\in \sN$
 and $\sigma$-algebra $\cG\subset \cF$,
we 
 introduce the following spaces:
$L^2(\cG; \sR^n)$ is the space  of 
all $\cG$-measurable
$\sR^n$-valued 
square integrable random variables;
 $\cM^2(0, T; \sR^{n})$ is the space of all $\sF$-adapted 
 $\sR^n$-valued 
square integrable process;
$\cP_2(\sR^{n})$ is the set of 
square integrable
probability measures on $\sR^{n}$ 
endowed with the  2-Wasserstein distance defined by 
$$
\cW_2(\mu_1,\mu_2)
\coloneqq \inf_{\nu\in \Pi(\mu_1,\mu_2)} \left(\int_{\sR^{n}\t \sR^{n}}|x-y|^2\nu(dx,dy)\right)^{1/2},
\q \mu_1,\mu_2\in \cP_2(\sR^{n}),
$$
where $\Pi(\mu_1,\mu_2)$ is the set of all couplings of $\mu_1$ and $\mu_2$, i.e.,
$\nu\in \Pi(\mu_1,\mu_2)$ is a probability measure on $\sR^{n}\t \sR^{n}$ such that $\nu(\cdot\t \sR^{n})=\mu_1$ 
and $\nu(\sR^{n}\t \cdot)=\mu_2$.
Note that 
for all $n\in \sN, \mu_1,\mu_2\in \cP_2(\sR^{n})$,
$\cW^2_2(\mu_1,\mu_2)\le \sE[|X_1-X_2|^2]$,
 where $X_1$ and $X_2$ are $n$-dimensional random vectors having the distributions 
 $\mu_1$ and $\mu_2$, respectively.

\section{Well-posedness and stability of discrete MV-FBSDEs}\l{sec:fbsde}

This section  studies the  MV-FBS$\Delta$E
associated with the {a posteriori} error estimator 
\eqref{eq:error_estimator_intro}.
We  prove that the MV-\fbsde admits a unique adapted solution and 
establish an {a priori} stability estimate of  its solution with respect to the perturbation of coefficients.

For each $N\in\sN$,   consider the following MV-\fbsde
 on the time grid $\pi_N$: 
for all $i\in \cN_{<N}$,
\begin{subequations}\label{eq:MV-fbsde_exp}
\begin{align}
\Delta X^\pi_i&=b(t_{i},X^\pi_{i},Y^\pi_{i},Z^\pi_{i},\sP_{(X^\pi_{i},Y^\pi_{i},Z^\pi_{i})})\tau_N  
+\sigma (t_i,X^\pi_i,Y^\pi_i,Z^\pi_i,\sP_{(X^\pi_i,Y^\pi_i,Z^\pi_i)})\, \Delta W_i, 
\l{eq:fbsde_fwd_exp}\\
\Delta Y^\pi_i&=-f(t_{i},X^\pi_{i},Y^\pi_{i},Z^\pi_{i}, \sP_{(X^\pi_{i},Y^\pi_{i},Z^\pi_{i})})\tau_N+Z^\pi_i\,\Delta W_i+\Delta M^\pi_i,
\l{eq:fbsde_bwd_exp}\\
X^\pi_0&=\xi_0,\q Y_N=g(X^\pi_N,\sP_{X^\pi_N}),
\l{eq:fbsde_terminal_exp}
\end{align}
\end{subequations}
 where
 $\xi_0\in L^2(\cF_0;\sR^n)$,
 the solution processes 
 $X^\pi$, $Y^\pi$, $Z^\pi$ and $M^\pi$ take  values  in $\sR^n$, $\sR^m$, $\sR^{m\t d}$ and $\sR^m$, respectively,  
 the coefficients $(b, \sigma, f, g)$, referred as the \textit{generator} of the MV-\fbsde \eqref{eq:MV-fbsde_exp}, are (possibly random) functions with appropriate dimensions (see (H.\ref{assum:fbsde_discrete_exp}) for the precise conditions),
 and 
  $W=(W_t)_{t\in [0,T]}\in \cM^2(0, T; \sR^{d})$ is a given 
  (possibly piecewise-constant)
 martingale
process
satisfying for all 
$i\in \cN_{<N}$,
$\sE_i[\Delta W_i (\Delta W_i)^*]=\tau_N\sI_d$.
Above and hereafter, when there is no ambiguity, we will omit the dependence of $(b, \sigma, f, g)$ on $\om\in \Om$ for notational simplicity.

\begin{Remark}\l{rmk:predictable}
Both the $Z^\pi$ and  $M^\pi$ processes in \eqref{eq:MV-fbsde_exp} arise from applying the martingale representation theorem 
to obtain an $\sF$-adapted solution to
 \eqref{eq:MV-fbsde_exp}. 
 Note that
 we allow \eqref{eq:MV-fbsde_exp} to be driven by a general discrete martingale $W$,
 which represents the discrete
approximation of  Brownian increments
that are used to generate numerical solutions
(such as those  based on Gauss-Hermite quadrature formula
as  in \cite{picarelli2020}).
It is well-known that 
   martingale processes with jumps, in particular 
the discrete-time martingale $( W_i)_{i\in \cN}$,
in general do not enjoy the predictable representation property,
i.e.,  
for 
a given  martingale  $U\in \cM^2(0,T;\sR^m)$, there may not exist a  process $Z$ satisfying
$\Delta U_i=Z_i\Delta W_i$ and $Z_i\in L^2(\cF_{t_i};\sR^m)$ for all $i\in \cN_{<N}$.
Hence we augment the solution with another martingale process $M$ (see Definition \ref{def:MV-fbsde})
and  apply  Kunita--Watanabe decomposition (\cite[Theorem 10.18]{follmer2004})
   to construct  adapted solutions to
    \eqref{eq:MV-fbsde_exp}; 
    see Lemma \ref{lem:linear_existence} and  also \cite{bender2013,bielecki2015}.

In the case that 
 $W$ has the predictable representation property,
 such as
 Bernoulli processes with independent increments,
and $\sF$ is
 the augmented filtration
generated by $(W_t)_{t\in [0,T]}$ and an independent initial $\sigma$-field $\cF_0$,
then 
$M=0 $ on $[0,T]$ due to  the uniqueness of  the Kunita--Watanabe  decomposition.

\end{Remark}

Throughout this work,
we shall perform the analysis under the following  assumptions on the generator $(b,\sigma,f,g)$.

\begin{Assumption}\l{assum:fbsde_discrete_exp}
Let $n,m,d\in \sN$,
$T>0$,
and let
$b:\Om\t [0,T]\t \sR^n\t \sR^m\t \sR^{m\t d}\t \cP_2(\sR^{n+m+md})\to \sR^n $,
$\sigma:\Om\t [0,T] \t \sR^n\t \sR^m\t \sR^{m\t d}\t \cP_2(\sR^{n+m+md})\to \sR^{n\t d} $,
$f:\Om\t [0,T]\t \sR^n\t \sR^m\t \sR^{m\t d}\t \cP_2(\sR^{n+m+md})\to \sR^{m} $
and 
$g:\Om\t \sR^n\t \cP_2(\sR^{n})\to \sR^{m}$
be measurable functions.
\begin{enumerate}[(1)]
\item{(Monotonicity.)}\l{item:monotone_exp}
There exists 
a full-rank matrix $G\in \sR^{m\t n}$
and
constants
 $\a\ge 0$,
  $\beta_1,\beta_2\ge 0$ 
with 
$\a+\beta_1>0$ and
$\beta_1+\beta_2>0$
such that
$\beta_1>0$ (resp.~$\a>0,\beta_2>0$) 
 when $m<n$ (resp.~$m>n$),
and
it holds
{
for $\sP$-a.s.~$\omega\in \Om$, }
all $t\in [0,T]$, $i\in \{1,2\}$, 
 $\Theta_i\coloneqq (X_i,Y_i,Z_i)\in L^2(\cF; \sR^n\t \sR^m\t \sR^{m\t d})$,
\begin{align}
&\sE[\la b(t,\Theta_1,\sP_{\Theta_1})-b(t,\Theta_2,\sP_{\Theta_2}), G^*(\delta Y)\ra]
+\sE[\la \sigma(t,\Theta_1,\sP_{\Theta_1})-\sigma(t,\Theta_2,\sP_{\Theta_2}), G^*(\delta  Z)\ra]
\nb
\\
&\quad + \sE[\la -f(t,\Theta_1,\sP_{\Theta_1})+f(t,\Theta_2,\sP_{\Theta_2}), G(\delta X)\ra]
\nb
\\
&\quad \le -\beta_1(\sE[|G^*(\delta Y)|^2]+\sE[|G^*(\delta Z)|^2]) -\beta_2\sE[|G(\delta X)|^2],
\label{eq:monotone_b_sigma_f}
\\
&\sE[\la g(X_1,\sP_{X_1})-g(X_2,\sP_{X_2}), G(\delta X)\ra]
\ge  \a\,\sE[|G(\delta X)|^2],
\nb
\end{align}
with $(\delta X,\delta Y,\delta Z) \coloneqq (X_1-X_2,Y_1-Y_2,Z_1-Z_2)$.

%

 \item{(Lipschitz continuity.)}\l{item:lipschitz_exp}
There exists a constant $L\ge 0$ such that
{
 for $\sP$-a.s.~$\omega\in \Om$, }
 all $t\in [0,T]$,
$i\in \{1,2\}$, $\theta_i\coloneqq (x_i,y_i,z_i)\in  \sR^n\t \sR^m\t \sR^{m\t d}$,
 $\mu_i\in \cP_2(\sR^{n+m+md})$
and  $\nu_i\in \cP_2(\sR^n)$,
\begin{align*}
|\phi(t,\theta_1,\mu_1)-\phi(t,\theta_2,\mu_2)|
&\le L(|\theta_1-\theta_2|+\cW_2(\mu_1,\mu_2))
\q \forall \phi=b,\sigma,f,
\\
|g(x_1,\nu_1)-g(x_2,\nu_2)|
&\le L(|x_1-x_2|+\cW_2(\nu_1,\nu_2)).
\end{align*}

 \item {(Integrability.)}\l{item:integrable_exp}
 $\xi_0\in L^2(\cF_0;\sR^n)$,
 $b(\cdot,\cdot,0,\delta_0)\in \cM^2(0, T; \sR^{n})$,
  $\sigma(\cdot,\cdot,0,\delta_0)\in \cM^2(0, T; \sR^{n\t d})$,
 $f(\cdot,\cdot,0,\delta_0)\in \cM^2(0, T; \sR^{m})$
 and $g(\cdot,0,\delta_0)\in L^2(\cF_T;\sR^m)$,
 where 
 $\delta_0\in \cP_2(\sR^n\t \sR^m\t \sR^{m\t d})$
 is
  the Dirac measure supported at $0$.
 \end{enumerate}
\end{Assumption}

\begin{Remark}

Assumption (H.\ref{assum:fbsde_discrete_exp}) is the same as Assumption (A.1) in \cite{bensoussan2015},
which 
has also been imposed in   \cite{peng1999} for  coupled FBSDEs
without mean-field interaction.
It allows for proving the  \emph{stability}   of 
(MV-)FBSDEs
with respect to perturbations in   coefficients (see Proposition \ref{thm:stab_exp}),
which subsequently yields the 
well-posedness of  
fully coupled (MV-)FBSDEs with an arbitrary terminal time $T$.
This assumption 
can be naturally satisfied by linear MV-FBSDEs which arise from 
applying 
the stochastic maximum principle approach
to solve linear-quadratic stochastic control problems and mean field games,
where the monotonicity of the generator  is inherited  from the concavity of the Hamiltonian 
(see e.g.~\cite{peng1999,bensoussan2015} for more details).
The matrix $G\in \sR^{m\t n}$ in  (H.\ref{assum:fbsde_discrete_exp}\ref{item:monotone_exp})
not only 
matches the dimensions of the processes $X$ and $Y$
in the monotonicity condition,
but also helps to handle the indefiniteness of Hamiltonian systems
arising from zero-sum differential games
(see e.g.~Example 3.4 in \cite{peng1999}).

{
It is worth noting that the  stability and well-posedness of continuous-time MV-FBSDE 
\eqref{eq:fbsde_conts_intro}
 can be established by relaxing  Assumption (H.\ref{assum:fbsde_discrete_exp}\ref{item:monotone_exp})
 with a generalised monotonicity condition.
 This condition  
  replaces  the term $\sE[|G^*(\delta Y)|^2]+\sE[|G^*(\delta Z)|^2]$ 
in \eqref{eq:monotone_b_sigma_f}
  by a term  $\phi(t, \Theta_1,\Theta_2)\in [0,\infty)$.
The generalised monotonicity condition has been verified for nonlinear (MV-)FBSDEs arising from linear-convex control problems in \cite[Lemma 2.3]{guo2023reinforcement},  and \cite[Proposition 3.3]{reisinger2020optimal}
(see also \cite{carmona2015}). 
We anticipate that under this   condition, one can establish the stability of the discrete-time FBSDE  \eqref{eq:MV-fbsde_exp}
 and carry out a similar a-posterior error analysis. A complete analysis in this direction is left for future research. 
}

\end{Remark}
%

We now   state the precise definition of a solution to  MV-\fbsde \eqref{eq:MV-fbsde_exp}. 
\begin{Definition}\l{def:MV-fbsde}
For  each $N\in\sN$,
let $\cS_N$ be the space  of 
all $4$-tuples 
$(X,Y, Z, M)\in \cM^2(0, T ;\sR^n\t \sR^m\t \sR^{m\t d}\t \sR^m)$
defined on $\pi_N$,
which 
are  constant on the intervals $[t_i,t_{i+1})$ for $i\in \cN_{<N}$,
and 
satisfy
the conditions that 
 $M_0=0$ and  $M$ is a martingale process strongly
orthogonal to $W$,\footnotemark
and let $\cS^0_N$ be the subspace of 
$(X,Y, Z, M)\in \cS_N$ for which $M\equiv 0$.

Then
for each $N\in \sN$,
 we say a $4$-tuple 
$(X,Y, Z, M)\in 
\cS_N$
is a solution to MV-\fbsde \eqref{eq:MV-fbsde_exp} 
defined on  $\pi_N$
if it satisfies  the system \eqref{eq:MV-fbsde_exp}.
We say a triple 
$(X,Y, Z)\in  \cS^0_N$
is a solution to MV-\fbsde \eqref{eq:MV-fbsde_exp} 
defined on  $\pi_N$
if 
$(X,Y, Z, 0)\in  \cS_N$
is a solution.
\footnotetext{
We say that a 
$\sR^m$-valued
martingale process $M$ is strongly orthogonal to 
$W$ if 
the process $(M_tW^*_t)_{0\le t\le T}$ is a martingale.
}
\end{Definition}

To establish that \eqref{eq:MV-fbsde_exp} admits a unique solution in $\cS_N$,
we  adapt   the  continuation argument in \cite{peng1999,bensoussan2015} to the present discrete-time setting.
To this end, 
we 
consider a family of MV-FBS$\Delta$Es on the grid $\pi_N$ parameterized by $\lambda\in [0,1]$: 
for all $i\in \cN_{<N}$,
\begin{align}\label{eq:MV-fbsde_exp_lambda}
\begin{split}
\Delta X_i&=
[(1-\lambda)\b_1 (-G^*Y_{i})+\lambda b(t_{i},\Theta_{i},\sP_{\Theta_{i}})+\phi_{i}]\tau_N  
\\
&\quad
+[(1-\lambda)\b_1 (-G^*Z_{i})+\lambda \sigma(t_{i},\Theta_{i},\sP_{\Theta_{i}})+\psi_{i}] \, \Delta W_i, 
\\
\Delta Y_i&=-[(1-\lambda)\b_2GX_i+\lambda f(t_{i},\Theta_{i}, \sP_{\Theta_{i}})+\gamma_i]\tau_N+Z_i\,\Delta W_i+\Delta M_i,
\\
X_0&=\xi_0,\q Y_N=(1-\lambda)GX_N+\lambda g(X_N,\sP_{X_N})+\eta,
\end{split}
\end{align}
where
 $G\in \sR^{m\t n}, \b_1,\b_2\ge 0$ are given in (H.\ref{assum:fbsde_discrete_exp}),
 $\Theta_i=(X_i,Y_i,Z_i)$ for all $i\in \cN$, 
$(\phi,\psi,\gamma)\in \cM^2(0,T; \sR^n\t \sR^{n\t d}\t \sR^m)$ are given processes,
and $\eta\in L^2(\cF_T;\sR^m)$ is a given random variable.
It is clear that the well-posedness of \eqref{eq:MV-fbsde_exp_lambda} with $\lambda=1$ 
implies that of  \eqref{eq:MV-fbsde_exp}.

We first establish a stability result of solutions to  
\eqref{eq:MV-fbsde_exp_lambda}
under  (H.\ref{assum:fbsde_discrete_exp}),
which
 extends \cite[Theorem 5]{bensoussan2015} 
to the present  setting
with  a general discrete-time martingale $W$.
Applying the following proposition  with different choices of 
$\lambda$, $ (\phi,\psi,\gamma,\eta)$,
  $(\bar{b},\bar{\sigma},\bar{f},\bar{g})$
 and $(\bar{\phi},\bar{\psi},\bar{\gamma},\bar{\eta},\bar{\xi})$
 allows us to 
establish
 the well-posedness of \eqref{eq:MV-fbsde_exp_lambda}
 via the method of continuation 
 and to prove the desired {a posteriori} error estimate for \eqref{eq:MV-fbsde_exp}
 in Section \ref{sec:a_posteriori_discrete}.

For the sake of readability,
 the detailed proof of Proposition \ref{thm:stab_exp}
is  given 
in Appendix \ref{appendix:proof_section_discretebsde},
as it involves several technical and lengthy calculations.

\begin{Proposition}\l{thm:stab_exp}
Suppose the generator $(b,\sigma,f,g)$ satisfies (H.\ref{assum:fbsde_discrete_exp}), 
and let $\b_1,\b_2$ and $G$ be the constants in  (H.\ref{assum:fbsde_discrete_exp}\ref{item:monotone_exp}).
Then there exists $N_0\in \sN$  and $C>0$ such that,
 for all $N\in \sN\cap[ N_0,\infty)$, $\lambda_0\in [0,1]$,
all 4-tuples
$(X,Y, Z, M)\in 
\cS_N$
satisfying \eqref{eq:MV-fbsde_exp_lambda}
defined on  $\pi_N$ 
with 
$\lambda=\lambda_0$,
 generator $(b,\sigma,f,g)$ and some 
$(\phi,\psi,\gamma)\in \cM^2(0,T; \sR^n\t \sR^{n\t d}\t \sR^m)$, 
 $\eta\in L^2(\cF_T;\sR^m)$,
 $\xi_0\in L^2(\cF_0;\sR^n)$,
and 
all 4-tuples
$(\bar{X},\bar{Y}, \bar{Z}, \bar{M})\in 
\cS_N$
satisfying  \eqref{eq:MV-fbsde_exp_lambda}
defined on  $\pi_N$ 
with 
$\lambda=\lambda_0$,
another generator $(\bar{b},\bar{\sigma},\bar{f},\bar{g})$ 
satisfying 
(H.\ref{assum:fbsde_discrete_exp}\ref{item:integrable_exp}),
 and some 
$(\bar{\phi},\bar{\psi},\bar{\gamma})\in \cM^2(0,T; \sR^n\t \sR^{n\t d}\t \sR^m)$, 
 $\bar{\eta}\in L^2(\cF_T;\sR^m)$,
  $\bar{\xi}_0\in L^2(\cF_0;\sR^m)$,
\begin{align*}
\begin{split}
&\max_{i\in \cN}
\left(
\sE[|{X}_{i}
-\bar{X}_{i}|^2]
+
\sE[|{Y}_{i}
-\bar{Y}_{i}|^2]
\right)
+
\sum_{i=0}^{N-1}
\sE
[
|{Z}_{i}
-\bar{Z}_{i}|^2]\tau_N
+
\sE[|{M}_{N}
-\bar{M}_{N}|^2]
\\
&\le
C\bigg\{
\sE[| \xi_{0}-\bar{\xi}_0|^2]
+
\sE[ |\lambda_0(g(\bar{X}_N,\sP_{\bar{X}_N})-\bar{g}(\bar{X}_N,\sP_{\bar{X}_N}))
+ \eta-\bar{\eta}|^2]
\\
&\quad
+\sum_{i=0}^{N-1}
\bigg(
\sE[|\lambda_0({f}(t_{i},\bar{\Theta}_i,\sP_{\bar{\Theta}_i})-\bar{f}(t_{i},\bar{\Theta}_i,\sP_{\bar{\Theta}_i}))+\gamma_i-\bar{\gamma}_i|^2]\tau_N
\\
&\quad
+
\sE[|\lambda_0({b}(t_{i},\bar{\Theta}_{i},\sP_{\bar{\Theta}_{i}})-\bar{b}(t_{i},\bar{\Theta}_{i},\sP_{\bar{\Theta}_{i}})+ \phi_{i}- \bar{\phi}_{i}|^2]\tau_N
\\
&
\quad 
+
\sE [|\lambda_0({\sigma}(t_i,\bar{\Theta}_i,\sP_{\bar{\Theta}_i})-\bar{\sigma}(t_i,\bar{\Theta}_i,\sP_{\bar{\Theta}_i}))+\psi_{i}-\bar{\psi}_{i}|^2]\tau_N
\bigg)
\bigg\},
\end{split}
\end{align*}
where $\bar{\Theta}_i\coloneqq (\bar{X}_i,\bar{Y}_i, \bar{Z}_i)$ for all $i\in \cN_{<N}$.

\end{Proposition}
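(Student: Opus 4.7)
The plan is to adapt the continuation argument of Peng--Wu and Bensoussan--Yam--Zhang to the discrete-time setting, while carefully isolating the new $\mathcal{O}(\tau_N)$ cross terms that have no continuous-time analogue. Writing $\delta X_i := X_i - \bar X_i$, and similarly for $\delta Y,\delta Z,\delta M$, and denoting by $\delta\phi_i,\delta\psi_i,\delta\gamma_i,\delta\eta,\delta\xi_0$ the data residuals obtained by subtracting the two instances of \eqref{eq:MV-fbsde_exp_lambda} (with the generator-differences $\lambda_0(b - \bar b)$ etc.\ absorbed into $\delta\phi$, $\delta\psi$, $\delta\gamma$, $\delta\eta$), the core step is to telescope the identity
$$
\Delta \langle G\delta X_i,\delta Y_i\rangle = \langle G\Delta\delta X_i,\delta Y_i\rangle + \langle G\delta X_i,\Delta\delta Y_i\rangle + \langle G\Delta\delta X_i,\Delta\delta Y_i\rangle
$$
from $i=0$ to $N-1$ and take expectations. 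This plays the role of a discrete It\^o formula and links the boundary value at $i=N$ to a running sum of generator-difference quadratic forms.

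Substituting \eqref{eq:MV-fbsde_exp_lambda} into this identity, the boundary term at $N$ combined with the terminal condition and the monotonicity of $g$ contributes $(\lambda_0\alpha + (1-\lambda_0))\,\sE[|G\delta X_N|^2]$. The summed middle two terms, upon invoking the monotonicity (H.\ref{assum:fbsde_discrete_exp}\ref{item:monotone_exp}) applied to the convex combination in \eqref{eq:MV-fbsde_exp_lambda} (and using $\cW_2^2(\sP_{\Theta},\sP_{\bar\Theta})\le \sE[|\delta\Theta|^2]$ to handle the McKean--Vlasov dependence), yield a lower bound
$$
\beta_1 \sum_{i=0}^{N-1}\bigl(\sE[|G^{*}\delta Y_i|^2] + \sE[|G^{*}\delta Z_i|^2]\bigr)\tau_N + \beta_2 \sum_{i=0}^{N-1}\sE[|G\delta X_i|^2]\tau_N,
$$
modulo quadratic remainders in $(\delta\phi,\delta\psi,\delta\gamma,\delta\eta,\delta\xi_0)$ which, via Young's inequality with a small parameter, are exactly the right-hand side terms of the target estimate.

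The main obstacle is the residual cross sum $\mathcal{R}_N := \sum_{i=0}^{N-1}\sE[\langle G\Delta\delta X_i,\Delta\delta Y_i\rangle]$, which is absent in the continuous-time proof and corresponds to the ``product of drift coefficients on each subinterval'' noted in the introduction. Expanding $\Delta\delta X_i$ and $\Delta\delta Y_i$ via \eqref{eq:MV-fbsde_exp_lambda} decomposes $\mathcal{R}_N$ into (i) a drift--drift piece of size $\tau_N^2$ controlled by the Lipschitz constant $L$; (ii) mixed drift--martingale pieces whose expectations vanish by the martingale property of $W$ and the strong orthogonality of $\delta M$ to $W$; and (iii) a diffusion--diffusion piece whose conditional expectation collapses through $\sE_i[\Delta W_i(\Delta W_i)^{*}]=\tau_N\sI_d$ to a term of order $\tau_N$ in $(\delta\sigma, \delta Z)$. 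To conclude that $\mathcal{R}_N$ is negligible, I would first derive a standalone a priori $L^2$ estimate
$$
\sum_{i=0}^{N-1}\sE[|\Theta_i|^2+|\bar\Theta_i|^2]\tau_N + \max_{i\in\cN}\bigl(\sE[|X_i|^2+|Y_i|^2+|\bar X_i|^2+|\bar Y_i|^2]\bigr) + \sE[|M_N|^2+|\bar M_N|^2] \le C\cdot(\mathrm{data})
$$
for \eqref{eq:MV-fbsde_exp_lambda} via the backward equation, Young's inequality, and discrete Gronwall; combined with (H.\ref{assum:fbsde_discrete_exp}\ref{item:lipschitz_exp}) this yields $|\mathcal{R}_N|\le C\tau_N$ times the left-hand side of the target inequality plus controlled data terms. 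This forces the threshold $N_0$: once $\tau_N = T/N$ is small enough, $\mathcal{R}_N$ is absorbed into the monotonicity lower bound, producing the desired estimates on $\sum_i \sE[|\delta Y_i|^2+|\delta Z_i|^2]\tau_N$ and on $\sE[|\delta X_N|^2]$.

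To finish, I would bootstrap to the full uniform-in-time statement. The bound on $\max_i \sE[|\delta X_i|^2]$ follows from the forward equation in \eqref{eq:MV-fbsde_exp_lambda} by Cauchy--Schwarz, the Lipschitz bound, and discrete Gronwall starting from $\sE[|\delta\xi_0|^2]$; the bound on $\max_i\sE[|\delta Y_i|^2]$ follows by iterating the backward equation from $i=N$ and using Doob/martingale tricks, with the $W$-orthogonality of $\delta M$ killing cross terms; and $\sE[|\delta M_N|^2]$ is read off from the isometry
$$
\sE[|\Delta\delta Y_i + (\,\cdots)\tau_N|^2] = \sE[|\delta Z_i \Delta W_i|^2] + \sE[|\Delta\delta M_i|^2]
$$
after Kunita--Watanabe projection, summed over $i$. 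The genuinely hard point throughout is that the a priori estimate and the monotonicity step must be closed simultaneously to absorb $\mathcal{R}_N$, which is precisely why an explicit threshold $N_0$ (depending only on $L,\beta_1,\beta_2,\alpha,T,|G|$) appears in the statement.
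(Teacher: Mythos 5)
Your overall strategy coincides with the paper's: telescope the discrete product rule for $\langle G\delta X,\delta Y\rangle$, apply the monotonicity condition to the convex-combination generator, isolate the drift--drift cross term $\sum_i\sE[\langle G\Delta\delta X_i,\Delta\delta Y_i\rangle]$ of order $\tau_N^2$ as the genuinely new discrete-time obstacle, kill the mixed terms by the martingale property and the strong orthogonality of $\delta M$ to $W$, and close with forward and backward Gronwall estimates. Two points, however, need repair. First, your treatment of the cross term via a standalone a priori bound on $\sum_i\sE[|\Theta_i|^2+|\bar\Theta_i|^2]\tau_N$ is either superfluous or dangerous: the paper bounds the cross term directly by writing the drift increments as $\delta b^{\lambda_0}(t_i)+\lambda_0(\hat b(t_i)-\bar b(t_i))+\delta\phi_i$ and applying the Lipschitz condition to the \emph{difference} $\delta b^{\lambda_0}$, which gives $\sE[\Sigma_i]\le C\tau_N^2(\sE[|\delta\Theta_i|^2]+\text{difference-data terms})$. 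If instead you estimate the drifts by their absolute size and invoke an a priori bound on the solutions themselves, you obtain a remainder of the form $C\tau_N\cdot(\text{absolute data})$, which does not vanish when the two systems coincide and therefore destroys the stability (and hence uniqueness and contraction) content of the proposition. You should state explicitly that the cross term is controlled by $C\tau_N\sum_i\sE[|\delta\Theta_i|^2]\tau_N$ plus the admissible right-hand-side residuals, with no recourse to absolute-size estimates.

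Second, and more substantively, you omit the case analysis that the paper needs to close the argument. The monotonicity hypothesis only guarantees that \emph{one} of the two coercivity mechanisms is active: either $\b_1>0$ with $m\le n$ (so that $x\mapsto|G^*x|$ is a norm and the Lemma yields control of $\sum_i\sE[|\delta Y_i|^2+|\delta Z_i|^2]\tau_N$, which is then fed into the forward Gronwall estimate and afterwards into the backward one), or $\a,\b_2>0$ with $m\ge n$ (so that $x\mapsto|Gx|$ is a norm and one first controls $\sE[|\delta X_N|^2]+\sum_i\sE[|\delta X_i|^2]\tau_N$, feeding it into the backward estimate and then the forward one). Since $G\in\sR^{m\t n}$ is merely full-rank and generally rectangular, only one of $|G\cdot|$, $|G^*\cdot|$ is a norm, and one of $\b_1$ or the pair $(\a,\b_2)$ may be zero; your proposal treats both coercivity terms as simultaneously usable and fixes a single order of bootstrapping, which does not close the estimate in the case $\b_1=0$. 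Without this dichotomy the chain of substitutions between the monotonicity inequality and the two Gronwall estimates cannot be completed, so this is a genuine gap rather than a presentational omission.
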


A direct consequence of Proposition \ref{thm:stab_exp}
is the uniqueness of solutions to \eqref{eq:MV-fbsde_exp_lambda},
which can be shown by setting 
$(\bar{\phi},\bar{\psi},\bar{\gamma},\bar{\eta},\bar{\xi}_0)=(\phi,\psi,\gamma,\eta,{\xi}_0)$
and $(\bar{b},\bar{\sigma},\bar{f},\bar{g})=({b},{\sigma},{f},{g})$ 
in the statement of Proposition \ref{thm:stab_exp}.

\begin{Corollary}\l{cor:uniqueness_bwd_imp}
Suppose  (H.\ref{assum:fbsde_discrete_exp}) holds.
Then
there exists $N_0\in \sN$ such that
it holds for all  $N\in \sN\cap[N_0,\infty)$, $\lambda_0\in [0,1]$,
 $(\phi,\psi,\gamma)\in \cM^2(0,T; \sR^n\t \sR^{n\t d}\t \sR^m)$, 
 $\eta\in L^2(\cF_T;\sR^m)$,
  $\xi_0\in L^2(\cF_0;\sR^n)$
that 
 \eqref{eq:MV-fbsde_exp_lambda} with $\lambda=\lambda_0$ admits at most one solution in $\cS_N$.
\end{Corollary}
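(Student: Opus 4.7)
The plan is essentially to invoke Proposition \ref{thm:stab_exp} with carefully chosen data so that the entire right-hand side of the stability estimate vanishes, and then read off uniqueness from the resulting inequality.

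More concretely, fix $N_0\in \sN$ as furnished by Proposition \ref{thm:stab_exp}, let $N\ge N_0$, $\lambda_0\in[0,1]$, and suppose that $(X,Y,Z,M), (\bar X,\bar Y,\bar Z,\bar M)\in \cS_N$ are two solutions to \eqref{eq:MV-fbsde_exp_lambda} corresponding to the \emph{same} generator $(b,\sigma,f,g)$, the \emph{same} perturbation data $(\phi,\psi,\gamma)\in \cM^2(0,T;\sR^n\t\sR^{n\t d}\t\sR^m)$, the \emph{same} terminal perturbation $\eta\in L^2(\cF_T;\sR^m)$ and the \emph{same} initial condition $\xi_0\in L^2(\cF_0;\sR^n)$.

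I would then apply Proposition \ref{thm:stab_exp} to this pair by identifying
$(\bar b,\bar\sigma,\bar f,\bar g)=(b,\sigma,f,g)$ and $(\bar\phi,\bar\psi,\bar\gamma,\bar\eta,\bar\xi_0)=(\phi,\psi,\gamma,\eta,\xi_0)$. The second set of data clearly satisfies (H.\ref{assum:fbsde_discrete_exp}\ref{item:integrable_exp}) since it coincides with the first. Under these choices, every term on the right-hand side of the displayed estimate in Proposition \ref{thm:stab_exp} — namely the differences $g-\bar g$, $\eta-\bar\eta$, $f-\bar f$, $\gamma-\bar\gamma$, $b-\bar b$, $\phi-\bar\phi$, $\sigma-\bar\sigma$, $\psi-\bar\psi$, and $\xi_0-\bar\xi_0$ — vanishes identically.

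Consequently the left-hand side yields
\begin{equation*}
\max_{i\in\cN}\bigl(\sE[|X_i-\bar X_i|^2]+\sE[|Y_i-\bar Y_i|^2]\bigr)
+\sum_{i=0}^{N-1}\sE[|Z_i-\bar Z_i|^2]\tau_N
+\sE[|M_N-\bar M_N|^2]\le 0,
\end{equation*}
which forces $X_i=\bar X_i$, $Y_i=\bar Y_i$ for every $i\in\cN$, $Z_i=\bar Z_i$ for every $i\in\cN_{<N}$ in $\cM^2$, and $M_N=\bar M_N$; since $M_0=\bar M_0=0$ and both are martingales (and the processes are constant on intervals $[t_i,t_{i+1})$ by the definition of $\cS_N$), the martingale property gives $M_i=\bar M_i$ for every $i\in\cN$. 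This establishes the claimed uniqueness in $\cS_N$. There is essentially no genuine obstacle here, as the real work is carried by Proposition \ref{thm:stab_exp}; the only care needed is to verify that applying the stability estimate with identical data is legitimate (which it is, since both tuples lie in $\cS_N$ and solve \eqref{eq:MV-fbsde_exp_lambda} with the same $\lambda_0$) and to track that the threshold $N_0$ inherited from the proposition is the correct one in the statement of the corollary.
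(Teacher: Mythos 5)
Your proposal is correct and is exactly the paper's argument: the paper derives the corollary by applying Proposition \ref{thm:stab_exp} with $(\bar{\phi},\bar{\psi},\bar{\gamma},\bar{\eta},\bar{\xi}_0)=(\phi,\psi,\gamma,\eta,{\xi}_0)$ and $(\bar{b},\bar{\sigma},\bar{f},\bar{g})=({b},{\sigma},{f},{g})$, so the right-hand side of the stability estimate vanishes. Your additional remark that $M_N=\bar M_N$ propagates to all $i$ via the martingale property is a correct (and worthwhile) detail the paper leaves implicit.
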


We proceed to prove the existence of solutions to \eqref{eq:MV-fbsde_exp}.
The following lemma
 constructs solutions to the linear MV-\fbsde \eqref{eq:MV-fbsde_exp_lambda} with $\lambda=0$,

\begin{Lemma}\l{lem:linear_existence}
Let $\b_1,\b_2\ge 0$, $G\in \sR^{m\t n}$ be a full-rank matrix
and 
  $\xi_0\in L^2(\cF_0;\sR^n)$.
Then  it holds for all  $N\in \sN$,
$(\phi,\psi,\gamma)\in \cM^2(0,T; \sR^n\t \sR^{n\t d}\t \sR^m)$, 
 $\eta\in L^2(\cF_T;\sR^m)$
  that 
 \eqref{eq:MV-fbsde_exp_lambda} with $\lambda=0$ admits a solution in $\cS_N$.
\end{Lemma}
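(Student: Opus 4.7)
The strategy is to exploit the linearity of the $\lambda=0$ system and construct the solution explicitly by a decoupling (Riccati-type) ansatz together with the Kunita--Watanabe decomposition. When $\lambda=0$ the McKean--Vlasov terms in \eqref{eq:MV-fbsde_exp_lambda} disappear, so the system reduces to a linear FBS$\Delta$E with deterministic structural coefficients driven only by the given adapted data $(\phi,\psi,\gamma,\eta,\xi_0)$.

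I would first postulate $Y_i=P_iX_i+r_i$ with $P_i\in\sR^{m\times n}$ deterministic and $r_i\in L^2(\cF_{t_i};\sR^m)$, substitute the ansatz into the backward equation, and use the forward dynamics to expand $P_{i+1}X_{i+1}$. Matching the coefficients of $X_i$ on both sides yields the backward matrix recursion
\[
(I_m+\beta_1\tau_N P_{i+1}G^*)P_i \,=\, P_{i+1}+\beta_2\tau_N G,\qquad P_N=G,
\]
which determines $(P_i)_{i\in\cN}$ uniquely provided the matrix on the left is invertible at every step.

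With $(P_i)_{i\in\cN}$ in hand, the remaining, non-$X_i$ parts of the matched equations split into a deterministic backward recursion for $\sE_i[r_{i+1}]$ and, at each $i\in\cN_{<N}$, an identity of the form
\[
(I_m+\beta_1 P_{i+1}G^*)Z_i\,\Delta W_i+\Delta M_i \,=\, \big(r_{i+1}-\sE_ir_{i+1}\big) + \Xi_i\,\Delta W_i
\]
for an explicit $\cF_{t_i}$-measurable term $\Xi_i$. Starting from $r_N=\eta$ and working backwards, I would apply the Kunita--Watanabe decomposition (\cite[Theorem 10.18]{follmer2004}) to the $\cF_{t_{i+1}}$-martingale difference $r_{i+1}-\sE_ir_{i+1}$ to extract a unique pair $(Z_i,M_i)$ with $Z_i$ being $\cF_{t_i}$-measurable and $M_i$ strongly orthogonal to $W$ with $M_0=0$; inverting $I_m+\beta_1 P_{i+1}G^*$ then delivers $Z_i$ explicitly. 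Finally, $X_i$ is built forward in time from $X_0=\xi_0$ by inserting $Y_i=P_iX_i+r_i$ into the forward equation, which gives a purely forward linear recursion solvable step by step. Setting $Y_i:=P_iX_i+r_i$ produces a $4$-tuple $(X,Y,Z,M)\in\cS_N$, and direct substitution confirms that it satisfies \eqref{eq:MV-fbsde_exp_lambda} with $\lambda=0$.

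The main obstacle is to guarantee invertibility of $I_m+\beta_1\tau_N P_{i+1}G^*$ for every $i$ and every $N\in\sN$, since $P_{i+1}G^*$ is not symmetric in general and a direct positive semi-definiteness argument is unavailable. By Sylvester's determinant identity this reduces to invertibility of $I_n+\beta_1\tau_N G^*P_{i+1}$, and the plan is to propagate backward in time---starting from $G^*P_N=G^*G\succeq 0$---a spectral property of $G^*P_i$, for instance that all its eigenvalues have non-negative real parts, using the explicit recursion $G^*P_i=(I_n+\beta_1\tau_N G^*P_{i+1})^{-1}(G^*P_{i+1}+\beta_2\tau_N G^*G)$ obtained by left-multiplying the Riccati identity by $G^*$. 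This spectral stability is the delicate technical point; once it is in place, every step of the construction becomes a routine linear inversion plus a Kunita--Watanabe decomposition.
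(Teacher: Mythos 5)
Your overall strategy---decouple via a Riccati-type ansatz $Y_i=P_iX_i+r_i$, determine $(r_i,q_i,\Delta m_i)$ by a backward recursion combined with the Kunita--Watanabe decomposition, and then reconstruct $X$ by a forward pass---is exactly the strategy of the paper's proof. The one structural difference is that the paper first splits into the cases $n\ge m$ and $m\ge n$ and projects the system through $G^*(GG^*)^{-1}G$ (resp.\ $G(G^*G)^{-1}G^*$), so that its Riccati recursion lives on \emph{square} matrices $P_i$ which it proves by induction to be symmetric positive definite and to commute with $K=GG^*$ (resp.\ $G^*G$); you instead keep the rectangular $P_i\in\sR^{m\t n}$ with $P_N=G$ and reduce the invertibility of $\sI_m+\b_1\tau_N P_{i+1}G^*$ to that of $\sI_n+\b_1\tau_N G^*P_{i+1}$ via Sylvester's identity. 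Your route avoids the case split and the auxiliary processes $\bar X$ and $(\bar Y,\bar Z,\bar M)$, which is a genuine simplification---provided the invertibility step is secured. (Note also that when you solve for $Z_i$ you need $\sI_m+\b_1 P_{i+1}G^*$ \emph{without} the factor $\tau_N$ to be invertible, since the $Z_i\,\Delta W_i$ term in the backward equation carries no $\tau_N$; the same spectral argument covers this, but it should be stated.)

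The gap is in the inductive invariant you propose for that step. Knowing only that all eigenvalues of $G^*P_{i+1}$ have non-negative real parts does not let you conclude the same for
\[
G^*P_i=(\sI_n+\b_1\tau_N\, G^*P_{i+1})^{-1}(G^*P_{i+1}+\b_2\tau_N\, G^*G),
\]
because the right-hand side is a sum and product of matrices that need not commute, and the class of matrices with spectrum in the closed right half-plane is not closed under sums; the induction therefore does not close as stated. The fix is to strengthen the invariant: prove by backward induction that $G^*P_i$ is a rational function of $G^*G$ (equivalently, symmetric positive semi-definite and commuting with $G^*G$). This holds at $i=N$ since $G^*P_N=G^*G$, and if $G^*P_{i+1}=f_{i+1}(G^*G)$ with $f_{i+1}\ge 0$ on the non-negative spectrum of $G^*G$, then every matrix in the recursion commutes with every other and $G^*P_i=f_i(G^*G)$ with $f_i(x)=(f_{i+1}(x)+\b_2\tau_N x)/(1+\b_1\tau_N f_{i+1}(x))\ge 0$ for $x\ge 0$; in particular $\sI_n+\b_1\tau_N\,G^*P_{i+1}$ has spectrum in $[1,\infty)$ and is invertible for \emph{every} $N$ (as required by the lemma, which is not restricted to large $N$). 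This commutation device is precisely the one the paper uses for its square Riccati matrices. With that replacement, the remainder of your construction---the conditional-expectation recursion for $r_i$, the Kunita--Watanabe step applied to $r_{i+1}-\sE_i[r_{i+1}]$, and the forward pass for $X$---goes through.
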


The proof is given in 
Appendix \ref{appendix:proof_section_discretebsde}.
Compared to  \cite[Lemma 2.5]{peng1999},
 the analysis of discrete-time equations 
has two main   difficulties: 
(1) In contrast to  linear FBSDEs,
 solutions to \fbsde are  constant on each subinterval,
and hence cannot be obtained based on
  differential Riccati
equations.
Here we reduce  the linear MV-\fbsde
into a class of semi-implicit time-discretized  Riccati equations,
and 
prove 
these equations have 
symmetric positive definite solutions via induction; 
(2) 
Due to the lack of predictable representation property of 
the discrete-time martingale $W$ 
(see  Remark \ref{rmk:predictable}),
it is essential  
to augment the  solution with an additional  martingale process $M$
as in Definition \ref{def:MV-fbsde},
whose  existence is achieved by the   Kunita--Watanabe decomposition.

The following proposition extends  the well-posedness of 
\eqref{eq:MV-fbsde_exp_lambda} with $\lambda=\lambda_0$
to 
that of 
\eqref{eq:MV-fbsde_exp_lambda} with $\lambda\in [\lambda_0,\lambda_0+c]$,
for some $c>0$, independent of $\lambda_0$.

\begin{Proposition}\l{prop:moc_bwd_imp}
Suppose  (H.\ref{assum:fbsde_discrete_exp}) holds,
 let $\b_1,\b_2$ and $G$ be the constants in  (H.\ref{assum:fbsde_discrete_exp}\ref{item:monotone_exp}),
  $N_0\in \sN$ be  the  natural number in Proposition \ref{thm:stab_exp}
  and $N\in\sN\cap[ N_0,\infty)$.
Assume further that there exists $\lambda_0\in [0,1)$ satisfying
for any given 
$(\bar{\phi},\bar{\psi},\bar{\gamma})\in \cM^2(0,T; \sR^n\t \sR^{n\t d}\t \sR^m)$ and  $\bar{\eta}\in L^2(\cF_T;\sR^m)$
that \eqref{eq:MV-fbsde_exp_lambda} with $\lambda=\lambda_0$
and $(\phi,\psi,\gamma,\eta)=(\bar{\phi},\bar{\psi},\bar{\gamma},\bar{\eta})$
 admits a unique solution in $\cS_N$.
Then there exists $c_0\in (0,1)$, depending only on the constants 
$T,L,G,\a,\b_1,\b_2$
in (H.\ref{assum:fbsde_discrete_exp}),
such that
it holds 
 for all $\tilde{\lambda}\in [\lambda_0,\lambda_0+c_0]\cap [0,1]$,
$(\bar{\phi},\bar{\psi},\bar{\gamma})\in \cM^2(0,T; \sR^n\t \sR^{n\t d}\t \sR^m)$ and  $\bar{\eta}\in L^2(\cF_T;\sR^m)$
that
 \eqref{eq:MV-fbsde_exp_lambda} 
 with $\lambda=\tilde{\lambda}$
and $(\phi,\psi,\gamma,\eta)=(\bar{\phi},\bar{\psi},\bar{\gamma},\bar{\eta})$
 admits a unique solution in $\cS_N$.
\end{Proposition}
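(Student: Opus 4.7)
The plan is to implement the method of continuation by a Banach fixed-point iteration that absorbs the extra $(\tilde{\lambda}-\lambda_0)$-portion of the generator into the inhomogeneous data of the already-solvable $\lambda_0$-equation. Write $\tilde\lambda = \lambda_0 + \delta$ with $\delta \in [0, c_0]$ to be chosen. Observe that the $\tilde\lambda$-drift rewrites as
$$(1-\lambda_0)\b_1(-G^*Y_i) + \lambda_0 b(t_i,\Theta_i,\sP_{\Theta_i}) + \hat{\phi}_i,\qquad \hat{\phi}_i := \phi_i + \delta\bigl[b(t_i,\Theta_i,\sP_{\Theta_i}) + \b_1 G^*Y_i\bigr],$$
and the diffusion, generator, and terminal condition admit analogous decompositions, with corresponding $\delta$-corrections $\hat\psi_i, \hat\gamma_i, \hat\eta$. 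For a proxy $\Theta' = (X',Y',Z') \in \cM^2(0,T;\sR^n\t\sR^m\t\sR^{m\t d})$ defined on $\pi_N$, I define $\Phi(\Theta')$ to be the unique $\cS_N$-solution, granted by the hypothesis at $\lambda=\lambda_0$, of \eqref{eq:MV-fbsde_exp_lambda} with $\lambda=\lambda_0$ and with the above $\delta$-perturbations frozen at $\Theta'$ rather than at $\Theta$. Any fixed point of $\Phi$ is a solution of \eqref{eq:MV-fbsde_exp_lambda} at $\tilde\lambda$, and uniqueness is already granted by Corollary \ref{cor:uniqueness_bwd_imp}.

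The contraction step applies Proposition \ref{thm:stab_exp} to two runs of $\Phi$ with inputs $\Theta'_1$ and $\Theta'_2$: both solve the $\lambda_0$-equation with the same generator $(b,\sigma,f,g)$ but with inhomogeneities differing only through the $\delta$-corrections evaluated at the respective proxies. Using (H.\ref{assum:fbsde_discrete_exp}\ref{item:lipschitz_exp}) together with the bound $\cW_2^2(\sP_{\Theta'_{1,i}}, \sP_{\Theta'_{2,i}}) \le \sE[|\Theta'_{1,i} - \Theta'_{2,i}|^2]$ recorded at the end of the Notation section, the perturbation terms satisfy $\sE[|\hat\phi^1_i - \hat\phi^2_i|^2] \le C'\delta^2\,\sE[|\Theta'_{1,i} - \Theta'_{2,i}|^2]$ and likewise for $\hat\psi,\hat\gamma,\hat\eta$, with $C'$ depending only on $L,G,\b_1,\b_2$. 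Substituting into the stability estimate of Proposition \ref{thm:stab_exp} gives
$$\max_{i\in\cN}\sE\bigl[|\Phi(\Theta'_1)_i - \Phi(\Theta'_2)_i|^2\bigr] + \sum_{i=0}^{N-1}\sE[|Z_{1,i} - Z_{2,i}|^2]\tau_N \le C\delta^2\,\|\Theta'_1 - \Theta'_2\|^2,$$
where $\|\cdot\|^2$ is the natural Hilbertian norm on the proxy space and $C$ depends only on $T,L,G,\a,\b_1,\b_2$. Choosing $c_0$ so small that $Cc_0^2 < 1$ makes $\Phi$ a strict contraction on this complete space, and Banach's fixed-point theorem produces the required unique solution in $\cS_N$.

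The hard part, and the reason the statement is phrased carefully, is ensuring that $c_0$ is genuinely independent of $\lambda_0$. This reduces entirely to the $\lambda_0$-uniformity of the constant $C$ in Proposition \ref{thm:stab_exp}, which is the precise content of that proposition: its proof combines the monotonicity of $(b,\sigma,f,g)$ with the dissipativity of the linear $\b_1,\b_2$-part, and since these enter convexly through the factor $(1-\lambda_0)$ against $\lambda_0$, the resulting coercivity constants are bounded below uniformly in $\lambda_0\in[0,1]$. With $c_0$ pinned down in this way, iterating the step $\lambda_0\mapsto\lambda_0+c_0$ finitely many times, starting from the base case $\lambda=0$ of Lemma \ref{lem:linear_existence}, delivers the well-posedness of \eqref{eq:MV-fbsde_exp}, although this final packaging lies beyond the present proposition.
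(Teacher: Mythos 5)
Your proposal is correct and follows essentially the same route as the paper: rewrite the $\tilde\lambda$-generator as the $\lambda_0$-generator plus a $(\tilde\lambda-\lambda_0)$-correction frozen at a proxy, define the induced map into $\cS_N$ via the assumed $\lambda_0$-well-posedness, and use the $\lambda_0$-uniform stability estimate of Proposition \ref{thm:stab_exp} together with (H.\ref{assum:fbsde_discrete_exp}\ref{item:lipschitz_exp}) to show this map is a contraction for $\tilde\lambda-\lambda_0$ small, concluding by the Banach fixed-point theorem. The only cosmetic difference is that the paper runs the iteration on the full $4$-tuple space $\cS_N$ rather than on triples, which is immaterial since the correction terms do not involve the orthogonal martingale component.
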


\begin{proof}
Throughout this proof,
let $(\bar{\phi},\bar{\psi},\bar{\gamma})\in \cM^2(0,T; \sR^n\t \sR^{n\t d}\t \sR^m)$ and  $\bar{\eta}\in L^2(\cF_T;\sR^m)$
 be fixed, 
and let $\cS_N$ be the space of  piecewise-constant processes on $\pi_N$ defined as in 
Definition \ref{def:MV-fbsde},
which is a Banach space equipped with  the norm $\|\cdot\|_{\cS_N}$ defined as
$$
\|(x,y,z,m)\|_{\cS_N}\coloneqq
\bigg(\max_{i\in \cN}\big(\sE[|x_i|^2]+\sE[|y_i|^2]\big)+\sum_{i=0}^{N-1}\sE[|z_i|^2]\tau_N+\sE[|m_N|^2]\bigg)^{1/2},
\q (x,y,z,m)\in \cS_N.
$$
For each $c\in (0,1)$, let $\cI_{\lambda_0+c}:\cS_N\to \cS_N$ 
be the mapping 
such that for all $(x,y,z,m)\in \cS_N$,
$\cI_{\lambda_0+c}(x,y,z,m)=(X,Y,Z,M)\in \cS_N$ is the unique solution to the following MV-\fbsde defined on $\pi_N$:
for all $i\in \cN_{<N}$,
\begin{align}
\begin{split}
\Delta X_i&=
[(1-\lambda_0)\b_1 (-G^*Y_{i})+\lambda_0 b(t_{i},\Theta_{i},\sP_{\Theta_{i}})
+\phi^c_{i}]\tau_N  
\\
&\quad
+[
(1-\lambda_0)\b_1 (-G^*Z_{i})+\lambda_0 \sigma(t_{i},\Theta_{i},\sP_{\Theta_{i}})
+\psi^c_i] \, \Delta W_i, 
\\
\Delta Y_i&=-[
(1-\lambda_0)\b_2GX_i+\lambda_0 f(t_{i},\Theta_{i}, \sP_{\Theta_{i}})
+\gamma^c_i]\tau_N+Z_i\,\Delta W_i+\Delta M_i,
\\
X_0&=\xi_0,\q Y_N= (1-\lambda_0)G X_N + \lambda_0g(X_N,\sP_{X_N})+\eta^c,
\end{split}
\end{align}
where  $\Theta=(X,Y,Z)$, 
and for each $\theta=(x,y,z)$,
$\phi^c_{i}\coloneqq c (\b_1 G^*y_{i}+ b(t_{i},\theta_{i},\sP_{\theta_{i}}))+\bar{\phi}_{i}$,
$\psi^c_i \coloneqq c (\b_1 G^*z_{i}+ \sigma (t_{i},\theta_{i},\sP_{\theta_{i}}))+\bar{\psi}_{i}$,
$\gamma^c_i\coloneqq c (-\b_2 G z_{i}+ f (t_{i},\theta_{i},\sP_{\theta_{i}}))+\bar{\gamma}_i$
and $\eta^c \coloneqq c (-Gx_N+g(x_N,\sP_{x_N}))+\bar{\eta}$.
The well-posedness assumption of 
\eqref{eq:MV-fbsde_exp_lambda} with $\lambda=\lambda_0$
and (H.\ref{assum:fbsde_discrete_exp})
 ensure that the mapping $\cI_{\lambda_0+c}$ is well-defined for all $c>0$.

We now show that there exists a constant $c_0\in (0,1)$, 
depending only on the constants in  (H.\ref{assum:fbsde_discrete_exp}),
such that $\cI_{\lambda_0+c}:\cS_N\to \cS_N$ is a contraction for  
all $c\in (0, c_0]$.
Let $(\hat{x},\hat{y},\hat{z},\hat{m}), (\tilde{x}, \tilde{y}, \tilde{z}, \tilde{m})\in \cS_N$ be  given, 
 $(\hat{X},\hat{Y},\hat{Z},\hat{M})=\cI_{\lambda_0+c}(\hat{x},\hat{y},\hat{z},\hat{m})$
 and
$(\tilde{X}, \tilde{Y}, \tilde{Z}, \tilde{M})=\cI_{\lambda_0+c}(\tilde{x}, \tilde{y}, \tilde{z}, \tilde{m})$.
By applying Proposition \ref{thm:stab_exp} 
with $\lambda=\lambda_0$, $(\phi,\psi,\gamma,\eta)=(\hat{\phi}^c,\hat{\psi}^c,\hat{\gamma}^c,\hat{\eta}^c)$,
$(\bar{\phi},\bar{\psi},\bar{\gamma},\bar{\eta})=(\tilde{\phi}^c,\tilde{\psi}^c,\tilde{\gamma}^c,\tilde{\eta}^c)$,
$\bar{\xi}_0=\xi_0$
and $(\bar{b},\bar{\sigma},\bar{f},\bar{g})=({b},{\sigma},{f},{g})$,
there exists   $C>0$, depending only on  constants in  (H.\ref{assum:fbsde_discrete_exp}),
such that
\begin{align*}
\begin{split}
&
\|(\hat{X}-\tilde{X},
\hat{Y}-\tilde{Y},\hat{Z}-\tilde{Z},\hat{M}-\tilde{M})\|^2_{\cS_N}
\\
&\le
C\bigg\{
\sE[ | \hat{\eta}^c-\tilde{\eta}^c|^2]
+\sum_{i=0}^{N-1}
\bigg(
\sE[| \hat{\gamma}^c_i-\tilde{\gamma}^c_i|^2]\tau_N
+\sE[|\hat{\phi}^c_{i}-\tilde{\phi}^c_{i}|^2]\tau_N
+
\sE[|\hat{\psi}^c_i-\tilde{\psi}^c_i|^2]\tau_N
\bigg)
\bigg\}
\\
&\le c^2C\|(\hat{x}-\tilde{x},
\hat{y}-\tilde{y},\hat{z}-\tilde{z},\hat{m}-\tilde{m})\|^2_{\cS_N}.
\end{split}
\end{align*}
Hence we see for $c_0=1/{\sqrt{2C}}>0$ and $c\in (0,c_0]$ that
$\cI_{\lambda_0+c}:\cS_N\to \cS_N$ is a contraction,
which together with the  Banach fixed point theorem  implies that \eqref{eq:MV-fbsde_exp_lambda} with $\lambda\in [\lambda_0,\lambda_0+c]\cap [0,1]$
and $(\phi,\psi,\gamma,\eta)=(\bar{\phi},\bar{\psi},\bar{\gamma},\bar{\eta})$
  admits a unique solution.
\end{proof}

Combining Corollary \ref{cor:uniqueness_bwd_imp}, Lemma \ref{lem:linear_existence},
 and Proposition \ref{prop:moc_bwd_imp}
implies the well-posedness of  \eqref{eq:MV-fbsde_exp}.

\begin{Theorem}
\l{thm:wp_bwd_imp}
Suppose  (H.\ref{assum:fbsde_discrete_exp}) holds.
Then for  all sufficiently large $N\in \sN$, 
 \eqref{eq:MV-fbsde_exp} admits a unique solution in $\cS_N$.
\end{Theorem}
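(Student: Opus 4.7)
The plan is to execute the method of continuation along the one-parameter family \eqref{eq:MV-fbsde_exp_lambda}: prove solvability at $\lambda=0$ directly, then bootstrap to $\lambda=1$ in finitely many steps using the uniform step size provided by Proposition \ref{prop:moc_bwd_imp}. Fix $N_0\in\sN$ as in Proposition \ref{thm:stab_exp} (which is also the $N_0$ in Corollary \ref{cor:uniqueness_bwd_imp}), and fix any $N\in\sN\cap[N_0,\infty)$.

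First, I would set up the base of the induction. Define
\[
\Lambda\coloneqq\{\lambda\in[0,1]:\text{for all }(\phi,\psi,\gamma)\in\cM^2(0,T;\sR^n\t\sR^{n\t d}\t\sR^m),\ \eta\in L^2(\cF_T;\sR^m),\ \xi_0\in L^2(\cF_0;\sR^n),
\]
\[
\text{equation }\eqref{eq:MV-fbsde_exp_lambda}\text{ admits a unique solution in }\cS_N\}.
\]
Lemma \ref{lem:linear_existence} (existence at $\lambda=0$) together with Corollary \ref{cor:uniqueness_bwd_imp} (uniqueness at every $\lambda\in[0,1]$) shows that $0\in\Lambda$.

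Next, the bootstrap step. Proposition \ref{prop:moc_bwd_imp} supplies a constant $c_0\in(0,1)$, depending only on the structural constants $T,L,G,\a,\b_1,\b_2$ from (H.\ref{assum:fbsde_discrete_exp})---and \emph{crucially not on} $\lambda_0$---such that $\lambda_0\in\Lambda$ implies $[\lambda_0,\lambda_0+c_0]\cap[0,1]\subset\Lambda$. I would then iterate: starting from $\lambda_0=0$, the set $\Lambda$ contains $c_0$; applying the proposition again at $\lambda_0=c_0$ yields $2c_0\in\Lambda$; after $k\coloneqq\lceil 1/c_0\rceil$ iterations one has $1\in\Lambda$. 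Because $c_0$ is fixed (independent of the iteration index), this process terminates in finitely many steps.

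Finally, specialising the conclusion $1\in\Lambda$ to the data $(\phi,\psi,\gamma,\eta)\equiv 0$ gives existence and uniqueness in $\cS_N$ for \eqref{eq:MV-fbsde_exp_lambda} at $\lambda=1$, which is precisely \eqref{eq:MV-fbsde_exp}, completing the proof for every $N\ge N_0$.

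The substantive analytic content---the discrete monotonicity/stability estimate underlying Proposition \ref{thm:stab_exp} and the Kunita--Watanabe construction underlying Lemma \ref{lem:linear_existence}---is already absorbed by the two preceding results, so no further obstacle remains at this stage; the only point deserving care is verifying that the constant $c_0$ produced by Proposition \ref{prop:moc_bwd_imp} genuinely does not depend on $\lambda_0$, as this uniformity is exactly what makes the finite iteration to $\lambda=1$ legitimate.
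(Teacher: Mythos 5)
Your proposal is correct and follows exactly the paper's route: the paper proves Theorem \ref{thm:wp_bwd_imp} precisely by combining Lemma \ref{lem:linear_existence} (existence at $\lambda=0$), Corollary \ref{cor:uniqueness_bwd_imp} (uniqueness for every $\lambda$), and the uniform continuation step of Proposition \ref{prop:moc_bwd_imp} to reach $\lambda=1$ in finitely many steps, then specialises to $(\phi,\psi,\gamma,\eta)\equiv 0$. Your emphasis on the independence of $c_0$ from $\lambda_0$ correctly identifies the point that makes the finite iteration legitimate.
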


%
%

\section{A posteriori  estimates for  discrete FBSDEs}\l{sec:a_posteriori_discrete}

In this section,
we  carry out the {a posteriori} error analysis 
in a discrete-time setting.
In particular, 
for any  given 
4-tuple 
$(\hat{X},\hat{Y},\hat{Z}, \hat{M})\in\cS_N$
generated by an arbitrary numerical scheme on the grid $\pi_N$,
we  derive  a computable  bound on the $L^2$-error between the approximation $(\hat{X},\hat{Y},\hat{Z}, \hat{M})$ 
 and the solution $(X^\pi,Y^\pi, Z^\pi, M^\pi)$ to \eqref{eq:MV-fbsde_exp}, 
 which requires only
knowledge of the given approximation and the data $(b, \sigma, f, g)$.
We  also demonstrate the  reliability and efficiency of
the proposed {a posteriori} error estimator.

More precisely, for any given
time grid $\pi_N$
and
numerical approximation  $(\hat{X},\hat{Y},\hat{Z}, \hat{M})\in\cS_N$,
 we  consider the following error estimator  on the grid $\pi_N$:
\begin{align}\l{eq:error_law_exp}
\begin{split}
&\cE_\pi(\hat{X},\hat{Y},\hat{Z}, \hat{M})
\\
&\coloneqq
\sE[|\hat{X}_0-\xi_0|^2]
+
\sE[|\hat{Y}_N-g(\hat{X}_N,\sP_{\hat{X}_N})|^2]
\\
&\quad +
\max_{i\in \cN_{<N}}
\sE
\bigg[
\bigg|
\hat{X}_{i+1}
-\hat{X}_{0}
-\sum_{j=0}^i
\left(
b(t_{j},\hat{\Theta}_{j},\sP_{\hat{\Theta}_{j}})\tau_N  +
\sigma (t_j,\hat{\Theta}_j,\sP_{\hat{\Theta}_j})\, \Delta W_j
\right)
\bigg|^2
\bigg]\\
&\quad +
\max_{i\in \cN_{<N}}
\sE\bigg[
\bigg|
\hat{Y}_{i+1}-\hat{Y}_0
+\sum_{j=0}^{i}
\left(
f(t_{j},\hat{\Theta}_{j}, \sP_{\hat{\Theta}_{j}})\tau_N- \hat{Z}_j\,\Delta W_j
\right)
- \hat{M}_{i+1}
\bigg|^2
\bigg]
\end{split}
\end{align}
with $\hat{\Theta}=(\hat{X},\hat{Y},\hat{Z})$.
Observe that 
 \eqref{eq:error_law_exp}
 takes a  more general form than  \eqref{eq:error_estimator_intro}, 
and takes into account   numerical approximations of the orthogonal martingale $M$.
It reduces to \eqref{eq:error_estimator_intro}
for numerical solution $(\hat{X},\hat{Y},\hat{Z})\in\cS^0_N$
 (with $\hat{M}\equiv 0$).

The estimator \eqref{eq:error_law_exp} extends the error criterion 
proposed for classical  BS$\Delta$Es in \cite{bender2013}
to 
fully coupled FBS$\Delta$Es \eqref{eq:MV-fbsde_exp}
with random initial data and mean field interaction.
Intuitively, the first term in \eqref{eq:error_law_exp} quantifies  the squared $L^2$-error of the $\hat{X}$-component at the initial time $t=0$,
the second term quantifies the  squared $L^2$-error of the $\hat{Y}$-component at the terminal time $t=T$,
and the last two terms measure 
the consistency of the approximation to the difference equations
\eqref{eq:fbsde_fwd_exp} and \eqref{eq:fbsde_bwd_exp}
defined on the time grid $\pi_N$.
In practice, 
\eqref{eq:error_law_exp}
 can be accurately evaluated
by 
approximating the expectations 
via Monte Carlo simulation
and 
by estimating the law of $({\hat{\Theta}_i})_{i\in \cN_{<N}}$ via  particle approximations;
see Section \ref{sec:numerical} for more details on the practical implementation 
of the {a posteriori} error estimator.

\color{black}

The remaining part of the section is  devoted to
proving the
 efficiency (see Theorem \ref{thm:error_efficient_discrete_exp})
and reliability  
(see Theorem \ref{thm:error_reliable_discrete_exp})
of 
  \eqref{eq:error_law_exp} 
for \eqref{eq:MV-fbsde_exp}. 
Recall that  an
{a posteriori} error estimator is   said to be efficient 
if an inequality of
the form ``error estimator $\ge$ tolerance" implies that the true error is
also greater than the tolerance possibly up to an multiplicative
constant,
while an
 {a posteriori} error estimator is said to be reliable if 
 an inequality of the form ``error
estimator $\le$ tolerance" implies that the true error is also less than the
tolerance up to another multiplicative constant.
Hence, as an efficient and reliable  error estimator,
the quantity \eqref{eq:error_law_exp} is equivalent to the squared $L^2$-error between $(\hat{X},\hat{Y},\hat{Z}, \hat{M})$ and 
the solution $(X^\pi,Y^\pi, Z^\pi, M^\pi)$ to \eqref{eq:MV-fbsde_exp}.

We start by showing that the  error estimator \eqref{eq:error_law_exp} is  efficient.
Note that the following theorem in fact holds for any  time grid $\pi_N$,
as long as the MV-\fbsde \eqref{eq:MV-fbsde_exp} admits a solution in $\cS_N$.

\begin{Theorem}\l{thm:error_efficient_discrete_exp}
Suppose (H.\ref{assum:fbsde_discrete_exp}\ref{item:lipschitz_exp}) 
holds. Then there exists a constant $C>0$,
{
depending only on $T$ and $L$ in (H.\ref{assum:fbsde_discrete_exp}\ref{item:lipschitz_exp}),}
such that
 for all  
$N\in\sN$ and every
 4-tuple of processes 
$(\hat{X},\hat{Y},\hat{Z}, \hat{M})\in \cS_N$,
\begin{align*}
\begin{split}
&\max_{i\in \cN}
\left(
\sE[|\hat{X}_{i}
-{X}^\pi_{i}|^2]
+
\sE[|\hat{Y}_{i}
-{Y}^\pi_{i}|^2]
\right)
+
\sum_{i=0}^{N-1}
\sE
[
|\hat{Z}_{i}
-{Z}^\pi_{i}|^2]\tau_N
+
\sE[|\hat{M}_{N}
-{M}^\pi_{N}|^2]
\\
&\ge
\cE_\pi(\hat{X},\hat{Y},\hat{Z}, \hat{M})/C,
\end{split}
\end{align*}
where $(X^\pi,Y^\pi, Z^\pi, M^\pi)\in \cS_N$
is a solution to MV-\fbsde \eqref{eq:MV-fbsde_exp}
defined on $\pi_N$. 

\end{Theorem}

\begin{proof}
Throughout this proof, 
let 
 $N\in\sN$ 
 and
 $(\hat{X},\hat{Y},\hat{Z}, \hat{M})\in \cS_N$
 be fixed.
We shall omit the superscript $\pi$ of  $(X^\pi,Y^\pi, Z^\pi, M^\pi)$ 
 for notational simplicity.
Let
$\hat{\Theta}=(\hat{X}, \hat{Y}, \hat{Z}),\Theta=(X, Y, Z)$,
 $(\delta \Theta, \delta X,\delta Y, \delta Z, \delta M)= ( \hat{ \Theta}-\Theta, \hat{X}-X, \hat{Y}-Y, \hat{Z}-Z, \hat{M}-M)$
and for each $t\in [0,T]$, $\phi=b, \sigma, f$
let $\delta \phi(t)= \phi(t, \hat{\Theta}_t,\sP_{\hat{\Theta}_t})-\phi(t, \Theta_t,\sP_{\Theta_t})$.
We also denote by $C$ a generic  positive constant, 
which
depends on $T$,
 $L$  in (H.\ref{assum:fbsde_discrete_exp}\ref{item:lipschitz_exp}),
and  may take a different value at each occurrence.

By  summation of \eqref{eq:MV-fbsde_exp}
over the index $i$
and insertion in \eqref{eq:error_law_exp},
\begin{align}\l{eq:error_delta_bwd_imp}
\begin{split}
\cE_\pi(\hat{X},\hat{Y},\hat{Z}, \hat{M})
&=
\sE[|\delta {X}_0|^2]
+
\sE[|\delta {Y}_N-(g(\hat{X}_N,\sP_{\hat{X}_N})-g({X}_N,\sP_{{X}_N}))|^2]
\\
&\quad +
\max_{i\in \cN_{<N}}
\sE
\bigg[
\bigg|
\underbrace{
\delta {X}_{i+1}
-\delta {X}_{0}
-\sum_{j=0}^i
\big(
\delta b(t_{j})\tau_N  +
\delta \sigma (t_j)\, \Delta W_j
\big)
}_{\coloneqq A_{i}}
\bigg|^2
\bigg]\\
&\quad +
\max_{i\in \cN_{<N}}
\bigg[
\bigg|
\underbrace{
\delta {Y}_{i+1}-\delta {Y}_0
+\sum_{j=0}^{i}
\big(
\delta f(t_{j})\tau_N- \delta{Z}_j\,\Delta W_j-\Delta (\delta {M})_j
\big)
}_{\coloneqq B_{i}}
\bigg|^2
\bigg],
\end{split}
\end{align}
where   the last  term   used   $\delta M_0=0$.
The Lipschitz continuity of  $g$ and the Cauchy-Schwarz inequality imply  that
\begin{align*}
&\sE[|\delta {Y}_N-(g(\hat{X}_N,\sP_{\hat{X}_N})-g({X}_N,\sP_{{X}_N}))|^2]
\le 
2\sE[|\delta {Y}_N|^2]+2L^2\sE[(|\delta {X}_N|+\cW_2(\sP_{\hat{X}_N},\sP_{{X}_N}))^2]
\\
&\le 
C\big(\sE[|\delta {Y}_N|^2]+\sE[(|\delta {X}_N|^2+\cW^2_2(\sP_{\hat{X}_N},\sP_{{X}_N}))]
\big)
\le
C(\sE[|\delta {Y}_N|^2]+\sE[|\delta {X}_N|^2]),
\end{align*}
which together with \eqref{eq:error_delta_bwd_imp} leads  to the estimate that
\begin{align}\l{eq:efficient_error1_bwd_imp}
\begin{split}
\cE_\pi(\hat{X},\hat{Y},\hat{Z}, \hat{M})
&\le
C\bigg(
\max_{i\in \cN}\sE[|\delta {X}_i|^2]
+\max_{i\in \cN}\sE[|\delta {Y}_i|^2]
 +
\max_{i\in \cN_{<N}}
(\sE[ |A_{i}|^2] +\sE[ |B_{i}|^2])
\bigg),
\end{split}
\end{align}
where the quantities $(A_i,B_i)_{i\in \cN_{<N}}$ are defined as in \eqref{eq:error_delta_bwd_imp}.

We first estimate $A_i$ for  $i\in \cN_{<N}$.
The Cauchy-Schwarz inequality,
the adaptedness of  coefficients and
the fact that $W$ is a martingale with $\sE_j[\Delta W_j(\Delta W_j)^*]=\tau_N I_d$ 
for  $j\in \cN_{<N}$
yield 
\begin{align*}
\sE[|A_i|^2]
&\le
C\bigg(
\sE[|\delta {X}_{i+1}|^2]
+\sE[|\delta {X}_{0}|^2]
+\sE\bigg[ \bigg|
\sum_{j=0}^i
\delta b(t_{j})\tau_N
\bigg|^2\bigg]  
+
\sE\bigg[ \bigg|\sum_{j=0}^i
\delta \sigma (t_j)\, \Delta W_j
\bigg|^2\bigg]
\bigg)
\\
&\le 
C\bigg(
\max_{i\in\cN}\sE[|\delta {X}_{i}|^2]
+\sE\bigg[ 
\sum_{j=0}^i
|\delta b(t_{j})|^2\tau_N\bigg] 
T
+\sum_{j=0}^i\sE\left[ \left|
\delta \sigma (t_j)\, \Delta W_j
\right|^2\right]
\bigg)
\\
&\le 
C\bigg(
\max_{i\in\cN}\sE[|\delta {X}_{i}|^2]
+T\sum_{j=0}^{N-1}\sE[ 
|\delta b(t_{j})|^2\tau_N] 
+\sum_{j=0}^{N-1}\sE\left[ 
|\delta \sigma (t_j)|^2\, \tau_N
\right]
\bigg).
\end{align*}
Note that the definitions of $\delta b$, $\delta\sigma$ and 
the Lipschitz continuity of $b$, $\sigma$
in (H.\ref{assum:fbsde_discrete_exp}\ref{item:lipschitz_exp}) show that
 for all $j\in \cN_{<N}$
and $\phi=b,\sigma$,
\begin{align*}
\sE[ |\delta \phi(t_j)|^2] 
&\le C\sE[ (|\delta \Theta_j|+\cW_2(\sP_{\Theta_j},\sP_{\hat{\Theta}_j}))^2] 
\le  C(\sE[ |\delta X_j|^2]+\sE[ |\delta Y_j|^2]+\sE[ |\delta Z_j|^2]).
\end{align*}
Hence,   for all $i\in \cN_{<N}$,
\begin{align}\l{eq:efficient_error2_bwd_imp}
\begin{split}
\sE[|A_i|^2]
&\le 
C\bigg(
\max_{i\in\cN}\sE[|\delta {X}_{i}|^2]
+\sum_{j=0}^{N-1}
\big( 
\sE[ |\delta X_j|^2]+\sE[ |\delta Y_j|^2]+\sE[ |\delta Z_j|^2]
\big) 
\tau_N
\bigg)
\\
&\le
C\bigg(
\max_{i\in\cN}\sE[|\delta {X}_{i}|^2+ |\delta Y_i|^2]
+
\sum_{j=0}^{N-1}\sE[ |\delta Z_j|^2]\tau_N
\bigg).
\end{split}
\end{align}

We proceed to derive an upper bound of $B_i$ for all $i\in \cN_{<N}$.
The Cauchy-Schwarz inequality
and  
the fact that 
the martingale $\delta M$ is strongly orthogonal to  $W$
imply that
\begin{align*}
\begin{split}
\sE[|B_i|^2]
&\le
4\bigg(
\sE[|\delta Y_{i+1}|^2]
+\sE[|\delta Y_{0}|^2]
+\sE\bigg[ \bigg|
\sum_{j=0}^i
\delta f(t_{j})\tau_N
\bigg|^2\bigg]  
+\sE\bigg[ \bigg|
\sum_{j=0}^i
\delta{Z}_j\,\Delta W_j+\Delta (\delta {M})_j
\bigg|^2\bigg]  
\bigg)
\\
&\le
4\bigg(
2\max_{i\in \cN}\sE[|\delta Y_{i}|^2]
+\sE\bigg[ \bigg|
\sum_{j=0}^i
\delta f(t_{j})\tau_N
\bigg|^2\bigg]  
+
\sE\bigg[ \bigg|
\sum_{j=0}^i
\delta{Z}_j\,\Delta W_j\bigg|^2\bigg]
+\sE\bigg[ \bigg|
\sum_{j=0}^i\Delta (\delta {M})_j
\bigg|^2\bigg]  
\bigg)
\\
&\le 
4\bigg(
2\max_{i\in \cN}\sE[|\delta Y_{i}|^2]
+\sE\bigg[ \bigg|
\sum_{j=0}^i
\delta f(t_{j})\tau_N
\bigg|^2\bigg]  
 +\sE\bigg[ 
\sum_{j=0}^i
|\delta{Z}_j|^2\,\tau_N\bigg]
+\sE\bigg[ 
\sum_{j=0}^i|\Delta (\delta {M})_j|^2
\bigg]
\bigg)
\\
&\le
4\bigg(
2\max_{i\in \cN}\sE[|\delta Y_{i}|^2]
+\sE\bigg[ \bigg|
\sum_{j=0}^i
\delta f(t_{j})\tau_N
\bigg|^2\bigg]  
 +\sE\bigg[ 
\sum_{j=0}^{N-1}
|\delta{Z}_j|^2\,\tau_N\bigg]
+\sE[| \delta {M}_N|^2]
\bigg),
\end{split}
\end{align*}
where 
 the last inequality
 used    $\delta M_0=0$.
Moreover, 
by  using the Cauchy-Schwarz inequality
and the Lipschitz continuity of $f$,
 for all $i\in \cN_{<N}$,
\begin{align*}
\begin{split}
&\sE\bigg[ \bigg|
\sum_{j=0}^i
\delta f(t_{j})\tau_N
\bigg|^2\bigg]  
\le
\sE\bigg[ \bigg(
\sum_{j=0}^{N-1}
|\delta f(t_{j})|\tau_N
\bigg)^2\bigg]  
\le 
T\sE\bigg[ 
\sum_{j=0}^{N-1}
|\delta f(t_{j})|^2\tau_N
\bigg]   
\\ 
&\le 
C\bigg( 
\sum_{j=0}^{N-1}
\sE\bigg[\big(
|\delta \Theta_{j}|
+\cW_2(\sP_{\Theta_{j}},\sP_{\hat{\Theta}_{j}})
\big)^2
\bigg]
\tau_N\bigg) 
\le 
C\bigg( 
\sum_{j=0}^{N-1}
\sE[
|\delta X_{j}|^2+|\delta Y_{j}|^2+
|\delta Z_{j}|^2]
\tau_N
\bigg)   
\\
&\le 
C
\bigg( 
\max_{i\in\cN}
\sE[
|\delta X_i|^2+|\delta Y_i|^2]
 +
\sum_{j=0}^{N-1}
\sE[
|\delta Z_{j}|^2]
\tau_N\bigg).  
\end{split}
\end{align*}
Hence, for all $i\in \cN_{<N}$,
\begin{align}\l{eq:efficient_error3_bwd_imp}
\begin{split}
\sE[|B_i|^2]
&\le C\bigg(
\max_{i\in \cN}\sE[|\delta X_{i}|^2+|\delta Y_{i}|^2]
 +
\sum_{j=0}^{N-1}
\sE[ |\delta{Z}_j|^2]\,\tau_N
 +\sE[| \delta {M}_N|^2]
\bigg).
\end{split}
\end{align}
The desired estimate then follows from 
\eqref{eq:efficient_error1_bwd_imp}, \eqref{eq:efficient_error2_bwd_imp} and \eqref{eq:efficient_error3_bwd_imp}.
%
%
%
\end{proof}

We then proceed to  establish the reliability of the {a posteriori} error estimator \eqref{eq:error_law_exp}
by first
introducing the following auxiliary processes.
Suppose that (H.\ref{assum:fbsde_discrete_exp}) holds,
and  $(\hat{X},\hat{Y},\hat{Z}, \hat{M})\in\cS_N$
is a given approximation on a time grid $\pi_N$.
We  introduce the   processes $(\bar{X},\bar{Y},\bar{Z}, \bar{M})\in\cS_N$
such that $\bar{Z}\equiv \hat{Z}$,  $\bar{M}\equiv \hat{M}$, 
$\bar{X}_0=\hat{X}_0$, $\bar{Y}_0=\hat{Y}_0$
and  for all $i\in \cN_{<N}$,
\begin{align}\l{eq:Theta_bar_exp}
\begin{split}
\Delta\bar{X}_{i}
&\coloneqq
b(t_{i},\hat{\Theta}_{i},\sP_{\hat{\Theta}_{i}})\tau_N  +
\sigma (t_i,\hat{\Theta}_i,\sP_{\hat{\Theta}_i})\, \Delta W_i,
\\
\Delta \bar{Y}_{i}
&\coloneqq
-f(t_{i},\hat{\Theta}_{i}, \sP_{\hat{\Theta}_{i}})\tau_N+ \hat{Z}_i\,\Delta W_i+\Delta \hat{M}_i
\end{split}
\end{align}
with $\hat{\Theta}= (\hat{X},\hat{Y},\hat{Z})$.
Then it is clear that the error estimator \eqref{eq:error_law_exp} can be equivalently written as
\begin{align}\l{eq:error_law_Theta_bar_bwd_imp}
\begin{split}
\cE_\pi(\hat{X},\hat{Y},\hat{Z}, \hat{M})
&=
\sE[|\hat{X}_0-\xi_0|^2]
+
\sE[|\hat{Y}_N-g(\hat{X}_N,\sP_{\hat{X}_N})|^2]
\\
&\quad +
\max_{i\in \cN_{<N}}
\sE[|\hat{X}_{i+1}
-\bar{X}_{i+1}|^2]
+
\max_{i\in \cN_{<N}}
[|\hat{Y}_{i+1}-\bar{Y}_{i+1}|^2].
\end{split}
\end{align}

With the above processes $(\bar{X},\bar{Y},\bar{Z}, \bar{M})\in\cS_N$ at hand,
we now show the  error estimator \eqref{eq:error_law_exp} is  reliable for all sufficiently fine time grids $\pi_N$.

\begin{Theorem}\l{thm:error_reliable_discrete_exp}
Suppose (H.\ref{assum:fbsde_discrete_exp}) 
holds.
Then there exists a constant $C>0$,
such that
for  
all 
sufficiently large $N$
and
for every 
4-tuple of processes $(\hat{X},\hat{Y},\hat{Z}, \hat{M})\in \cS_N$,
\begin{align*}
\begin{split}
&\max_{i\in \cN}
\left(
\sE[|\hat{X}_{i}
-{X}^\pi_{i}|^2]
+
\sE[|\hat{Y}_{i}
-{Y}^\pi_{i}|^2]
\right)
+
\sum_{i=0}^{N-1}
\sE
[
|\hat{Z}_{i}
-{Z}^\pi_{i}|^2]\tau_N
+
\sE[|\hat{M}_{N}
-{M}^\pi_{N}|^2]
\\
&\le
C\cE_\pi(\hat{X},\hat{Y},\hat{Z}, \hat{M}),
\end{split}
\end{align*}
where $(X^\pi,Y^\pi, Z^\pi, M^\pi)\in \cS_N$
is the solution to MV-\fbsde \eqref{eq:MV-fbsde_exp}
defined on $\pi_N$. 

\end{Theorem}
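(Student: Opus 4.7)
The plan is to derive reliability from the stability estimate in Proposition \ref{thm:stab_exp}, using the auxiliary tuple $(\bar X, \bar Y, \bar Z, \bar M) = (\bar X, \bar Y, \hat Z, \hat M) \in \cS_N$ introduced in \eqref{eq:Theta_bar_exp}. The key observation is that $(\bar X, \bar Y, \hat Z, \hat M)$ satisfies a perturbed version of \eqref{eq:MV-fbsde_exp_lambda} at $\lambda = 1$ with the same generator $(b,\sigma,f,g)$, initial data $\hat X_0$, and explicitly computable perturbations, while $(X^\pi, Y^\pi, Z^\pi, M^\pi)$ solves the same system with initial data $\xi_0$ and zero perturbations. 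By adding and subtracting $b(t_i, \bar\Theta_i, \sP_{\bar\Theta_i})$ (and analogously for $\sigma$, $f$) in \eqref{eq:Theta_bar_exp}, and reading off the terminal residual, the relevant perturbations are
$$\bar\phi_i = b(t_i,\hat\Theta_i,\sP_{\hat\Theta_i}) - b(t_i,\bar\Theta_i,\sP_{\bar\Theta_i}), \quad \bar\psi_i = \sigma(t_i,\hat\Theta_i,\sP_{\hat\Theta_i}) - \sigma(t_i,\bar\Theta_i,\sP_{\bar\Theta_i}),$$
$$\bar\gamma_i = f(t_i,\hat\Theta_i,\sP_{\hat\Theta_i}) - f(t_i,\bar\Theta_i,\sP_{\bar\Theta_i}), \quad \bar\eta = \bar Y_N - g(\bar X_N, \sP_{\bar X_N}),$$
with $\bar\Theta_i = (\bar X_i, \bar Y_i, \hat Z_i)$.

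Next, I would apply Proposition \ref{thm:stab_exp} with $\lambda_0 = 1$, the common generator $(b,\sigma,f,g) = (\bar b, \bar\sigma, \bar f, \bar g)$, the two initial data $\xi_0$ and $\hat X_0$, and the two sets of perturbations $(0,0,0,0)$ and $(\bar\phi, \bar\psi, \bar\gamma, \bar\eta)$ (valid for all $N \ge N_0$). Since the generator agrees, the $(f - \bar f)$, $(b - \bar b)$, $(\sigma - \bar\sigma)$, $(g - \bar g)$ terms vanish in the Proposition's bound, leaving
$$\max_{i \in \cN} \bigl(\sE[|X^\pi_i - \bar X_i|^2] + \sE[|Y^\pi_i - \bar Y_i|^2]\bigr) + \sum_{i=0}^{N-1} \sE[|Z^\pi_i - \hat Z_i|^2]\tau_N + \sE[|M^\pi_N - \hat M_N|^2] \le C\,R,$$
where $R = \sE[|\xi_0 - \hat X_0|^2] + \sE[|\bar\eta|^2] + \sum_{i=0}^{N-1}\sE[|\bar\phi_i|^2 + |\bar\psi_i|^2 + |\bar\gamma_i|^2]\tau_N$.

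Then I would bound $R$ by $C\cE_\pi(\hat X, \hat Y, \hat Z, \hat M)$. The first term is precisely one summand of $\cE_\pi$. For $\bar\eta$, splitting $\bar Y_N - g(\bar X_N,\sP_{\bar X_N}) = (\bar Y_N - \hat Y_N) + (\hat Y_N - g(\hat X_N, \sP_{\hat X_N})) + (g(\hat X_N, \sP_{\hat X_N}) - g(\bar X_N, \sP_{\bar X_N}))$ and using the Lipschitz continuity of $g$ in (H.\ref{assum:fbsde_discrete_exp}\ref{item:lipschitz_exp}) together with the equivalent representation \eqref{eq:error_law_Theta_bar_bwd_imp} gives $\sE[|\bar\eta|^2] \le C\cE_\pi$. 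For the perturbation sums, the Lipschitz continuity of $b, \sigma, f$ together with $\hat Z = \bar Z$ and $\cW_2^2(\sP_{\hat\Theta_i},\sP_{\bar\Theta_i}) \le \sE[|\hat X_i - \bar X_i|^2 + |\hat Y_i - \bar Y_i|^2]$ yield
$$\sE[|\bar\phi_i|^2 + |\bar\psi_i|^2 + |\bar\gamma_i|^2] \le C\bigl(\sE[|\hat X_i - \bar X_i|^2] + \sE[|\hat Y_i - \bar Y_i|^2]\bigr) \le C\cE_\pi,$$
again by \eqref{eq:error_law_Theta_bar_bwd_imp} (and the trivial bound at $i=0$ since $\bar X_0 = \hat X_0$, $\bar Y_0 = \hat Y_0$). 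Summing over $i$ multiplies by at most $T$.

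Finally, I would conclude via the triangle inequality $\sE[|\hat X_i - X^\pi_i|^2] \le 2\sE[|\hat X_i - \bar X_i|^2] + 2\sE[|\bar X_i - X^\pi_i|^2]$ (similarly for $Y$), with the first term controlled by $\cE_\pi$ via \eqref{eq:error_law_Theta_bar_bwd_imp} and the second by the stability estimate; the $Z$ and $M$ terms need no such splitting since $\bar Z = \hat Z$ and $\bar M = \hat M$. The main subtlety is the clean identification of $(\bar X, \bar Y, \hat Z, \hat M)$ as a solution to \eqref{eq:MV-fbsde_exp_lambda} with $\lambda = 1$ and the correct bookkeeping of the terminal perturbation $\bar\eta$; once this is set up, all remaining estimates follow mechanically from Lipschitz continuity and the definition of $\cE_\pi$ in the form \eqref{eq:error_law_Theta_bar_bwd_imp}.
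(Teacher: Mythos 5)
Your proposal is correct and follows essentially the same route as the paper: introduce the auxiliary tuple $(\bar X,\bar Y,\hat Z,\hat M)$ from \eqref{eq:Theta_bar_exp}, apply the stability estimate of Proposition \ref{thm:stab_exp} at $\lambda_0=1$ to compare it with $(X^\pi,Y^\pi,Z^\pi,M^\pi)$, bound the resulting residuals by $\cE_\pi$ via the Lipschitz conditions and the representation \eqref{eq:error_law_Theta_bar_bwd_imp}, and finish with the triangle inequality. The only difference is bookkeeping: the paper assigns the barred tuple the zero generator and puts $b(t_i,\hat\Theta_i,\sP_{\hat\Theta_i})$ etc.\ entirely into the perturbations, whereas you keep the common generator and perturb by the differences $b(t_i,\hat\Theta_i,\sP_{\hat\Theta_i})-b(t_i,\bar\Theta_i,\sP_{\bar\Theta_i})$; both applications of Proposition \ref{thm:stab_exp} produce the identical right-hand side.
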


{
\begin{Remark}
\label{rmk:constant_dependence}
The constant $C$ in Theorem \ref{thm:error_reliable_discrete_exp} depends    on  the constants
$T,  \alpha, \b_1,\b_2, L$    in (H.\ref{assum:fbsde_discrete_exp}), 
the spectral norm of $G$ in (H.\ref{assum:fbsde_discrete_exp}\ref{item:monotone_exp}), 
the spectral norm of $(G^*G)^{-1}G^*$ if $m\ge n$,
and 
the spectral norm of $(GG^*)^{-1}G$ if $n\ge m$.
This can be seen by examining 
the proofs  of  Proposition \ref{thm:stab_exp} and Theorem \ref{thm:error_reliable_discrete_exp} carefully.
In particular, the constant $C$
   does not depend explicitly on the dimensions $m, n ,d $. 
   Similar remarks also apply to the constant $C$ in the statements of Proposition  \ref{thm:time_error}, 
   Theorem \ref{thm:a_posterior_conts} and      Corollary \ref{cor:deep_bsde}.
 \end{Remark}
}

\begin{proof}
Throughout this proof, 
let $N_0\in \sN$ be  the  natural number  in Proposition \ref{thm:stab_exp},
 $N\in \sN\cap [N_0,\infty)$
 and
 $(\hat{X},\hat{Y},\hat{Z}, \hat{M})\in \cS_N$
 be fixed.
 Let $(X^\pi,Y^\pi, Z^\pi, M^\pi)$ be  a solution to  \eqref{eq:MV-fbsde_exp}
 on $\pi_N$,
and  $C$ be a generic  positive constant, 
which
depends only on the constants   in (H.\ref{assum:fbsde_discrete_exp})
and  may take a different value at each occurrence.

Let $(\bar{X},\bar{Y},\bar{Z}, \bar{M})\in \cS_N$
be the auxiliary processes defined as in \eqref{eq:Theta_bar_exp}
and $\bar{\Theta}= (\bar{X},\bar{Y}, \bar{Z})$.
 We  first derive
  an $L^2$-estimate 
of
the difference between
 $(\bar{X},\bar{Y},\bar{Z}, \bar{M})$ and 
the solution $(X^\pi,Y^\pi, Z^\pi, M^\pi)$ to \eqref{eq:MV-fbsde_exp}.
Observe that  $(\bar{X},\bar{Y},\bar{Z}, \bar{M})\in \cS_N$
is a solution to \eqref{eq:MV-fbsde_exp_lambda} with 
$\lambda=1$, generator $({b},{\sigma},{f},{g})=0$,
${\xi}_0=\bar{X}_0$, ${\eta}=\bar{Y}_N$
and 
$({\phi},{\psi},{\gamma})\in \cM^2(0,T; \sR^n\t \sR^{n\t d}\t \sR^m)$
satisfying for all $i\in \cN_{<N}$ that
${\phi}_{i}=b(t_{i},\hat{\Theta}_{i},\sP_{\hat{\Theta}_{i}})$,
${\psi}_i=\sigma (t_i,\hat{\Theta}_i,\sP_{\hat{\Theta}_i})$
and ${\gamma}_i=f(t_{i},\hat{\Theta}_{i}, \sP_{\hat{\Theta}_{i}})$.
Hence by
 Proposition \ref{thm:stab_exp} 
(with $\lambda_0=1$
and $({\phi},{\psi},{\gamma},\eta)=0$), 
there exists a constant $C>0$ such  that
\begin{align*}
\begin{split}
&\max_{i\in \cN}
\left(
\sE[|{X}^\pi_{i}
-\bar{X}_{i}|^2]
+
\sE[|{Y}^\pi_{i}
-\bar{Y}_{i}|^2]
\right)
+
\sum_{i=0}^{N-1}
\sE
[
|{Z}^\pi_{i}
-\bar{Z}_{i}|^2]\tau_N
+
\sE[|{M}^\pi_{N}
-\bar{M}_{N}|^2]
\\
&\le
C\bigg\{
\sE[| \xi_{0}-\bar{X}_0|^2]
+
\sE[ |g(\bar{X}_N,\sP_{\bar{X}_N})
-\bar{Y}_N|^2]
+\sum_{i=0}^{N-1}
\bigg(
\sE[|{f}(t_{i},\bar{\Theta}_i,\sP_{\bar{\Theta}_i})-f(t_{i},\hat{\Theta}_{i}, \sP_{\hat{\Theta}_{i}})|^2]\tau_N
\\
&\quad
+
\sE[|{b}(t_{i},\bar{\Theta}_{i},\sP_{\bar{\Theta}_{i}})- b(t_{i},\hat{\Theta}_{i},\sP_{\hat{\Theta}_{i}})|^2]\tau_N
+
\sE|{\sigma}(t_i,\bar{\Theta}_i,\sP_{\bar{\Theta}_i})-\sigma (t_i,\hat{\Theta}_i,\sP_{\hat{\Theta}_i})|^2]\tau_N
\bigg)
\bigg\},
\end{split}
\end{align*}
which together with the Lipschitz continuity of the generator and the fact that $\bar{Z}\equiv \hat{Z}$, $\bar{X}_0=\hat{X}_0$ and  $\bar{Y}_0=\hat{Y}_0$,
yields   that
\begin{align*}
\begin{split}
&\max_{i\in \cN}
\left(
\sE[|{X}^\pi_{i}
-\bar{X}_{i}|^2]
+
\sE[|{Y}^\pi_{i}
-\bar{Y}_{i}|^2]
\right)
+
\sum_{i=0}^{N-1}
\sE
[
|{Z}^\pi_{i}
-\bar{Z}_{i}|^2]\tau_N
+
\sE[|{M}^\pi_{N}
-\bar{M}_{N}|^2]
\\
&\le
C\bigg(
\sE[| \xi_{0}-\hat{X}_0|^2]
+
\sE[ |g(\bar{X}_N,\sP_{\bar{X}_N})
-\bar{Y}_N|^2]
+\sup_{i\in \cN_{<N}}
\big(
\sE[|\bar{X}_i-\hat{X}_i|^2]
+\sE[|\bar{Y}_i-\hat{Y}_i|^2]
\big)
\bigg)
\\
&\le
C\bigg(
\sE[| \xi_{0}-\hat{X}_0|^2]
+
\sE[ |g(\bar{X}_N,\sP_{\bar{X}_N})-g(\hat{X}_N,\sP_{\hat{X}_N})|^2
+|g(\hat{X}_N,\sP_{\hat{X}_N})-\hat{Y}_N|^2+|\hat{Y}_N-\bar{Y}_N|^2]
\\
&\quad 
+\sup_{i\in \cN_{<N}}
\big(
\sE[|\bar{X}_{i+1}-\hat{X}_{i+1}|^2]
+\sE[|\bar{Y}_{i+1}-\hat{Y}_{i+1}|^2]
\big)
\bigg)
\\
&\le
C\cE_\pi(\hat{X},\hat{Y},\hat{Z}, \hat{M}),
\end{split}
\end{align*}
where  the last line  used 
the equivalent definition \eqref{eq:error_law_Theta_bar_bwd_imp} of 
the  estimator \eqref{eq:error_law_exp}.
Consequently, by using the triangle inequality 
and the fact that 
$\bar{X}_0=\hat{X}_0$,  $\bar{Y}_0=\hat{Y}_0$,
$\bar{M}\equiv \hat{M}$
and  $\bar{Z}\equiv \hat{Z}$,
\begin{align*}
\begin{split}
&\max_{i\in \cN}
\left(
\sE[|\hat{X}_{i}
-{X}^\pi_{i}|^2]
+
\sE[|\hat{Y}_{i}
-{Y}^\pi_{i}|^2]
\right)
+
\sum_{i=0}^{N-1}
\sE
[
|\hat{Z}_{i}
-{Z}^\pi_{i}|^2]\tau_N
+
\sE[|\hat{M}_{N}
-{M}^\pi_{N}|^2]
\\
&\le
2\max_{i\in \cN}
\left(
{
\mathbb{E}[|\hat{X}_{i}
-\bar{X}_{i}|^2] }
+\sE[|\bar{X}_{i}
-{X}^\pi_{i}|^2]
+
\sE[
|\hat{Y}_{i}
-\bar{Y}_{i}|^2
+|\bar{Y}_{i}
-{Y}^\pi_{i}|^2]
\right)
\\
&\quad
+
\sum_{i=0}^{N-1}
\sE
[
|\bar{Z}_{i}
-{Z}^\pi_{i}|^2]\tau_N
+
\sE[|\bar{M}_{N}
-{M}^\pi_{N}|^2]
\le C\cE_\pi(\hat{X},\hat{Y},\hat{Z}, \hat{M}).
\end{split}
\end{align*}
This proves the desired estimate.
\end{proof}

\section{A posteriori  estimates for continuous MV-FBSDEs}\l{sec:a_posteriori_conts}

Based on  Theorems
\ref{thm:wp_bwd_imp} and 
 \ref{thm:error_reliable_discrete_exp},
we  
  prove that 
 the  approximation error between a given numerical approximation and the solutions to  
\eqref{eq:fbsde_conts_intro}
  can also be measured by the {a posteriori} error estimator \eqref{eq:error_law_exp}
  together with
  a measure of 
    the time regularity of the exact solution, which vanishes as the  stepsize $\tau_N$ tends to zero.
{We shall also provide a theoretical justification for the convergence of a commonly used machine learning-based algorithm for solving MV-FBSDEs
based on  the {a posteriori} error estimates.}

In the sequel, 
we  assume that
 $W=(W_t)_{t\in [0,T]}$  is
 a $d$-dimensional Brownian motion,
 $\sF= \{ \cF_t \}_{t\in [0,T]}$ is the augmented filtration generated by $W$ and an independent initial $\sigma$-algebra $\cF_0$,
and assume the generator
$(b, \sigma, f, g)$ of 
  the   MV-FBSDE
\eqref{eq:fbsde_conts_intro}
satisfies 
 (H.\ref{assum:fbsde_discrete_exp}).
Since
 every $\sF$ local martingale can be represented as a stochastic integral with respect to $W$
(see \cite[Theorem  4.33 on p.~176]{jacod1987}),
 extending 
Theorem 2 in \cite{bensoussan2015} to the present case with random initial condition $\xi_0$
shows that 
 \eqref{eq:fbsde_conts_intro}
 admits  a unique triple $(X,Y,Z)\in \cM^2(0,T;\sR^n\t \sR^m\t \sR^{m\t d })$.
%
%
To analyze the time discretization error, 
we further assume the following time regularity of the coefficients of \eqref{eq:fbsde_conts_intro}:

\begin{Assumption}\l{assum:time_reg}
There exists an increasing function $\ol{\om}: [0,\infty] \to [0, \infty]$,
vanishing at $0$ and continuous at $0$,
such that 
it holds 
  for $\sP$-a.s.~$\omega\in \Om$,  
  all $t,s\in [0,T]$,
 $(x,y,z)\in  \sR^n\t \sR^m\t \sR^{m\t d}$,
 $\mu\in \cP_2(\sR^{n+m+md})$,
 $\phi=b,\sigma,f$ that
 $ |\phi(t,x,y,z,\mu)-\phi(s,x,y,z,\mu)|\le \ol{\om}(|t-s|)$.
\end{Assumption}

%

To quantify the performance of \eqref{eq:error_estimator_intro},
for any numerical solution  $(\hat{X},\hat{Y},\hat{Z})\in\cS^0_N$  to \eqref{eq:fbsde_conts_intro},
 we consider 
 the squared approximation error
 of $(\hat{X},\hat{Y},\hat{Z})$
 on the interval $[0,T]$ defined by
\begin{align}\l{eq:error_T}
\begin{split}
&\err(\hat{X},\hat{Y},\hat{Z})
\\
&\coloneqq \max_{i\in \cN_{<N}}
\max_{t\in [t_i,t_{i+1}]}
\left(
\sE[|
X_t
-\hat{X}_{i}|^2]
+
\sE[|
{Y}_t
-\hat{Y}_{i}|^2]
\right)
+
\sum_{i=0}^{N-1}
\sE\bigg[
\int_{t_i}^{t_{i+1}}
|{Z}_{t}
-\hat{Z}_{i}|^2
]
\,dt\bigg],
\end{split}
\end{align}
and  
 the squared approximation error
of $(\hat{X},\hat{Y},\hat{Z})$
on the grid $\pi_N$  defined as follows (see \cite{zhang2004,lionnet2015}):
\begin{align}\l{eq:error_T_N}
\begin{split}
\err_\pi(\hat{X},\hat{Y},\hat{Z})
&\coloneqq \max_{i\in \cN}
\left(
\sE[|
X_i
-\hat{X}_{i}|^2]
+
\sE[|
{Y}_i
-\hat{Y}_{i}|^2]
\right)
+
\sum_{i=0}^{N-1}
\sE[
|\bar{Z}_{i}
-\hat{Z}_{i}|^2
]
]\tau_N,
\end{split}
\end{align}
 where 
$\bar{Z}_i\coloneqq \frac{1}{\tau_N}\sE_i\big[\int_{t_i}^{t_{i+1}} Z_s\, ds\big]$
for all $i\in \cN_{<N}$.
%
%
In the following,
we shall demonstrate that 
both $\err_\pi(\hat{X},\hat{Y},\hat{Z})$ and
$\err(\hat{X},\hat{Y},\hat{Z})$ can be effectively estimated  by the modulus of continuity $\ol{\om}$ in (H.\ref{assum:time_reg}), 
the {a posteriori} error estimator 
$\cE_\pi(\hat{X},\hat{Y},\hat{Z})$ defined as in \eqref{eq:error_estimator_intro}
and 
a measure of 
the time regularity of the solution $(X,Y,Z)$ defined 
 as follows: for any given grid $\pi_N$,
\begin{align}\l{eq:L2_Regularity_proj}
\begin{split}
&\cR_\pi({X},{Y},{Z})
\\
&\coloneqq 
\max_{i\in \cN_{<N}}
\max_{t\in [t_i,t_{i+1}]}
\left(
\sE[|X_t
-{X}_{i}|^2
]
+
\sE[|
{Y}_t
-{Y}_{i}|^2]
\right)
+
\sum_{i=0}^{N-1}
\sE\bigg[
\int_{t_i}^{t_{i+1}}
|{Z}_{t}
-\bar{Z}_{i}|^2
\,dt
\bigg].
\end{split}
\end{align}

\begin{Remark}\l{rmk:R}
The term  $\cR_\pi(X,Y,Z)$ 
is often referred to as the path regularity of 
$(X,Y,Z)$, 
and 
is essential for error estimates of numerical schemes for BSDEs 
(see 
\cite{zhang2004,lionnet2015}).
The fact that $(X,Y, Z)\in \cM^2(0,T;\sR^n\t \sR^m\t \sR^{m\t d })$
and the dominated convergence theorem show that 
 $\cR_\pi(X,Y,Z)$  tends to zero as the stepsize $\tau_N$ vanishes.
A rate of convergence of $\cR_\pi(X,Y,Z)$ can be obtained under further   structural assumptions.
In the case where $b$ and $\sigma$ are independent of $Z$ and $\sP_Z$, 
 \cite{reisinger2021} proves
 under (H.\ref{assum:fbsde_discrete_exp})-(H.\ref{assum:time_reg}) that 
 $\cR_\pi({X},{Y},{Z})=\cO(\tau_N)$ via    Malliavin calculus.
Alternatively, 
 suppose that 
there exists   $\cU : [0,T]\t \sR^n\t \cP_2(\sR^n)\to \sR^m$ 
and $\cV : [0,T]\t \sR^n\t \cP_2(\sR^n)\to \sR^{m\t d}$ 
satisfying the following properties
(see \cite{chassagneux2019}): 
\begin{itemize}
\item $Y_t=\cU(t,X_t,\sP_{X_t})$
and $Z_t=\cV(t,X_t,\sP_{X_t})$ for all $t\in [0,T]$,
\item 
 $\cU$ and $\cV$ are $1/2$-H\"{o}lder continuous in the time variable,
and are Lipschitz continuous in the spatial and measure variables.
\end{itemize}
The functions $U$ and $V$ are known as the decoupling fields for $Y$ and $Z$, respectively,
and allow rewriting \eqref{eq:fbsde_conts_fwd_intro}   as a McKean--Vlasov SDE with Lipschitz coefficients.
Then
  standard regularity estimates of MV-SDEs and the regularity of $U$ and $V$
give  $\cR_\pi({X},{Y},{Z})=\cO(\tau_N)$.

\end{Remark}

Now we   perform the   {a posteriori} error analysis  for  \eqref{eq:fbsde_conts_intro}.
The next proposition quantifies the time discretization error between \eqref{eq:MV-fbsde_exp} and \eqref{eq:fbsde_conts_intro},
whose  proof is  given in Appendix \ref{appendix:proof_continuous_error}.

\begin{Proposition}\l{thm:time_error}
Suppose (H.\ref{assum:fbsde_discrete_exp})-(H.\ref{assum:time_reg}) hold.
Let
$(X,Y, Z)\in \cM^2(0,T;\sR^n\t \sR^m\t \sR^{m\t d })$ be the solution to MV-FBSDE \eqref{eq:fbsde_conts_intro},
and 
for each $N\in \sN$, $i\in \cN_{<N}$
let $\bar{Z}_i=\frac{1}{\tau_N}\sE_i\big[\int_{t_i}^{t_{i+1}} Z_s\, ds\big]$.
Then 
there exists a constant $C>0$,\footnotemark such that 
 for  all sufficiently large $N\in \sN$,
\begin{align*}
&\max_{i\in \cN}\left(
\sE[|
X_i
-{X}^\pi_{i}|^2]
+
\sE[|
{Y}_i
-{Y}^\pi_{i}|^2]
\right)
+
\sum_{i=0}^{N-1}
\sE[
|\bar{Z}_{i}
-{Z}^\pi_{i}|^2
]\tau_N +\sE[|M^\pi_N|^2]
\\
&\le 
C\big(\ol{\om}(\tau_N)^2
+
\cR_\pi(X,Y,Z)
\big),
\end{align*}
where $(X^\pi,Y^\pi, Z^\pi, M^\pi)\in \cS_N$
is the solution to  \eqref{eq:MV-fbsde_exp} defined on $\pi_N$
(cf.~Theorem \ref{thm:wp_bwd_imp}),
$\ol{\om}$ is the modulus of continuity in (H.\ref{assum:time_reg}),
and $\cR_\pi(X,Y,Z)$ is defined as  in \eqref{eq:L2_Regularity_proj}.

\footnotetext{
{
See Remark \ref{rmk:constant_dependence}
for the dependence of the constant $C$ in the statements
of Proposition  \ref{thm:time_error}, 
   Theorem \ref{thm:a_posterior_conts}
   and Corollary \ref{cor:deep_bsde}.

}
}
\end{Proposition}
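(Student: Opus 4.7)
The plan is to apply Proposition \ref{thm:stab_exp} with $\lambda_0 = 1$, comparing $(X^\pi, Y^\pi, Z^\pi, M^\pi)$ (which solves \eqref{eq:MV-fbsde_exp_lambda} with vanishing perturbations and initial value $\xi_0$) against an auxiliary process $(\bar{X}, \bar{Y}, \bar{Z}, \bar{M}) \in \cS_N$ built from the continuous solution. I would take $\bar{Z}_i := \tfrac{1}{\tau_N}\sE_i\bigl[\int_{t_i}^{t_{i+1}} Z_s\,ds\bigr]$, the $L^2$-projection appearing in $\cR_\pi$, and set $\bar{X}_0 = \xi_0$, $\bar{Y}_0 = Y_0$, then define $(\bar{X}, \bar{Y}, \bar{M})$ recursively so that $(\bar{X}, \bar{Y}, \bar{Z}, \bar{M})$ satisfies \eqref{eq:MV-fbsde_exp_lambda} with $\lambda = 1$, the same generator $(b,\sigma,f,g)$, and explicit perturbations obtained by local projection, for instance $\bar{\gamma}_i := \tfrac{1}{\tau_N}\sE_i\bigl[\int_{t_i}^{t_{i+1}} (f(s,\Theta_s,\sP_{\Theta_s})-f(t_i,\bar{\Theta}_i,\sP_{\bar{\Theta}_i}))\,ds\bigr]$, analogous $\bar{\phi}_i, \bar{\psi}_i$ from $b$ and $\sigma$, and $\bar{\eta} := \bar{Y}_N - g(\bar{X}_N,\sP_{\bar{X}_N})$. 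The martingale $\bar{M}$ is chosen to absorb the remaining non-$\cF_{t_i}$-measurable parts of the continuous backward increment; it is constructed using the martingale representation of $W$ on the Brownian filtration followed by a Kunita--Watanabe decomposition to enforce strong orthogonality to $W$.

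To bound these perturbations, I would decompose each difference $\phi(s,\Theta_s,\sP_{\Theta_s})-\phi(t_i,\bar{\Theta}_i,\sP_{\bar{\Theta}_i})$ for $\phi \in \{b,\sigma,f\}$ into a time-increment controlled by $\bar{\om}(\tau_N)$ via (H.\ref{assum:time_reg}) and a state-increment controlled by $|\Theta_s-\bar{\Theta}_i|+\cW_2(\sP_{\Theta_s},\sP_{\bar{\Theta}_i})$ via (H.\ref{assum:fbsde_discrete_exp}\ref{item:lipschitz_exp}); summing squared $L^2$-norms over $i$, using $\cW_2^2(\sP_{\Theta_s},\sP_{\bar{\Theta}_i}) \le \sE[|\Theta_s-\bar{\Theta}_i|^2]$, and absorbing the residual $\max_i \sE[|X_{t_i}-\bar{X}_i|^2+|Y_{t_i}-\bar{Y}_i|^2]$ via a discrete Gr\"onwall argument on the recursion for $(\bar{X},\bar{Y})$, gives
\[
\sum_{i=0}^{N-1}\bigl(\sE[|\bar{\phi}_i|^2]+\sE[|\bar{\psi}_i|^2]+\sE[|\bar{\gamma}_i|^2]\bigr)\tau_N + \sE[|\bar{\eta}|^2] \le C\bigl(\bar{\om}(\tau_N)^2+\cR_\pi(X,Y,Z)\bigr),
\]
with the $\bar{\eta}$-term handled by Lipschitz continuity of $g$ and the Euler-type bound $\sE[|X_T-\bar{X}_N|^2] \le C(\bar{\om}(\tau_N)^2+\cR_\pi)$. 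Proposition \ref{thm:stab_exp} with $\lambda_0 = 1$, common generator, and common initial value $\xi_0$ then yields the same bound for $\max_i(\sE[|\bar{X}_i-X^\pi_i|^2]+\sE[|\bar{Y}_i-Y^\pi_i|^2]) + \sum_i \sE[|\bar{Z}_i-Z^\pi_i|^2]\tau_N + \sE[|\bar{M}_N-M^\pi_N|^2]$; a triangle inequality together with the forward-Euler estimates for $\sE[|X_{t_i}-\bar{X}_i|^2]$ and $\sE[|Y_{t_i}-\bar{Y}_i|^2]$ yields the conclusion, and $\sE[|M^\pi_N|^2]$ is controlled from $\bar{M}_0 = M^\pi_0 = 0$ together with $\sE[|\bar{M}_N|^2] \le C(\bar{\om}(\tau_N)^2+\cR_\pi)$.

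The hard part will be the simultaneous construction of $\bar{X}$ and $\bar{M}$: because the forward $X$-equation in \eqref{eq:MV-fbsde_exp_lambda} has no orthogonal-martingale slot, the non-$\cF_{t_i}$-measurable residuals from $\int_{t_i}^{t_{i+1}} b\,ds + \int_{t_i}^{t_{i+1}} \sigma\,dW_s$ cannot be directly inserted into $\bar{\phi}_i\tau_N + \bar{\psi}_i\Delta W_i$, so one must instead allow $\bar{X}_i$ to drift away from $X_{t_i}$ by small Euler-error amounts (handled by the Gr\"onwall step above) or route these residuals through $\bar{M}$ using the martingale representation of $W$. Likewise, enforcing that $\bar{M}$ is strongly orthogonal to $W$ (rather than merely a martingale) forces small corrections to $\bar{Z}_i$ that must be tracked through the projection estimate and shown to be absorbed by the path-regularity term $\cR_\pi$; the availability of martingale representations in the Brownian-filtration setting is what makes the program go through.
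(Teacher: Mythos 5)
Your strategy is sound but genuinely different from the paper's. You build a discrete interpolant $(\bar{X},\bar{Y},\bar{Z},\bar{M})\in\cS_N$ of the continuous solution that \emph{exactly} satisfies the perturbed system \eqref{eq:MV-fbsde_exp_lambda} with $\lambda=1$, bound the perturbations by $C(\ol{\om}(\tau_N)^2+\cR_\pi)$ (a consistency estimate), and then invoke Proposition \ref{thm:stab_exp} as a black box. The paper never constructs such an interpolant: instead it proves a bespoke analogue of Lemma \ref{lemma:stab_a_priori_exp} (Lemma \ref{lemma:a_prior_time_error}) directly for $\delta X=X-X^\pi$, $\delta Y=Y-Y^\pi$, $\delta Z=\bar{Z}-Z^\pi$, replaying the telescoping of $\Delta\la G\delta X,\delta Y\ra_i$ and the monotonicity condition with the continuous-time integrals left intact, and estimating the cross terms $\Sigma_i$ by It\^{o} isometry together with a $\tau_N^{\pm 1/2}$-weighted Young split so that the residuals are absorbed for large $N$; it then closes the argument with forward/backward Gr\"onwall bounds exactly as in the proof of Proposition \ref{thm:stab_exp}. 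What the paper's route buys is that it completely sidesteps the two obstructions you correctly flag: the forward increment $\int_{t_i}^{t_{i+1}}b\,ds+\int_{t_i}^{t_{i+1}}\sigma\,dW_s$ is generally not of the admissible form $a_i\tau_N+b_i\Delta W_i$ with $\cF_{t_i}$-measurable coefficients, and the strong orthogonality of $\bar{M}$ to $W$ forces the $\Delta W_i$-component of the backward increment to be carried by $\bar{Z}_i\Delta W_i$ alone. Your proposed fixes do work --- if $\Delta\bar{X}_i$ and $\Delta\bar{Y}_i$ are defined as the conditional $L^2$-projections of $\Delta X_{t_i}$, $\Delta Y_{t_i}$ onto $\mathrm{span}\{1,\Delta W_i\}$ over $\cF_{t_i}$, the discarded residuals are orthogonal martingale increments, so $\sE[|X_{t_k}-\bar{X}_k|^2]$ and $\sE[|Y_{t_k}-\bar{Y}_k|^2]$ equal sums of squared residuals bounded by $C(\ol{\om}(\tau_N)^2+\cR_\pi)$ without even needing Gr\"onwall, and the spurious $\Delta W_i$-component generated by $\int_{t_i}^{t_{i+1}}f\,ds$ is $O(\tau_N^{1/2})$ times a quantity controlled by $\cR_\pi$ after centering by an $\cF_{t_i}$-measurable value of $f$ --- but executing this bookkeeping is comparable in length to the paper's direct computation. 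One point to make explicit in your write-up: the perturbations you define depend on the interpolant itself through $\bar{\Theta}_i$, which is admissible since Proposition \ref{thm:stab_exp} only requires that the $4$-tuple satisfy the equation with \emph{some} adapted square-integrable perturbations (the paper uses the same device in the proof of Theorem \ref{thm:error_reliable_discrete_exp}), and you must evaluate the generator at $(X_{t_i},Y_{t_i},\bar{Z}_i)$ rather than $(X_{t_i},Y_{t_i},Z_{t_i})$ throughout, since $\cR_\pi$ controls only $\int_{t_i}^{t_{i+1}}\sE[|Z_t-\bar{Z}_i|^2]\,dt$.
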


Based on Proposition \ref{thm:time_error},
we prove the efficiency and reliability of \eqref{eq:error_estimator_intro}
for \eqref{eq:fbsde_conts_intro}.

\begin{Theorem}\l{thm:a_posterior_conts}
Suppose (H.\ref{assum:fbsde_discrete_exp})-(H.\ref{assum:time_reg}) hold.
Let
$(X,Y, Z)\in \cM^2(0,T;\sR^n\t \sR^m\t \sR^{m\t d })$ be the solution to MV-FBSDE \eqref{eq:fbsde_conts_intro}.
Then 
there exists a constant $C>0$, such that 
for  all sufficiently large $N\in \sN$
and 
for every 
triple  $(\hat{X},\hat{Y},\hat{Z})\in \cS^0_N$, 
\begin{align}
\err(\hat{X},\hat{Y},\hat{Z})
&\le 
C\big(\ol{\om}(\tau_N)^2
+
\cR_\pi(X,Y,Z)
+\cE_\pi(\hat{X},\hat{Y},\hat{Z})
\big),
\l{eq:efficient_T}
\\
\cE_\pi(\hat{X},\hat{Y},\hat{Z})
&\le 
C\big(\ol{\om}(\tau_N)^2
+
\cR_\pi(X,Y,Z)
+\err(\hat{X},\hat{Y},\hat{Z})
\big),
\l{eq:reliable_T}
\end{align}
where 
$\err(\hat{X},\hat{Y},\hat{Z})$ is 
defined as in \eqref{eq:error_T},
$\ol{\om}$ is the modulus of continuity in (H.\ref{assum:time_reg}),
 $\cR_\pi(X,Y,Z)$ is 
 defined as in \eqref{eq:L2_Regularity_proj},
 and $\cE_\pi(\hat{X},\hat{Y},\hat{Z})$ is 
 defined as in \eqref{eq:error_estimator_intro}.
Moreover, the same error estimates \eqref{eq:efficient_T} and \eqref{eq:reliable_T} also hold 
by replacing $\err(\hat{X},\hat{Y},\hat{Z})$
with $\err_\pi(\hat{X},\hat{Y},\hat{Z})$
 defined as in \eqref{eq:error_T_N}.
\end{Theorem}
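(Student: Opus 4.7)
The plan is to bridge the continuous solution $(X,Y,Z)$ and the given numerical approximation $(\hat X,\hat Y,\hat Z)\in\cS_N^0$ by passing through the intermediate Euler solution $(X^\pi,Y^\pi,Z^\pi,M^\pi)\in\cS_N$ of \eqref{eq:MV-fbsde_exp}, which exists uniquely for all sufficiently large $N$ by Theorem \ref{thm:wp_bwd_imp}. Identify $(\hat X,\hat Y,\hat Z)$ with $(\hat X,\hat Y,\hat Z,0)\in\cS_N$ so that the discrete a posteriori machinery of Section \ref{sec:a_posteriori_discrete} applies, and let $C$ denote a generic constant depending only on the data in (H.\ref{assum:fbsde_discrete_exp})--(H.\ref{assum:time_reg}).

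First I would record the following Pythagorean identity, which will be the key device for controlling the $Z$-terms:
\[
\sum_{i=0}^{N-1}\sE\!\int_{t_i}^{t_{i+1}}|Z_t-\hat Z_i|^2\,dt=\sum_{i=0}^{N-1}\sE\!\int_{t_i}^{t_{i+1}}|Z_t-\bar Z_i|^2\,dt+\sum_{i=0}^{N-1}\tau_N\sE[|\bar Z_i-\hat Z_i|^2],
\]
valid because $\bar Z_i$ is the $L^2$-projection of $Z$ onto $\cF_{t_i}$-measurable random variables on $[t_i,t_{i+1}]$, so the cross term vanishes by the tower property. Combining this with the triangle inequalities $\max_{t\in[t_i,t_{i+1}]}\sE[|X_t-\hat X_i|^2]\le 2\max_t\sE[|X_t-X_i|^2]+2\sE[|X_i-\hat X_i|^2]$ and its $Y$-analogue yields the two-sided relation
\[
\err_\pi(\hat X,\hat Y,\hat Z)\le\err(\hat X,\hat Y,\hat Z)\le C\bigl(\err_\pi(\hat X,\hat Y,\hat Z)+\cR_\pi(X,Y,Z)\bigr).
\]
Hence it suffices to prove both estimates with $\err_\pi$ in place of $\err$.

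For efficiency \eqref{eq:efficient_T}, I would apply the triangle inequality through $(X^\pi,Y^\pi,Z^\pi,M^\pi)$ in the $\cS_N$-norm to obtain $\err_\pi(\hat X,\hat Y,\hat Z)\le C\,D_\pi+C\,T_\pi$, where $D_\pi$ is the $\cS_N$-distance between $(\hat X,\hat Y,\hat Z,0)$ and $(X^\pi,Y^\pi,Z^\pi,M^\pi)$ and $T_\pi$ is the $\cS_N$-distance between $(X^\pi,Y^\pi,Z^\pi,M^\pi)$ and the sampled process $(X_i,Y_i,\bar Z_i,0)$. Theorem \ref{thm:error_reliable_discrete_exp} applied to $(\hat X,\hat Y,\hat Z,0)$ gives $D_\pi\le C\cE_\pi(\hat X,\hat Y,\hat Z)$, while Proposition \ref{thm:time_error} (which already contains $\sE[|M_N^\pi|^2]$ on the left) gives $T_\pi\le C(\ol\om(\tau_N)^2+\cR_\pi(X,Y,Z))$. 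For reliability \eqref{eq:reliable_T} I reverse the argument: Theorem \ref{thm:error_efficient_discrete_exp} yields $\cE_\pi(\hat X,\hat Y,\hat Z)\le C\,D_\pi$, and the same triangle inequality together with Proposition \ref{thm:time_error} shows $D_\pi\le C\,\err_\pi(\hat X,\hat Y,\hat Z)+C(\ol\om(\tau_N)^2+\cR_\pi(X,Y,Z))$. In both directions, the preliminary relation above then transfers the conclusion between $\err_\pi$ and $\err$.

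The main obstacle lies in the $Z$-component: the continuous error $\err$ involves the $L^2(dt)$-norm of $Z_t-\hat Z_i$, whereas both the Euler scheme and the a posteriori estimator only see the piecewise-constant quantities $Z_i^\pi$ and $\hat Z_i$. The Pythagorean identity in the first step is precisely what cleanly decouples the unavoidable path-regularity contribution $\cR_\pi(X,Y,Z)$ from the genuine discrete error, allowing each piece to be matched with a term available from the discrete theory (Theorems \ref{thm:error_efficient_discrete_exp}--\ref{thm:error_reliable_discrete_exp}) and from the Euler consistency estimate (Proposition \ref{thm:time_error}). A secondary care-point will be verifying that all multiplicative constants produced by these triangle inequalities depend only on the data in (H.\ref{assum:fbsde_discrete_exp})--(H.\ref{assum:time_reg}) and not on $N$ or on $(\hat X,\hat Y,\hat Z)$, which follows from the uniform dependence already built into the cited theorems.
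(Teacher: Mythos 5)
Your argument is correct in outline and, for the efficiency bound \eqref{eq:efficient_T}, coincides with the paper's proof: both reduce to $\err_\pi$ via $\err\le 2(\err_\pi+\cR_\pi)$ and then chain Theorem \ref{thm:error_reliable_discrete_exp} with Proposition \ref{thm:time_error} through the intermediate Euler solution. Where you genuinely diverge is the reliability bound \eqref{eq:reliable_T}: you reuse the discrete efficiency result (Theorem \ref{thm:error_efficient_discrete_exp}) together with Proposition \ref{thm:time_error} and a triangle inequality, which directly yields $\cE_\pi\le C(\ol{\om}(\tau_N)^2+\cR_\pi+\err_\pi)$, whereas the paper redoes the computation of Theorem \ref{thm:error_efficient_discrete_exp} in continuous time (expanding $\cE_\pi$ against \eqref{eq:fbsde_conts_intro}, with the It\^{o} isometry replacing the discrete martingale identities) and lands on the bound with $\err$ directly. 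Your route is more economical in that it avoids repeating that computation, and your Pythagorean identity for $\bar Z_i$ is exactly the right device for splitting the $Z$-error into $\cR_\pi$ plus the grid error. The one step you should tighten is the claimed inequality $\err_\pi\le\err$: the maximum in \eqref{eq:error_T_N} runs over $i\in\cN$, including the terminal values $\hat X_N,\hat Y_N$, which the quantity \eqref{eq:error_T} never evaluates (there the approximation is only sampled at left endpoints $i\in\cN_{<N}$), so the terminal term of $\err_\pi$ is not dominated by $\err$ as literally written; to transfer \eqref{eq:reliable_T} from $\err_\pi$ to $\err$ you must control $\sE[|X_N-\hat X_N|^2]+\sE[|Y_N-\hat Y_N|^2]$ separately (e.g.\ via the forward-consistency and terminal terms of $\cE_\pi$ itself, or by a direct continuous-time estimate as in the paper). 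To be fair, the paper's own direct proof of \eqref{eq:reliable_T} also quietly invokes $\max_{i\in\cN}\sE[|\delta X_i|^2]$, so this terminal-index point is a shared loose end rather than a defect specific to your argument; with it addressed, your proof is complete.
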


\begin{proof}
Throughout this proof, 
let $\pi_N$ be an arbitrary  fixed partition of $[0,T]$ 
with a sufficiently large $N$,
let $(X^\pi,Y^\pi, Z^\pi, M^\pi
)\in \cS_N$
be the solution to  \eqref{eq:MV-fbsde_exp} defined on $\pi_N$,
let  $(\delta \Theta, \delta X,\delta Y, \delta Z)= ( \hat{ \Theta}-\Theta, \hat{X}-X, \hat{Y}-Y, \hat{Z}-Z)$,
and for each $t\in [0,T]$, $\phi=b, \sigma, f$
let $ \hat{\phi}(t)= \phi(t, \hat{\Theta}_t,\sP_{\hat{\Theta}_t})$,
$\phi(t)=\phi(t, \Theta_t,\sP_{\Theta_t})$.
We also denote by $C$ a generic constant, 
which
depends only on the constants appearing in (H.\ref{assum:fbsde_discrete_exp}),
and  may take a different value at each occurrence.

Observe from the triangle inequality  that
$\err(\hat{X},\hat{Y},\hat{Z})\le 2(\err_\pi(\hat{X},\hat{Y},\hat{Z})+\cR_\pi(X,Y,Z))$.
Hence
 it suffices to prove \eqref{eq:efficient_T} for $\err_\pi(\hat{X},\hat{Y},\hat{Z})$ 
and \eqref{eq:reliable_T} for $\err(\hat{X},\hat{Y},\hat{Z})$.
The estimate \eqref{eq:efficient_T} for $\err_\pi(\hat{X},\hat{Y},\hat{Z})$  essentially follows by combining 
 Theorem \ref{thm:error_reliable_discrete_exp}
and Proposition \ref{thm:time_error}.
In fact, 
for all  sufficiently large $N\in \sN$,
\begin{align*}
\err_\pi(\hat{X},\hat{Y},\hat{Z})
&\le 
2\bigg[
\max_{i\in \cN}\left(
\sE[|
X_i
-{X}^\pi_{i}|^2]
+
\sE[|
{Y}_i
-{Y}^\pi_{i}|^2]
\right)
+
\sum_{i=0}^{N-1}
\sE[
|\bar{Z}_{i}
-{Z}^\pi_{i}|^2
]
]\tau_N
\\
&\quad +
\max_{i\in \cN}
\left(
\sE[|\hat{X}_{i}
-{X}^\pi_{i}|^2]
+
\sE[|\hat{Y}_{i}
-{Y}^\pi_{i}|^2]
\right)
+
\sum_{i=0}^{N-1}
\sE
[
|\hat{Z}_{i}
-{Z}^\pi_{i}|^2]\tau_N
\bigg]
\\
&
\le
C\big(\ol{\om}(\tau_N)^2
+
\cR_\pi(X,Y,Z)
+\cE_\pi(\hat{X},\hat{Y},\hat{Z})
\big).
\end{align*}
This proves the estimate \eqref{eq:efficient_T}.

We then establish the estimate \eqref{eq:reliable_T} for $\err(\hat{X},\hat{Y},\hat{Z})$
by following a similar argument as that for Theorem \ref{thm:error_efficient_discrete_exp}.
By using \eqref{eq:fbsde_conts_intro},
\begin{align*}
\begin{split}
\cE_\pi(\hat{X},\hat{Y},\hat{Z})
&=
\sE[|\delta {X}_0|^2]
+
\sE[|\delta {Y}_N-(g(\hat{X}_N,\sP_{\hat{X}_N})-g({X}_N,\sP_{{X}_N}))|^2]
\\
&\quad +
\max_{i\in \cN_{<N}}
\sE
\bigg[
\bigg|
\delta {X}_{i+1}
-\delta {X}_{0}
-\sum_{j=0}^i
\int_{t_j}^{t_{j+1}}
\bigg(
 \big(
\hat{b}(t_{j})-b(t)
 \big)\,dt  +
 \big(
 \hat{\sigma}(t_{j})-\sigma(t)
  \big)\,dW_t
  \bigg)
\bigg|^2
\bigg]\\
&\quad +
\max_{i\in \cN_{<N}}
\bigg[
\bigg|
\delta {Y}_{i+1}-\delta {Y}_0
+\sum_{j=0}^{i}
\int_{t_j}^{t_{j+1}}
\bigg(
\big(
 \hat{f}(t_{j})
 -{f}(t)
 \big)\,dt- 
 (\hat{Z}_j-Z_t)\,dW_t
 \bigg)
\bigg|^2
\bigg],
\end{split}
\end{align*}
which together with the Lipschitz continuity of $g$ implies that
\begin{align}
\begin{split}
\cE_\pi(\hat{X},\hat{Y},\hat{Z})
&\le
C\bigg(
\max_{i\in \cN}\sE[|\delta {X}_i|^2]
+\max_{i\in \cN}\sE[|\delta {Y}_i|^2]
 +
\max_{i\in \cN_{<N}}
(\sE[ |A_{i}|^2] +\sE[ |B_{i}|^2])
\bigg),
\end{split}
\end{align}
with the quantities $(A_i,B_i)_{i\in \cN_{<N}}$  defined by
\begin{align*}
A_i
&\coloneqq 
\sum_{j=0}^i
\int_{t_j}^{t_{j+1}}
\bigg(
 \big(
\hat{b}(t_{j})-b(t)
 \big)\,dt  +
 \big(
 \hat{\sigma}(t_{j})-\sigma(t)
  \big)\,dW_t
  \bigg),
\\
B_i
&\coloneqq 
\sum_{j=0}^{i}
\int_{t_j}^{t_{j+1}}
\bigg(
\big(
 \hat{f}(t_{j})
 -{f}(t)
 \big)\,dt- 
 (\hat{Z}_j-Z_t)\,dW_t
 \bigg).
\end{align*}
Then, by applying the Cauchy-Schwarz inequality, the It\^{o} isometry
and the Lipschitz continuity of the coefficients, we have for all $i\in \cN_{<N}$ that
\begin{align*}
\sE[|A_i|^2]
&\le
C
\sum_{j=0}^i
\int_{t_j}^{t_{j+1}}
\bigg(
 \sE[
|\hat{b}(t_{j})-b(t)|^2]+\sE[|\hat{\sigma}(t_{j})-\sigma(t)|^2]
  \bigg)\,dt 
  \\
 &\le
C
\sum_{j=0}^{N-1}
\int_{t_j}^{t_{j+1}}
\bigg(
\ol{\om}(\tau_N)^2+
\sE[|\Theta_t-\hat{\Theta}_{j}|^2]
  \bigg)\,dt 
 \\
 &\le  
 C\big(\ol{\om}(\tau_N)^2
+
\cR_\pi(X,Y,Z)
+\err(\hat{X},\hat{Y},\hat{Z})
\big).
\end{align*}
Similarly, for all $i\in \cN_{<N}$,
\begin{align*}
\sE[|B_i|^2]
&\le
C
\sum_{j=0}^i
\int_{t_j}^{t_{j+1}}
\bigg(
 \sE[
|\hat{f}(t_{j})-f(t)|^2]+\sE[|\hat{Z}_j-Z_t|^2]
  \bigg)\,dt 
  \\
 &\le
C
\sum_{j=0}^{N-1}
\int_{t_j}^{t_{j+1}}
\bigg(
\ol{\om}(\tau_N)^2+
 \sE[
|\Theta_t-\hat{\Theta}_{j}|^2]
  \bigg)\,dt 
 \\
 & \le
 C\big(\ol{\om}(\tau_N)^2
+
\cR_\pi(X,Y,Z)
+\err(\hat{X},\hat{Y},\hat{Z})
\big).
\end{align*}
Summarizing all the above estimates 
gives the desired upper bound  \eqref{eq:reliable_T}. 
\end{proof}

\begin{Remark}\l{eq:err_vanishing_R}
As already mentioned above, both
$\ol{\om}(\tau_N)$
and
$\cR_\pi(X,Y,Z)$
will vanish as the stepsize $\tau_N$ tends to zero,
and admit a first-order convergence rate under suitable structural conditions.
Hence 
the estimates \eqref{eq:efficient_T} and 
\eqref{eq:reliable_T} suggest that 
the  error estimator 
  \eqref{eq:error_estimator_intro}
effectively measures 
the   accuracy of given numerical solutions to \eqref{eq:fbsde_conts_intro},
including the performance of the chosen 
numerical procedure for approximating 
the conditional expectations, 
for all sufficiently small  stepsizes.
\end{Remark}

We end this section 
by applying Theorem \ref{thm:a_posterior_conts}
to study
the Deep BSDE Solver
proposed in \cite{carmona2019,  fouque2019,germain2019}
for solving  coupled MV-FBSDEs,
extended from the original algorithm for BSDEs in \cite{e2017}.
Roughly speaking,
for a given time grid $\pi_N$  of $[0,T]$ with stepsize $\tau_N=T/N$
and any given 
 measurable functions $y_0:\sR^n\to \sR^m$, $z_i:\sR^n\to \sR^{m\t d}$, $i\in \cN_{<N}$,
the Deep BSDE Solver 
generates the discrete approximation
$(\hat{X}_i,\hat{Y}_i,\hat{Z}_i)_{i\in \cN_{<N}}$ 
 by
following an explicit forward Euler scheme:
\begin{align}\label{eq:deep_fbsde_exp}
\begin{split}
\hat{X}_0&\coloneqq \xi_0, \q \hat{Y}_0\coloneqq y_0(\hat{X}_0), \q \hat{Z}_i\coloneqq z_i(\hat{X}_i) \q  \fa i\in \cN_{<N},
\\
\Delta \hat{X}_i&\coloneqq b(t_{i},\hat{X}_{i},\hat{Y}_{i}, \hat{Z}_{i},\sP_{(\hat{X}_{i},\hat{Y}_{i}, \hat{Z}_{i})})\tau_N  
+\sigma (t_{i},\hat{X}_{i},\hat{Y}_{i}, \hat{Z}_{i},\sP_{(\hat{X}_{i},\hat{Y}_{i}, \hat{Z}_{i})})\, \Delta W_i
\q \fa  i\in \cN_{<N},
\\
\Delta \hat{Y}_i&\coloneqq -f(t_{i},\hat{X}_{i},\hat{Y}_{i}, \hat{Z}_{i}, \sP_{(\hat{X}_{i},\hat{Y}_{i}, \hat{Z}_{i})})\tau_N+\hat{Z}_i\,\Delta W_i
\q \fa i\in \cN_{<N}.
\end{split}
\end{align}
The algorithm then seeks the optimal $(\hat{y}_0,\{\hat{z}_i\}_{i})$ by minimizing the following terminal loss:
$$
(\hat{y}_0,\{\hat{z}_i\}_{i})
\in \argmin_{(y_0,\{z_i\}_{i})\in \cC} \sE[|\hat{Y}_N-g(\hat{X}_N,\sP_{\hat{X}_N}))|^2]
\q \textnormal{with $\cC=\cY \t \bigtimes_{i=0}^{N-1}\cZ_i$},
$$
where
$\cY$ 
 is a  parametric family of measurable functions 
from $\sR^n$ to $\sR^m$
and
 $(\cZ_i)_{i\in \cN_{<N}}$
are parametric families of measurable functions 
from $\sR^n$ to $\sR^{m\t d}$.
Note that  for simplicity
we consider the exact law $\sP_{(\hat{X}_{i},\hat{Y}_{i}, \hat{Z}_{i})}$ in  \eqref{eq:deep_fbsde_exp}, 
which 
in practice will be estimated by
particle approximations (see, e.g.,~\cite{germain2019}).
In the subsequent analysis, 
we shall denote by 
$(\hat{X}^{y,z},\hat{Y}^{y,z},\hat{Z}^{y,z})$
the numerical solution generated by \eqref{eq:deep_fbsde_exp}
to emphasize the dependence on 
$(y_0,\{z_i\}_{i})\in \cC$.

The following corollary 
shows that the approximation accuracy  of the  Deep BSDE Solver
can be measured by the terminal loss,
which extends Theorem 1  in \cite{han2018} to fully coupled MV-FBSDEs.
\begin{Corollary}\l{cor:deep_bsde}
Suppose (H.\ref{assum:fbsde_discrete_exp})-(H.\ref{assum:time_reg}) hold,
and the functions in $\cC$ are of linear growth.
Let
$(X,Y, Z)\in \cM^2(0,T;\sR^n\t \sR^m\t \sR^{m\t d })$ be the solution to  \eqref{eq:fbsde_conts_intro}.
Then 
there exists a constant $C>0$ such that 
it holds for  all sufficiently large $N\in \sN$
and 
for all 
$(y_0,\{z_i\}_{i})\in \cC$
that
\begin{align}\l{eq:efficient_T_deep}
\err(\hat{X}^{y,z},\hat{Y}^{y,z},\hat{Z}^{y,z})
\le 
C\big(\ol{\om}(\tau_N)^2
+
\cR_\pi(X,Y,Z)
+\sE[|\hat{Y}^{y,z}_N-g(\hat{X}^{y,z}_N,\sP_{\hat{X}^{y,z}_N})|^2]
\big),
\end{align}
where 
$\err(\hat{X}^{y,z},\hat{Y}^{y,z},\hat{Z}^{y,z})$ is 
defined as in \eqref{eq:error_T},
$\ol{\om}$ is the modulus of continuity in (H.\ref{assum:time_reg})
and $\cR_\pi(X,Y,Z)$ is 
 defined as in \eqref{eq:L2_Regularity_proj}.
\end{Corollary}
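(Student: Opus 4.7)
The plan is to recognize that Corollary \ref{cor:deep_bsde} is a direct consequence of the reliability estimate \eqref{eq:efficient_T} from Theorem \ref{thm:a_posterior_conts}, once one observes that the forward Euler structure of the Deep BSDE Solver \eqref{eq:deep_fbsde_exp} causes most terms in the a posteriori error estimator $\cE_\pi$ to vanish identically.

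First I would verify that for every $(y_0,\{z_i\}_i)\in\cC$ the generated approximation $(\hat{X}^{y,z},\hat{Y}^{y,z},\hat{Z}^{y,z})$ belongs to $\cS^0_N$; this amounts to checking piecewise-constant square integrability on the grid. Since the functions in $\cC$ are of linear growth, $\xi_0\in L^2$, and $b,\sigma,f$ are Lipschitz in $(x,y,z,\mu)$ by (H.\ref{assum:fbsde_discrete_exp}\ref{item:lipschitz_exp}), a straightforward induction over $i\in\cN$ using the recursion \eqref{eq:deep_fbsde_exp} and the identity $\sE_i[\Delta W_i(\Delta W_i)^*]=\tau_N\sI_d$ yields $\sE[|\hat{X}^{y,z}_i|^2+|\hat{Y}^{y,z}_i|^2+|\hat{Z}^{y,z}_i|^2]<\infty$ for all $i$.

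Next I would plug $(\hat{X}^{y,z},\hat{Y}^{y,z},\hat{Z}^{y,z})$ directly into the definition \eqref{eq:error_estimator_intro} of $\cE_\pi$. The initialization $\hat{X}^{y,z}_0=\xi_0$ makes the first term vanish. Telescoping the forward recursion for $\hat{X}^{y,z}$ shows
\[
\hat{X}^{y,z}_{i+1}-\hat{X}^{y,z}_0=\sum_{j=0}^i\bigl(b(t_j,\hat{\Theta}^{y,z}_j,\sP_{\hat{\Theta}^{y,z}_j})\tau_N+\sigma(t_j,\hat{\Theta}^{y,z}_j,\sP_{\hat{\Theta}^{y,z}_j})\Delta W_j\bigr),
\]
so the third term vanishes identically; an analogous telescoping of the recursion for $\hat{Y}^{y,z}$ kills the fourth term. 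Only the terminal residual survives, giving
\[
\cE_\pi(\hat{X}^{y,z},\hat{Y}^{y,z},\hat{Z}^{y,z})=\sE\bigl[|\hat{Y}^{y,z}_N-g(\hat{X}^{y,z}_N,\sP_{\hat{X}^{y,z}_N})|^2\bigr].
\]

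Finally, since all hypotheses of Theorem \ref{thm:a_posterior_conts} are in force, applying the efficiency bound \eqref{eq:efficient_T} with $(\hat{X},\hat{Y},\hat{Z})=(\hat{X}^{y,z},\hat{Y}^{y,z},\hat{Z}^{y,z})$ and substituting the above identity for $\cE_\pi$ yields \eqref{eq:efficient_T_deep} with the constant $C$ inherited from Theorem \ref{thm:a_posterior_conts}, valid for all sufficiently large $N$. There is no real obstacle here; the only point requiring care is making sure the linear growth assumption on $\cC$ together with Lipschitzness of the generator actually delivers $(\hat{X}^{y,z},\hat{Y}^{y,z},\hat{Z}^{y,z})\in\cS^0_N$ so that Theorem \ref{thm:a_posterior_conts} is applicable, which is a routine $L^2$ induction.
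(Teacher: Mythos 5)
Your proposal is correct and follows essentially the same route as the paper: verify $(\hat{X}^{y,z},\hat{Y}^{y,z},\hat{Z}^{y,z})\in\cS^0_N$ via the linear growth of $\cC$ together with (H.\ref{assum:fbsde_discrete_exp}\ref{item:lipschitz_exp})--(H.\ref{assum:fbsde_discrete_exp}\ref{item:integrable_exp}), observe that the forward Euler structure of \eqref{eq:deep_fbsde_exp} reduces $\cE_\pi$ to the terminal loss, and then apply \eqref{eq:efficient_T} from Theorem \ref{thm:a_posterior_conts}. Your telescoping argument simply makes explicit what the paper states in one line.
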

\begin{proof}
Note that 
 the linear growth of $(y_0,\{z_i\}_i)\in \cC$,
 (H.\ref{assum:fbsde_discrete_exp}\ref{item:lipschitz_exp}),
 (H.\ref{assum:fbsde_discrete_exp}\ref{item:integrable_exp}), 
and \eqref{eq:deep_fbsde_exp} 
imply that 
$(\hat{X}^{y,z},\hat{Y}^{y,z},\hat{Z}^{y,z})\in \cS_N^0$
and $\sE[|\hat{Y}^{y,z}_N-g(\hat{X}^{y,z}_N,\sP_{\hat{X}^{y,z}_N})|^2]=\cE_\pi(\hat{X}^{y,z},\hat{Y}^{y,z},\hat{Z}^{y,z})$,
which enable us to conclude \eqref{eq:efficient_T_deep} from
 \eqref{eq:efficient_T} in Theorem \ref{thm:a_posterior_conts}.
\end{proof}

\begin{Remark}
One can further control the terminal loss by using the approximation accuracy of 
$(y_0,\{z_i\}_i)\in \cC$,
which is important for the convergence analysis of the Deep BSDE Solver.
In fact, 
for any given $(y_0,\{z_i\}_i)\in \cC$, by viewing  \eqref{eq:MV-fbsde_exp} and   \eqref{eq:deep_fbsde_exp} as  explicit forward Euler schemes, 
we can deduce from  (H.\ref{assum:fbsde_discrete_exp}\ref{item:lipschitz_exp}), 
 Gronwall's inequality  {
 (see  Lemma \ref{lemma:gronwall})}
 and $\hat{X}^{y,z}_{0}
={X}^\pi_{0}=\xi_0$ 
that
\begin{align*}
\begin{split}
&
\max_{i\in \cN}
\left(
\sE[|\hat{X}^{y,z}_{i}
-{X}^\pi_{i}|^2]
+
\sE[|\hat{Y}^{y,z}_{i}
-{Y}^\pi_{i}|^2]
\right)
\le 
C
\bigg\{
\sE[|\hat{Y}^{y,z}_{0}
-{Y}^\pi_{0}|^2]
+
\sum_{i=0}^{N-1}
\sE
[
|\hat{Z}^{y,z}_{i}
-{Z}^\pi_{i}|^2]\tau_N +\sE[|M^\pi_N|^2]
\bigg\},
\end{split}
\end{align*}
where $(X^\pi,Y^\pi, Z^\pi, M^\pi)\in \cS_N$
solves   \eqref{eq:MV-fbsde_exp}
 on $\pi_N$. 
Hence,  by   Theorem \ref{thm:error_efficient_discrete_exp} and Proposition \ref{thm:time_error},
\begin{align*}
\begin{split}
&
\inf_{(y_0,\{z_i\}_i)\in \cC}
\sE[|\hat{Y}^{y,z}_N-g(\hat{X}^{y,z}_N,\sP_{\hat{X}^{y,z}_N})|^2]
=
\inf_{(y_0,\{z_i\}_i)\in \cC}
\cE_\pi(\hat{X}^{y,z},\hat{Y}^{y,z},\hat{Z}^{y,z})
\\
&\le 
C
\bigg(
\ol{\om}(\tau_N)^2
+
\cR_\pi(X,Y,Z)
+
\inf_{(y_0,\{z_i\}_i)\in \cC}
\bigg\{\sE[|\hat{Y}^{y,z}_{0}
-{Y}_{0}|^2]
+
\sum_{i=0}^{N-1}
\sE
[
|\hat{Z}^{y,z}_{i}
-\bar{Z}_{i}|^2]\tau_N
\bigg\}
\bigg)
\end{split}
\end{align*}
with $\bar{Z}_i=\frac{1}{\tau_N}\sE_i\big[\int_{t_i}^{t_{i+1}} Z_s\, ds\big]$
for all $i\in \cN_{<N}$.
The above estimate and Corollary  \ref{cor:deep_bsde} suggest that
 to show the convergence of the Deep BSDE Solver, 
it remains to show the trial space $\cC$ is large enough 
such that 
$\{(y_0(\hat{X}^{y,z}_0), \{z_i(\hat{X}^{y,z}_{i})\}_i)\mid (y_0,\{z_i\}_i)\in\cC\}$
can approximate $({Y}_{0},\{\bar{Z}_{i}\}_i)$ arbitrarily well  
(up to a time discretization error).
A complete  analysis of  this issue
for the coupled MV-FBSDE \eqref{eq:fbsde_conts_intro}
  requires a careful analysis of the decoupling fields  
and the nonlinear mapping $(y_0,\{z_i\}_i)\mapsto \hat{X}^{y,z}_{i}$,
and 
 is 
  left to future
research.

\end{Remark}



\section{Numerical experiments}\l{sec:numerical}

In this section, we illustrate the theoretical findings and demonstrate the effectiveness of the 
{a posteriori} error estimator 
 through numerical experiments. {
 We  present a one-dimensional linear MV-FBSDE example in Section \ref{sec:one_dimensional} and     multidimensional 
linear and nonlinear MV-FBSDE 
 examples in Section  \ref{sec:multi-dimensional}.
 }
 
\subsection{One-dimensional linear MV-FBSDE} 
\label{sec:one_dimensional}
 
We shall study the following linear coupled MV-FBSDE as in \cite{carmona2018a,andrea2019}:
\begin{subequations}\l{eq:fbsde_lq}
\begin{alignat}{2}
&d X_t = -\frac{1}{c_{\alpha}} Y_t \, dt + \sigma \, dW_t, 
\q t\in [0,T]; 
\qq && X_0=x_0,
\l{eq:fbsde_lq_fwd}
\\
& dY_t = -\bigg( c_x X_t + \frac{\bar{h}}{c_{\alpha}}\mathbb{E}[Y_t] \bigg) \, dt + Z_t \, d W_t,  
\q t\in [0,T];
\l{eq:fbsde_lq_bwd}
\qq 
&& Y_T = c_g X_T,
\end{alignat}
\end{subequations}
where  $x_0, T, c_{\alpha}, \sigma,  c_x , \bar{h}>0$ are some given constants
and $W=(W_t)_{t\in [0,T]}$ is a one-dimensional Brownian motion. 
This equation 
 arises from applying the Pontryagin approach
 to   a  linear-quadratic mean field game,
in which the representative agent interacts with the law of the control  instead of the law of their state.
Such a model has been used in studying  optimal execution problems for high frequency trading,
where $\bar{h}$ represents the impact of a trading strategy on the market price
and $c_\a$ represents the  cost of trading
(see Section 4.4.2 of  \cite{andrea2019}
for  interpretations of the remaining parameters).
One can easily check by using Young's inequality that 
 if the  parameters in \eqref{eq:fbsde_lq} satisfy the relation that 
$-c_x+\bar{h}^2/(4c_\a)<0$, 
then
 \eqref{eq:fbsde_lq} satisfies
 (H.\ref{assum:fbsde_discrete_exp})
with 
 $G=1$, 
 $\a=c_g$,
 $\b_1=0$
and
$\b_2=c_x-\bar{h}^2/(4c_\a)$
in (H.\ref{assum:fbsde_discrete_exp}\ref{item:monotone_exp}). {
The condition  (H.\ref{assum:time_reg}) is clearly satisfied as all coefficients are constant in the time variable.}

The linearity  of the equation implies that the decoupling field of the process $Y$ is affine,
in the sense that 
there exist deterministic functions
$(\eta_t)_{0 \leq t \leq T}$ and $(\xi_t)_{0 \leq t \leq T}$ such that 
$Y_t = \eta_t X_t + \xi_t$ for all $t\in [0,T]$.
Choosing this ansatz for the decoupling field of $Y$  and solving a system of ODEs for $\mathbb{E}[X_t]$ and $\mathbb{E}[Y_t]$ (see pages 310--312 in \cite{carmona2018a} for details on these computations), 
we can obtain for all $t\in [0,T]$ that  
\begin{align}\l{eq:decouple_lq}
\begin{split}
& \eta_t = - c_{\alpha} \sqrt{c_x/ c_{\alpha}} \frac{c_{\alpha} \sqrt{c_x/ c_{\alpha}} - c_g - (c_{\alpha} \sqrt{c_x/ c_{\alpha}} + c_g)e^{2 \sqrt{c_x/ c_{\alpha}}(T-t)}}{c_{\alpha} \sqrt{c_x/ c_{\alpha}} - c_g + (c_{\alpha} \sqrt{c_x/ c_{\alpha}} + c_g)e^{2 \sqrt{c_x/ c_{\alpha}}(T-t)}}, \\
& \xi_t = \frac{\bar{h}}{c_{\alpha}} \int_{t}^{T} \mathbb{E}[Y_s] e^{-\frac{1}{c_{\alpha}} \int_{t}^{s} \eta_u \, {d}u} \, {d}s.
\end{split}
\end{align}
The mean of $Y_t$ can  also be explicitly  expressed as
$\mathbb{E}[Y_t] = x_0 \bar{\eta}_t e^{- \frac{1}{c_{\alpha}}\int_{0}^{t} \bar{\eta}_u \, {d}u}$
for all $t \in [0,T]$,
where
\begin{align*}
& \bar{\eta}_t = \frac{-C \left( e^{(\delta^{+} - \delta^{-})(T-t)} -1 \right) - c_g \left( \delta^{+} e^{(\delta^{+} - \delta^{-})(T-t)} - \delta^{-} \right) }{\left( \delta^{-} e^{(\delta^{+} - \delta^{-})(T-t)} - \delta^{+} \right) - c_g B\left(e^{(\delta^{+} - \delta^{-})(T-t)} - 1 \right)}
\end{align*}
for $t\in [0,T]$,
$B={1}/{c_{\alpha}}$, $C=C_x$ and $\delta^{\pm}= -D \pm \sqrt{D^2+BC}$
with $D=-{\bar{h}}/{(2 c_{\alpha})}$.
Applying  It\^{o}'s formula to the decoupling field of $Y$ 
further implies that the process $Z$ is  deterministic and can be expressed as 
$Z_t = \sigma \eta_t$ for all $t\in [0,T]$. 
These  explicit expressions of the decoupling fields for $Y$ and $Z$ allow us  to compare the exact squared $L^2$-error of  a given numerical solution with the {a posteriori} error estimator, both qualitatively and quantitatively. 

To obtain numerical approximations of the solution triple $(X,Y,Z)$, we shall employ
a hybrid scheme consisting of  
 Picard iterations for the decoupling field of $Y$,
an explicit forward Euler discretization of \eqref{eq:fbsde_lq_fwd},
an explicit backward Euler discretization of \eqref{eq:fbsde_lq_bwd}
 and the least-squares Monte Carlo approximation of conditional expectations (see e.g.~\cite{gobet2006}),
 which is similar to  the Markovian iteration scheme proposed  in \cite{bender2008}
 for solving 
weakly coupled FBSDEs without mean field interaction.

We now briefly outline the main steps of the numerical procedure 
for the reader's convenience.
Let $N\in \sN$, $K\in \sN$,
$\pi_N=\{t_i \}_{i=0}^N$ be
 the uniform  partition of $[0,T]$
 with stepsize $\tau_N=T/N$,
 $\gamma = \{\gamma_k\}_{k=1}^K$
 be a set of basis functions on $\sR$
 and $P\in \sN$ be the number of Picard iterations.
We shall seek the following approximate decoupling fields  on $\pi_N$:
 $$
 \hat{y}^P_i\coloneqq ( \hat{\a}^P_i)^*\gamma :\sR\to \sR,
 \q 
 \hat{z}^P_i\coloneqq (\hat{\b}^P_i)^*\gamma: \sR\to \sR,
\q i=0,\ldots, N-1,
$$
where for each $i$,
$\hat{\a}^P_i, \hat{\b}^P_i\in \sR^K$ 
are some unknown deterministic weights to be determined.
After determining the weights
$(\hat{\a}^P_i,\hat{\b}^P_i)_{i=0}^{N-1}$,
we  define the  approximation
$(\hat{X}^P,\hat{Y}^P,\hat{Z}^P)$
of the solution triple 
$(X, Y,Z)$ as follows: $\hat{X}^P_0=x_0$,
\bb\l{eq:X^P}
\hat{X}^P_{i+1} \coloneqq \hat{X}^P_i -\frac{1}{c_{\alpha}} \hat{y}^P_i(\hat{X}^P_i)\tau_N + \sigma \Delta W_i,
\q i=0,\ldots, N-1,
\ee
$\hat{Y}^P_N\coloneqq c_g\hat{X}^P_N$
and 
for each $i=0,\ldots, N-1$, 
$\hat{Y}^P_i\coloneqq \hat{y}^P_i(\hat{X}^P_i)$
and 
$\hat{Z}^P_i\coloneqq \hat{z}^P_i(\hat{X}^P_i)$.
For the present one-dimensional case, we choose for simplicity  a set of local basis functions
which are indicators  of disjoint   partitions of a chosen computational domain
$[x_{\min}, x_{\max}]$:
we set for each $K \geq 3$ that
\begin{align*}
& \gamma_1(x) = \mathbf{1}_{( -\infty, x_{\min} )}(x), \qquad \gamma_K(x) = \mathbf{1}_{ [x_{\max},\infty) }(x) \\
& \gamma_{k+1}(x) = \mathbf{1}_{  [x_{\min}+(x_{\max}-x_{\min})\frac{k-1}{K-2}, x_{\min}+(x_{\max}-x_{\min})\frac{k}{K-2}) }(x), \qquad k=1, \ldots, K-2.
\end{align*}

To compute the   weights
$(\hat{\a}^P_i,\hat{\b}^P_i)_{i=0}^{N-1}$, we shall employ  Picard iterations with  least-squares Monte Carlo regression,
 starting with an initial guess 
 $(\hat{\a}^{0}_i)_{i=0}^{N-1}\in \sR^{KN}$
 of the weights  for $Y$.
Let  $p\in \{1,\ldots, P\}$.
We 
assume the 
approximate decoupling field of $Y$ for the $(p-1)$-th Picard iteration 
has been determined by $\hat{y}^{p-1}=( \hat{\a}^{p-1})^*\gamma$
and
 consider the $p$-th Picard iteration.  
For each $i\in \{0,\ldots, N-1\}$,
let 
$(\Delta W_{i}^{\lambda})_{\lam=1}^\Lambda$
be a  family of independent copies of the Brownian increment $\Delta W_{i}$.
We shall first generate 
$\hat{X}^{p-1,\Lambda}$ by 
following \eqref{eq:X^P}
with 
the decoupling fields
$(\hat{y}^{p-1}_i)_{i=0}^{N-1}$ and 
the increments
$(\Delta W_{i}^{\lambda})_{i,\lam}$:
$\hat{X}^{p-1,\lambda}_0=x_0$,
\bb\l{eq:X^P-1}
\hat{X}^{p-1,\lambda}_{i+1} \coloneqq \hat{X}^{p-1,\lambda}_i -\frac{1}{c_{\alpha}} \hat{y}^{p-1}_i(\hat{X}^{p-1,\lambda}_i)\tau_N + \sigma \Delta W^\lambda_i,
\q i=0,\ldots, N-1,\,
\lambda=1,\ldots, \Lambda,
\ee 
and then employ  a backward pass to update the weights 
$(\hat{\a}^{p}_i,\hat{\b}^{p}_i)_{i=0}^{N-1}$:
set $ \hat{y}^p_N(x)=c_gx$ for all $x\in \sR$, and for all $i=N-1,\ldots, 0$, let
\begin{align*}
& \hat{\b}^p_i \in \argmin_{ \b \in \mathbb{R}^K} \sum_{\lambda = 1}^{\Lambda} \left | \frac{\Delta W^{\lambda}_{i}}{\tau_N} \hat{y}^{p}_{i+1}(\hat{X}^{p-1,\lambda}_{i+1}) - \b^*\gamma(\hat{X}^{p-1,\lambda}_{i})  \right |^2, \nonumber \\
&\hat{ \alpha}^p_i \in  
\argmin_{ \alpha \in \mathbb{R}^K} \sum_{\lambda = 1}^{\Lambda} \left |\hat{y}^{p}_{i+1}(\hat{X}^{p-1,\lambda}_{i+1}) 
+ \tau_N
\bigg(
c_x \hat{X}^{p-1,\lambda}_{i}
+\frac{\bar{h}}{c_{\alpha}} 
\frac{1}{\Lambda}\sum_{\lambda=1}^\Lambda
\hat{y}^{p}_{i+1}(\hat{X}^{p-1,\lambda}_{i+1})
\bigg)
%
 - \alpha^*\gamma(\hat{X}^{p-1,\lambda}_{i})  \right |^2, \nonumber \\
& \hat{y}^{p}_i \coloneqq (\hat{ \alpha}^p_i)^*\gamma,\q 
 \hat{z}^{p}_{i}\coloneqq (\hat{\b}^p_i)^*\gamma, 
\end{align*}
 where 
we have taken  
a backward implicit discretization for  $X_t$
and also
replaced $\sE[Y_t]$ in \eqref{eq:fbsde_lq_bwd}
by the empirical mean
 in the updating scheme for $ \hat{ \alpha}^p_i$.
This procedure is repeated until the last Picard step (with $p=P$),
which determines the numerical solution 
$(\hat{X}^P,\hat{Y}^P,\hat{Z}^P)$ as  in \eqref{eq:X^P}.

%
 The error of the above hybrid scheme depends on the number of Picard iterations $P$, the number of time steps $N$, the number of basis functions $K$ and the sample size $\Lambda$. We are not aware of any published \emph{a priori} error estimates for solutions to \eqref{eq:fbsde_lq}, and even if they were available, they would almost certainly not be able capture the complicated dependence on these numerical parameters in a sharp enough way so as to give a complete, practically useful guide on choosing computationally efficient parameter combinations. 
 In contrast,  as we shall see shortly, 
 the proposed error estimator \eqref{eq:error_estimator_intro} gives a very accurate prediction of the true approximation error of a given numerical solution,
 which provides a guidance on the choices of these discretization parameters.
Note that, thanks to the explicit expressions of the true decoupling fields \eqref{eq:decouple_lq}, 
we can express the squared approximation error of a given numerical solution $(\hat{X}^P,\hat{Y}^P,\hat{Z}^P)$
on the grid as
\begin{align}
& \max_{0 \leq i \leq N} 
\big( \mathbb{E}[|X_{i}-\hat{X}^{P}_i|^2] + \mathbb{E}[|Y_{i}-\hat{Y}^{P}_i|^2]
\big) + \sum_{i=0}^{N-1}   \mathbb{E}[|Z_{i}-\hat{Z}^{P}_i|^2] \tau_N 
\nb\\
& = \max_{0 \leq i \leq N} \big( \mathbb{E}[|X^{\textrm{MS}}_{i}-\hat{X}^{P}_i|^2] + \mathbb{E}[|\eta_{t_i}X^{\textrm{MS}}_{i} + \xi_{t_i} -\hat{Y}^{P}_i|^2]\big) + \sum_{i=0}^{N-1}  \mathbb{E}[|\sigma \eta_i-\hat{Z}^{P}_i|^2]\tau_N + \mathcal{O}(N^{-2}),
\label{eq:TrueError}
\end{align} 
where $X^{\textrm{MS}}$ is an approximation of $X$  obtained by using an explicit Euler scheme 
of \eqref{eq:fbsde_lq_fwd}
(which coincides  with the Milstein scheme here)
with the drift term $ -\frac{1}{c_{\alpha}} Y_t=-\frac{1}{c_{\alpha}} (\eta_tX_t + \xi_t)$, $t\in [0,T]$.  
On the other hand, 
for a numerical solution $(\hat{X}^P,\hat{Y}^P,\hat{Z}^P)$
generated by the above hybrid scheme on a grid $\pi_N$,
the {a posteriori} error estimator \eqref{eq:error_estimator_intro}
will simplify to 
\begin{align}\label{eq:errorEstimate} 
\cE_\pi(\hat{X}^{P},\hat{Y}^{P},\hat{Z}^{P}) = \max_{0\le i\le N-1}
\sE\bigg[
\bigg|
\hat{Y}^{P}_{i+1}-\hat{Y}^{P}_0
+\sum_{j=0}^{i}
\left(
\bigg( c_x \hat{X}^P_j+ \frac{\bar{h}}{c_{\alpha}}\mathbb{E}[\hat{Y}^{P}_j] \bigg)\tau_N- { \hat{Z}^{P}_j} \,\Delta W_j
\right)
\bigg|^2
\bigg],
\end{align}   
which will
 be used to examine the approximation accuracy without using explicit knowledge of the exact decoupling fields of $Y$ and $Z$.

For our numerical experiments,
we set 
the model parameters as 
$x_0=1$, $T=1$,
$c_{\alpha} = 10/3$, $\sigma = 0.7$, $c_x = 2$, $\bar{h}=2$, $x_0=1$
and $c_g=0.3$ as in \cite{carmona2018a}
(note that  these parameters satisfy 
$-c_x+\bar{h}^2/(4c_\a)<0$
and hence 
 (H.\ref{assum:fbsde_discrete_exp}) holds).
We will also
examine the robustness of the estimator  \eqref{eq:errorEstimate} 
by 
fixing the   parameters 
$(x_0, T, \sigma,c_x, \bar{h},c_g)$
and
 increasing the coupling parameter $1/c_\a$,
 whose values will be specified later.
Since the forward equation starts with $x_0=1>0$, we shall implement the above hybrid  scheme
with
the computational domain 
$[x_{\min}, x_{\max}]=[0,2]$
and
 the following choices of  
$N,K,\Lambda$
 as suggested in
\cite{bender2013}:
\begin{align}\l{eq:NKLambda}
N = \left[2 \sqrt{2}^{j-1} \right], \qquad K = \max \left \lbrace \left \lceil \sqrt{2}^{j-1} \right  \rceil, 3 \right \rbrace, \qquad \Lambda = \left[2 \sqrt{2}^{l(j-1)} \right]
\end{align} 
for $j=2, \ldots, 9$ and $l=3,4,5$,
where $[x]$ is the nearest integer to $x\in \sR$ and $\lceil x \rceil$ is the  smallest integer not less than $x\in \sR$.
We choose for simplicity
the initial guess 
 $(\hat{\a}^{0}_i)_{i=0}^{N-1}$
 of the decoupling field
 to be the constant matrix $1/K$ for each $K$,
 and specify the number of Picard iterations  $P$ later,
 which will
  depend on the value of the coupling parameter $1/c_\a$.

To evaluate \eqref{eq:TrueError} and \eqref{eq:errorEstimate}
for a given numerical solution $(\hat{X}^P,\hat{Y}^P,\hat{Z}^P)$,
represented by the approximate decoupling fields, 
 we shall 
 simultaneously 
generate $10^4$ independent sample paths of $X^{\textrm{MS}}$ and $\hat{X}^P$,
and replace the expectations in  \eqref{eq:TrueError} and \eqref{eq:errorEstimate}
by  empirical  means over these sample paths.\footnotemark
\footnotetext{
Note that the mean field term 
$\mathbb{E}[\hat{Y}^{P}_j]$
appearing in the estimator \eqref{eq:errorEstimate} 
will  also be replaced by an empirical mean based on these forward simulations of $\hat{X}^P$,
which, strictly speaking, implies that 
 \eqref{eq:errorEstimate} is estimated based on
  $10^4$   identically distributed but  non-independent realizations 
  (also known as  an interacting  particle system of size $10^4$).
It is possible to recover the independence assumption of the  law of large numbers,
by further simulating multiple  independent realizations of such particle systems (each of size $10^4$) and 
then estimating the outer expectation in  \eqref{eq:errorEstimate}
 via an empirical average over these  independent realizations (see, e.g.,~\cite{hajiali2018}).
 However,
our experiments show that
 for such a large number of sample paths,
one  realization of the particle system is sufficient
to   evaluate 
\eqref{eq:errorEstimate} accurately,
since different independent realizations of the particle estimators
 usually lead to 
 negligible variances compared to other discretization errors,
 which can be explained by the well-known
``propagation of chaos" phenomenon (see, e.g.,~\cite{carmona2018a}).
For example, for the numerical solution obtained with
$c_a=10/3$,
 $j=9$ and $l=5$,
64  independent realizations of the particle estimator   (each of size $10^4$)
estimate the squared $L^2$-error to be 0.07
with a variance  of  magnitude $10^{-6}$.
}
We  remark that 
on the basis of our experiments, $10^4$ sample paths  seem to be sufficiently large 
for an accurate evaluation of \eqref{eq:TrueError} and \eqref{eq:errorEstimate}, 
since  further increasing the number of sample paths  results in negligible differences in the estimated values.
All computations are performed using \textsc{Matlab} R2019b 
on a 2.30GHz Intel Xeon Gold 6140  processor.

\begin{figure}[!h]
\centering
\includegraphics[height=7cm,keepaspectratio]{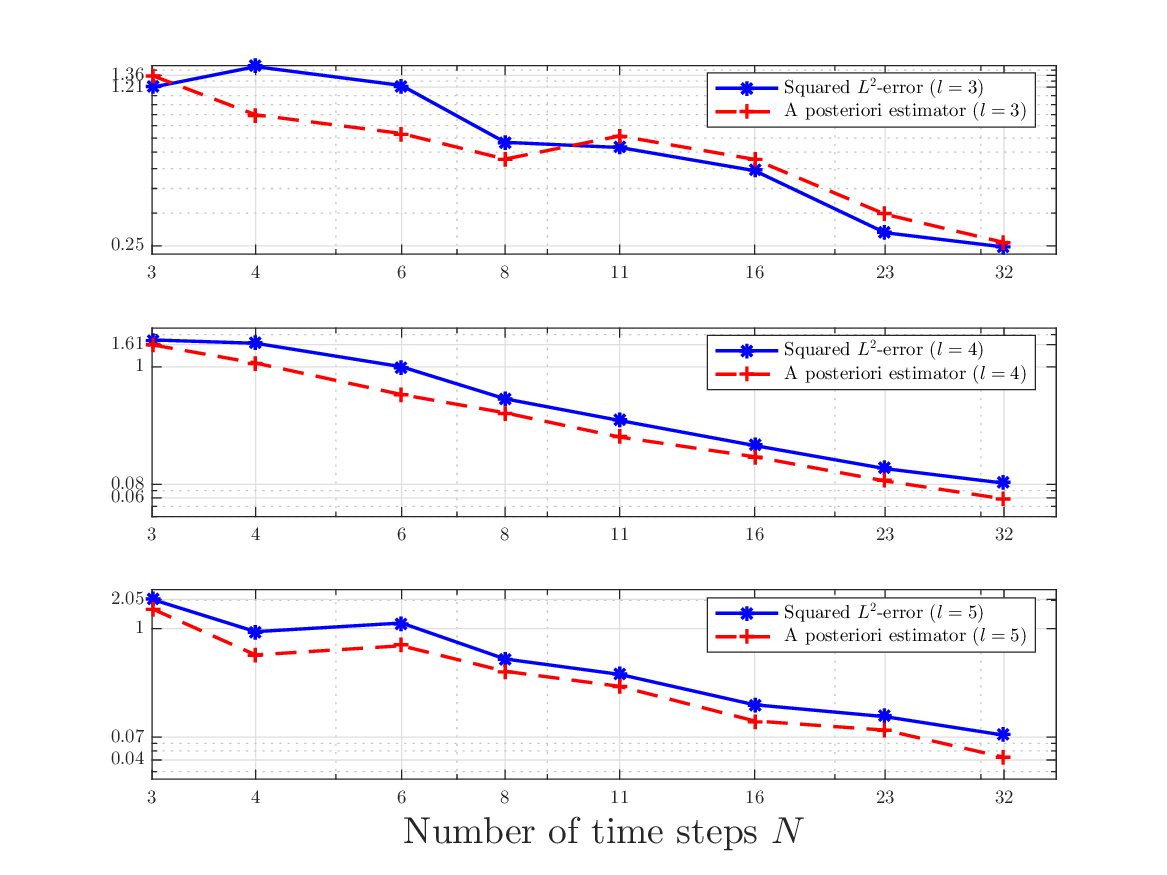}
\caption{
Comparison between the  squared $L^2$-error 
and  the {a posteriori} error estimator with different 
time steps and sample sizes
(plotted in a log-log scale).}
\label{fig:TurePost}
\end{figure}

Figure \ref{fig:TurePost} compares  the  squared $L^2$-errors 
and  
the estimated squared errors (by using \eqref{eq:errorEstimate})
 for
numerical solutions obtained with 5 Picard iterations (i.e., $P=5$),
and
different  time steps $N$ and sample sizes $\Lambda$
as listed in \eqref{eq:NKLambda}.
We clearly observe that, for all choices of sample sizes,
the convergence behavior of  the estimated error 
and the true  error
are almost identical as  
the time stepsize tends to zero,
which confirms the theoretical results in Theorem \ref{thm:a_posterior_conts}.
Moreover, 
the ratio of the estimated error to the true error
suggests that,
for  this set of model parameters,
 the generic equivalence constant in Theorem \ref{thm:a_posterior_conts}
lies within the range of $0.7-1.2$,
which indicates that  the error estimator predicts  the  squared approximation error very accurately.
By performing linear regression of the estimated values (the dashed line) against the number of time steps,
we can infer 
without using the analytic solution of \eqref{eq:fbsde_lq}
that
 the approximation error 
 (in the $L^2$-norm)
 converges to zero at a rate of $N^{-0.7}$
 for the cases $l=4, 5$,
while for $l=3$, 
the approximation error also converges to zero but with a much slower rate.

Note that for general decoupled FBSDEs,
Corollary 1 in \cite{gobet2006} suggests  choosing the sample size $\Lambda$ corresponding to $l=5$  
in the least-squares Monte Carlo method
to achieve a half-order  $L^2$-convergence with respect to the number of time steps $N$.
Our numerical results indicate that,
for the present example,
the convergence behaviour is  much better    than this theoretical error estimate,
possibly  due to a better time regularity of the process $Z$.
This suggests that one can design more efficient algorithms 
with tailored hyper-parameters
based on the error estimator \eqref{eq:errorEstimate}.
In particular, 
 \eqref{eq:errorEstimate}
 shows that 
 $l=4$ leads to  the most efficient  algorithm  among the three choices of $l\in \{3,4,5\}$.
The cheaper algorithm with $l=3$ in general 
results in  significantly larger errors,
while
the choice $l=5$
not only requires a tremendously higher computational cost,
but also
achieves  almost the same  
accuracy as   the choice  $l=4$
 for  sufficiently fine grids;
for instance, with $N=32$ time steps, the error estimator predicts increasing $l$ from $4$ to $5$
will only reduce the squared error 
from 0.0586 to 0.0427,
and in fact  the true squared error only reduces
from 0.0822 to 0.0734.
To illustrate the computational efforts  for the two choices $l=4,5$,
we present the 
corresponding
 sample size $\Lambda$
and  computational time
  with different numbers of time steps
in Table \ref{table:compare}.

\begin{table}[H]
 \renewcommand{\arraystretch}{1.05} 
\centering
\caption{ Sample size $\Lambda$ and computational time with different $N$ and $l$}
\label{table:compare}
\begin{tabular}[t]{@{}c c cc c cc@{}}
\toprule
 & \phantom{a}&  
 \multicolumn{2}{c}{$N=23$} &  \phantom{a}&  
 \multicolumn{2}{c}{$N=32$} \\
 \cmidrule{3-4} \cmidrule{6-7}
 $l$ & & Sample size  &   Run time & & Sample size &   Run time\\ \midrule
4 & & 32\,768  & 533s & & 	131\,072	& 3\,908s \\ 
5 & &  370\,728  & 5\,715s & & 	2\,097\,152	& 59\,338s \\ 
\bottomrule
\end{tabular}
\end{table}%

\begin{figure}[!ht]
\centering
\hspace{-12mm}
\includegraphics[trim=43 10 3 30, clip, width=0.54\columnwidth,keepaspectratio]{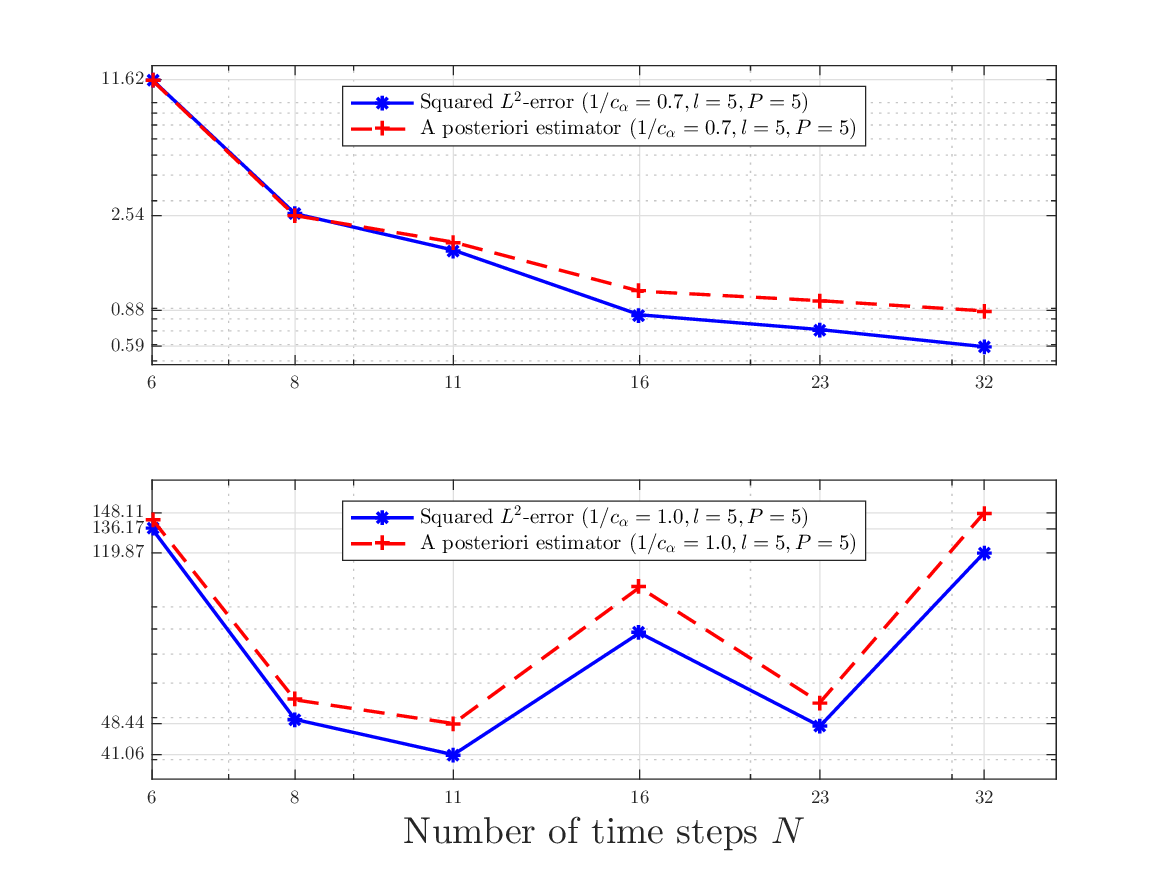}
\hspace{-7mm}
    \includegraphics[trim=43 10 3 30, clip, width=0.54\columnwidth,keepaspectratio]{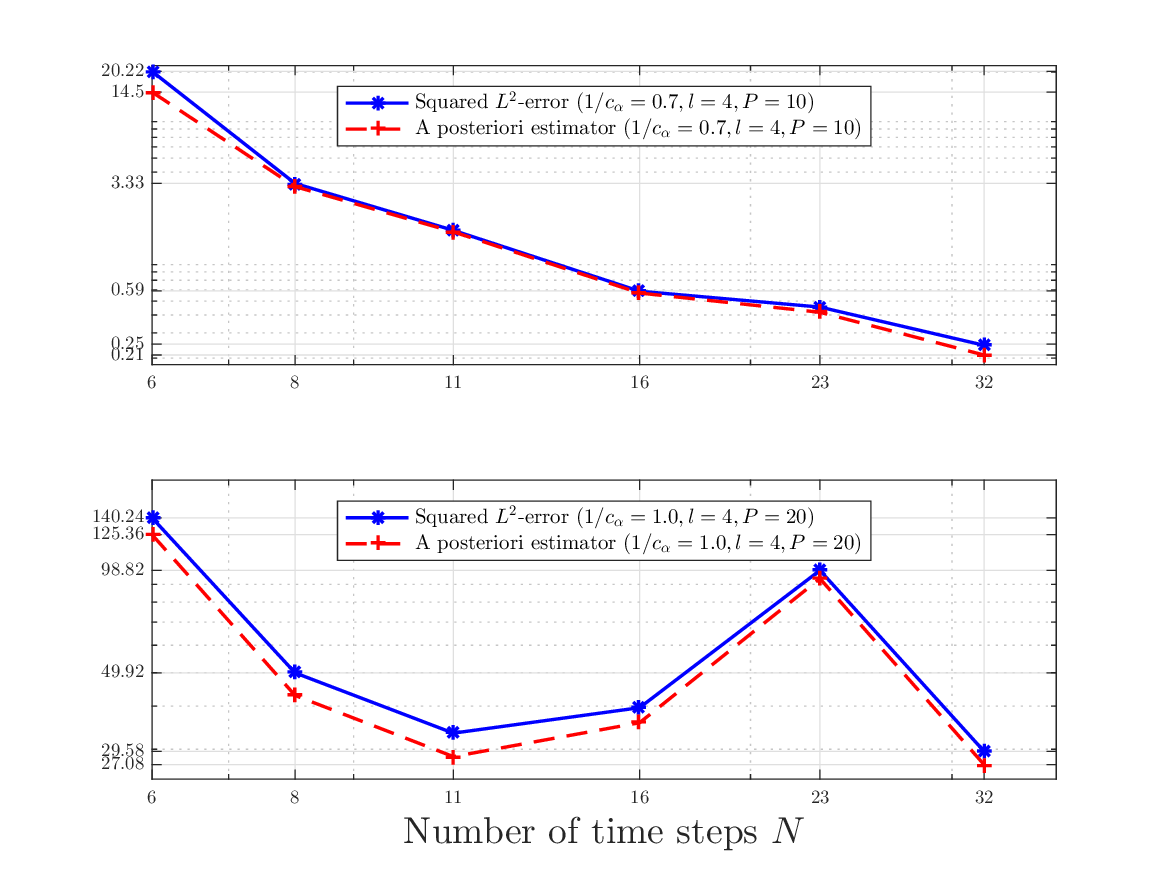}
\hspace{-20mm}    
\caption{
Robustness of  the {a posteriori} error estimator for different
coupling parameters  $c_\alpha$
(plotted in a log-log scale);
from top to bottom: numerical results with $1/c_\a=0.7$ and  $1/c_\a=1.0$;
from left to right: numerical results with larger sample size ($l=5$) but fewer Picard iterations ($P=5$),
and numerical results with smaller sample size ($l=4$) but more Picard iterations.
}
\label{fig:coupling}
\end{figure}

We then proceed to examine the performance of the error estimator 
for MV-FBSDEs with  stronger coupling,
by varying the coefficient  $1/c_\alpha \in \{ 0.7, 1\}$
and keeping the other model parameters  as above.
Figure \ref{fig:coupling} (left) presents
the numerical results obtained by 
the hybrid algorithm  with
5 Picard iterations (i.e., $P=5$)
and
the discretization parameters $N, K,\Lambda$ as defined in \eqref{eq:NKLambda}
for $j=4,\ldots, 9$, $l=5$.
By comparison with 
the numerical results for $1/c_a=0.3$
(see Figure \ref{fig:TurePost}, bottom),
we can clearly observe  that
 as the coupling parameter $1/c_a$ increases,
 the  same choice of  discretization parameters
 leads to larger approximation errors.
The $L^2$-approximation error 
decays slowly for  the case with $1/c_{\alpha}=0.7$
as the number of time steps $N$ tends to infinity, 
while for  the case with $1/c_{\alpha}=1$,
the approximation errors  oscillate around
the value  $10^2$ 
and do not show   convergence 
for sufficiently large $N$.
Similar phenomena have been observed 
in \cite{andrea2019,chassagneux2019,germain2019},
where the authors found that  a stronger coupling between the forward and backward equations
can 
pose significant numerical challenges 
such as 
 slow convergence or even divergence of  Picard iterations.

More importantly, we see that the performance of the {a posteriori} error estimator is very robust 
even for a large coupling parameter.
Regardless of the convergence of the hybrid algorithm,
the proposed error estimator captures the precise convergence behaviour of the true error
starting from a fairly small number of time steps,
and 
the ratio of the estimated error to the true error
generally stays in  the  range of $0.7-1$.
This enables us to judge the success of a given choice of discretization parameters 
without knowing the analytic solution to the problem.
In particular,  the error estimator suggests 
that for the case with $1/c_\alpha\in \{0.7,1\}$ and $P=5$,  the dominating error stems from other  sources 
(such as the Picard iteration)
instead of the time discretization or the  Monte Carlo regression.
Hence 
we cannot expect to significantly improve the approximation accuracy 
by keeping the number of Picard iterations fixed and  
only by
further refining the time grid or enlarging the sample size.

Motivated by the above observation,
we carry out the hybrid algorithm with 
 more Picard iterations 
($P=10$ for $1/c_\a=0.7$ and $P=20$ for $1/c_\a=1$)
but less simulation samples
($l=4$).
Figure \ref{fig:coupling} (right)
presents 
the numerical results for 
the discretization parameters $N, K,\Lambda$ as defined in \eqref{eq:NKLambda}
with $j=4,\ldots, 9$.
One can observe 
a significant improvement in the algorithm's efficiency
for the case with $1/c_\a=0.7$ (see Figure \ref{fig:coupling}, top-right),
where
the hybrid algorithm converges with a rate of $N^{-0.8}$
for the whole range of time steps,
and results in more accurate numerical solutions with less computational time
than the original choice of $P=5$, $l=5$ (see Figure \ref{fig:coupling}, top-left).
The situation is less clear for  the case with $1/c_\a=1$ (see Figure \ref{fig:coupling}, bottom-right).
Although the error is reduced by half
as compared to the choice of $P=5$ and $l=5$,
the error estimator does not decrease significantly starting from $N=11$,
which suggests that
more Picard iterations or a better scheme need to be employed for further improvements.\footnotemark
\footnotetext{
{
Alternative approaches to decouple \eqref{eq:MV-fbsde_exp}
include the   fictitious play approach in  \cite{han2022} 
and the gradient descent approach in \cite{reisinger2021b}. 
Rather than
replacing the former iterate by the new one as in  Picard iteration,
these methods update the approximate solutions with a smaller rate 
to ensure the convergence of algorithms. 
}
}

\subsection{Multidimensional linear and nonlinear MV-FBSDEs}
\label{sec:multi-dimensional}

{
In this section, we demonstrate the effectiveness of the  a posteriori   estimator \eqref{eq:error_estimator_intro} 
for the following multidimensional coupled MV-FBSDEs:
   for all $t\in [0,T]$, 
\begin{align}
\label{eq:fbsde_cs}
\begin{split}
 d X_t& =V_t\,d t, \quad  
 d V_t=
 \bigg(
 \mathbb{E}[\kappa(x,v,X_t,V_t)]\big\vert_{(x,v)=(X_t,V_t)}
 -\frac{1}{2\gamma}Y^{2}_t \bigg)\,d t+\sigma \, d W_t, 
\\
dY^1_t &=- \bigg( \mathbb{E}[\p_x \kappa(x,v,X_t,V_t)]\big\vert_{(x,v)=(X_t,V_t)}Y^2_t
+\mathbb{E}[\p_{x'} \kappa(X_t,V_t,x,v)Y^2_t]\big\vert_{(x,v)=(X_t,V_t)}\bigg)\, dt 
+Z^1_t \, d W_t, 
\\
dY^2_t &=- \bigg( Y^1_t+ \mathbb{E}[\p_v \kappa(x,v,X_t,V_t)]\big\vert_{(x,v)=(X_t,V_t)}Y^2_t
+\mathbb{E}[\p_{v'} \kappa(X_t,V_t,x,v)Y^2_t]\big\vert_{(x,v)=(X_t,V_t)}
\\
&\quad + 2(V_t-\mathbb{E}[V_t]) \bigg)\, dt 
+Z^2_t \, d W_t, 
\\
X_0&= x_0, \quad V_0=v_0, \quad 
Y^1_T=0, \quad Y^2_T=2(V_T-\mathbb{E}[V_T]),
\end{split}
\end{align}
where 
  $T>0$, $n\in \mathbb{N}$,  $\gamma>0$ and $\sigma\in \mathbb{R}^{n\times n}$ are given constants, 
  $ x_0,v_0$
are    given $\mathbb{R}^{n}$-valued square integrable random variables,
$W$ is an $n$-dimensional standard Brownian motion, 
 $\kappa:\mathbb{R}^n \times \mathbb{R}^n \times \mathbb{R}^n \times \mathbb{R}^n \to \mathbb{R}^n$ is 
 the interaction  kernel 
  given by
\bb\label{eq:commmunication_kernel}
\kappa(x,v,x',v')=\frac{v'-v }{(1+|x-x'|^2)^\beta},
\quad \textnormal{with some given  $\beta\ge 0$,}
\ee
and 
$X,V,Y^1,Y^2,Z^1,Z^2$ are unknown $n$-dimensional  solution processes.  
 The  equation \eqref{eq:fbsde_cs} arises from applying the Pontryagin maximum principle to 
 an optimal control problem of  multidimensional stochastic  mean-field Cucker--Smale   dynamics, 
where the controller  applies an external force to induce a
consensus of the velocity process $V$     
  (i.e., all trajectories of the velocity process 
tend to the same value as $t$ increases).
We refer the reader to \cite[Section 5.2]{reisinger2021b} for details of the control problem
and to  \cite{bailo2018,nourian2011,carrillo2010,han2022} for similar control problems.

 If $\beta =0$ in \eqref{eq:commmunication_kernel},
 then  
\eqref{eq:fbsde_cs} reduces to a linear MV-FBSDE, which satisfies (H.\ref{assum:fbsde_discrete_exp}) and (H.\ref{assum:time_reg}). 
For   
$\beta>0$, the coefficients of \eqref{eq:fbsde_cs}  
exhibit a more complicated interaction through the nonlinear kernel \eqref{eq:commmunication_kernel}.
The coupling  strength of the forward and backward dynamics in \eqref{eq:fbsde_cs}
is determined by the parameter $\gamma$, 
i.e., the smaller the parameter $\gamma$, the stronger the coupling. 
In the sequel, we 
fix $T=1$, $\sigma= 0.1\mathbb{I}_n$
and $(x_0,v_0)\sim \textrm{Unif}([0,1]^{2n})$, and 
examine the performance of \eqref{eq:error_estimator_intro}
for different choices of $\gamma, \beta>0$ and $n\in \mathbb{N}$.
As we shall see soon, 
although       (H.\ref{assum:fbsde_discrete_exp}) may not hold for general $\beta,\gamma>0$,   
the error estimator \eqref{eq:error_estimator_intro} still quantifies the approximation errors very well.

\paragraph{Two-dimensional nonlinear examples. }

We first carry out the experiments with 
  $n=1$, $\gamma=0.3$ and $\beta\in \{1,10\}$. 
For any given $\beta$, we compute approximate solutions to the two-dimensional MV-FBSDE \eqref{eq:fbsde_cs}
by the deep BSDE method introduced in     \eqref{eq:deep_fbsde_exp}. 
More precisely, 
we use 
a    neural  network 
$f_\theta:\mathbb{R}^2\to \mathbb{R}^2$ 
(with one hidden layer of width 20 and 
 the sigmoid  activation function)
to approximate the decoupling field of $(Y_0^1,Y_0^2)$,
and 
use 
 a    neural  network 
 $g_\vartheta:\mathbb{R}^3\to \mathbb{R}^2$
(with 
one hidden layer of width 110 and 
the sigmoid  activation function)
to approximate the decoupling fields of $(Z^1,Z^2) $ at all times. 
We then take a uniform grid $\pi$ of  $[0,1]$ with stepsize $1/32$,
and  compute the discrete solution $\hat{\Theta}=(\hat{X},\hat{V}, \hat{Y}^1, \hat{Y}^2, \hat{Z}^1, \hat{Z}^2)$  of  \eqref{eq:fbsde_cs} 
using the explicit forward Euler scheme  \eqref{eq:deep_fbsde_exp}
on the grid $\pi$.
Note that the discrete solution
$\hat{\Theta}$ 
   depends on the network parameters 
$(\theta,\vartheta)$, which we update  
 iteratively by minimising the following terminal loss: 
 \bb\label{eq:cs_terminal}
\mathcal{E}_\pi(\hat{\Theta})=\sE[|\hat{Y}^1_N|^2]+
\sE[|\hat{Y}^2_N-2(\hat{V}_N-\mathbb{E}[\hat{V}_N])|^2].
\ee
 For each   iteration, we consider   a particle approximation of size $500$ to \eqref{eq:fbsde_cs},   estimate   the law   $\mathbb{P}_{\hat{\Theta}}$ via its empirical distribution, 
 and update the network  parameters with  the Adam algorithm. 
 This yields a sequence of parameters $(\theta_\ell,\vartheta_\ell)_{\ell \in \mathbb{N}}$,
 which in turn   yields a sequence of approximate solutions  $(\hat{\Theta}^\ell)_{\ell \in \mathbb{N}}$ to \eqref{eq:fbsde_cs}. 

To assess the accuracy of $(\hat{\Theta}^\ell)_{\ell \in \mathbb{N}}$, 
we obtain a  reference solution using  the iterative  PDE  method   introduced in \cite[Section 5.2]{reisinger2021b},
as  the exact solution to \eqref{eq:fbsde_cs}  is  not known.\footnotemark
\footnotetext{For the PDE method, we choose the computational domain $[-1,3]^2$, time stepsize $1/64$ and mesh size $1/100$, which   lead to 
 negligible discretization errors on the basis of our experiments.} 
This allows for 
computing the   squared approximation errors of $(\hat{\Theta}^\ell)_{\ell\in \mathbb{N}}$ defined in \eqref{eq:error_T_N}. 
We shall compare the approximation error of $ \hat{\Theta}^\ell$ with
the predicted error   given by 
 the error estimator   \eqref{eq:error_estimator_intro}, 
which 
 simplifies  to the terminal loss \eqref{eq:cs_terminal},  
and  is   estimated by  a particle approximation of size $5000$.

Figure \ref{fig:cs_2d_different_beta} compares the   squared $L^2$-error of  numerical solutions with 
the estimated error from the   error estimator \eqref{eq:error_estimator_intro},
 for   different values of  $\beta$ and Adam iterations.
It can be observed that the error estimator  tracks the true error well 
starting from a fairly small number of Adam iterations.
The ratio of the estimated error to the true error consistently falls within   the range of $0.6-0.8$ throughout all iterations, 
and is robust to   the changes in the value     of $\beta$.
This suggests  that   the estimator provides  a reliable measure of the approximation error.    

 
\begin{figure}[!h]
\centering
\includegraphics[height=7cm,keepaspectratio]{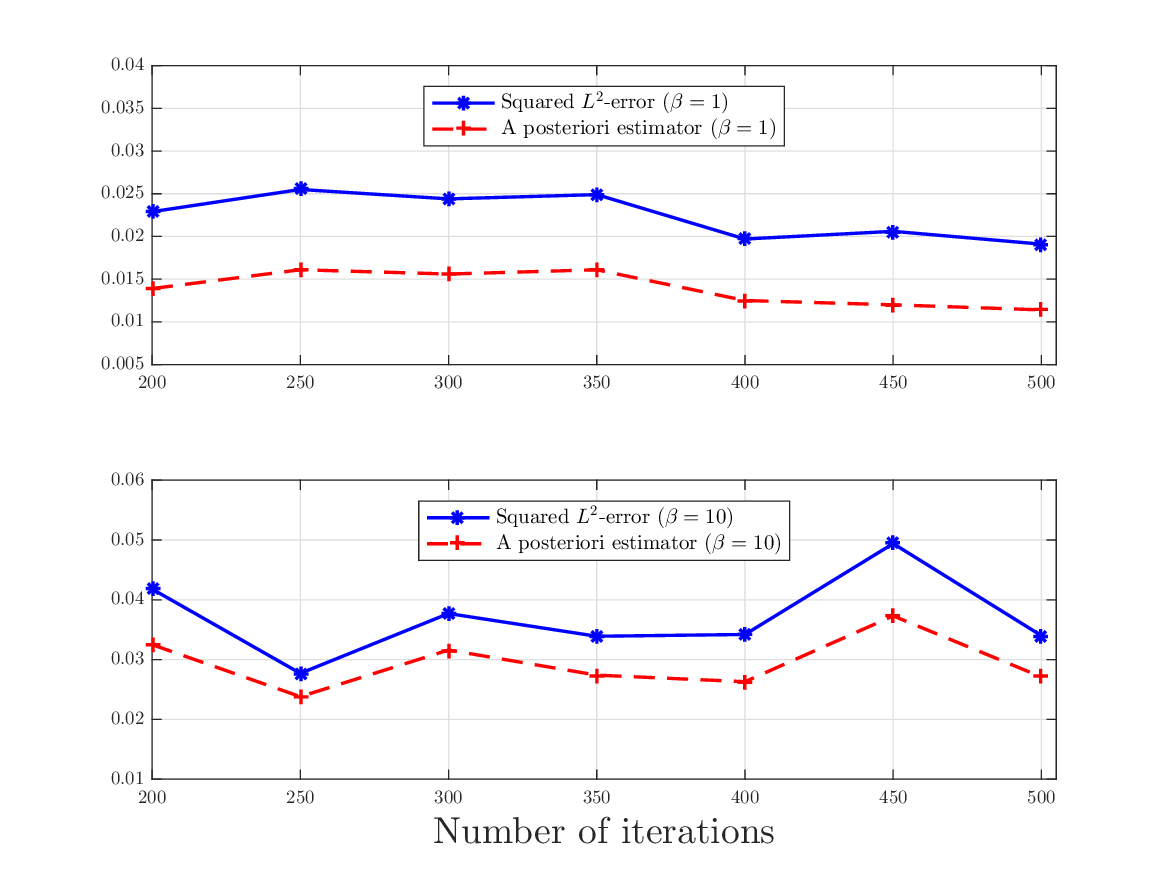}
\caption{
Comparison between the  squared $L^2$-error 
and  the {a posteriori} error estimator 
with  
$\beta\in \{1,10\}$ and different Adam iterations
 for two-dimensional MV-FBSDE   \eqref{eq:fbsde_cs}.}
\label{fig:cs_2d_different_beta}
\end{figure}

 \paragraph{High dimensional linear examples. }
We then perform    experiments with 
 $\beta =0$, $\gamma\in \{0,2, 0.3, 0.5\}$
 and   $n\in \{3, 6, 9\}$.  
As $\beta =0$, the interaction kernel $\kappa$ in \eqref{eq:commmunication_kernel} is independent of $x$ and $x'$, 
which along with $Y^2_T=0$ implies that $Y^1_t=Z^1_t=0$ for all $t\in [0,T]$. 
 Moreover, by the linearity of    $\kappa$ in $v$ and $v'$ and the terminal condition of $Y^2$, 
 one can show by It\^{o}'s formula  that  
 \bb\label{eq:cs_3d}
 Y_t^2=\alpha_t (V_t-\mathbb{E}[V_t]), 
\quad Z_t^2=\sigma \alpha_t,
\quad t\in [0,T],
\ee
 where 
 $\alpha:[0,T]\to \mathbb{R}$ satisfies 
 $ a'_t -2a_t -\frac{1}{2\gamma }a_t^2+2=0$ with $a_T=2$. This provides a reference solution against which the accuracy of the given approximate solutions can be evaluated.

In the sequel, 
for each $n$ and $\gamma$, 
we focus on solving the processes $V$, $Y^2$ and $Z^2$, since 
the processes $(Y^1, Z^1)$ are zero, 
and the process $X$ can be obtained by    integrating $V$ in time. This reduces \eqref{eq:fbsde_cs} to an $n$-dimensional linear MV-FBSDE, 
which  we solve  using the deep BSDE method described   above. 
In particular, we approximate $Y^2_0$ and $(Z^2)_{t\in [0,T]}$ by 
$$
Y^2_0 \approx f_\theta(V_0), \quad Z^2_t \approx g_\vartheta(t, V_t), \quad t\in [0,T], 
$$
where  
$f_\theta:\mathbb{R}^n\to \mathbb{R}^n$
and   $g_\vartheta:\mathbb{R}^{n+1}\to \mathbb{R}^n$
are    neural  networks  
with  the sigmoid  activation function
and 
1  hidden layer of widths 20 and 110,
 respectively.  
The network parameters $(\theta,\vartheta)$ are updated  by applying the Adam algorithm to minimize the terminal loss:
\bb\label{eq:cs_terminal_3d}
\mathcal{E}_\pi(\hat{\Theta})= 
\sE[|\hat{Y}^2_N-2(\hat{V}_N-\mathbb{E}[\hat{V}_N])|^2].
\ee
The other discretisation parameters, such as   the time grid for   the forward Euler scheme \eqref{eq:deep_fbsde_exp}  and the size of the particle system for each Adam iteration,  are chosen as in the above two-dimensional setting. 
For any given numerical solution $(\hat{V}, \hat{Y}^2,\hat{Z}^2)$, 
the  a posteriori error estimator   \eqref{eq:cs_terminal}
is estimated  by   a particle approximation of size $5000$,
and the squared      $L^2$-error is computed using the reference solution given in \eqref{eq:cs_3d}.
 
 Figure \ref{fig:cs_3d_different_gamma}   compares the   squared $L^2$-error of  numerical solutions with the a posteriori error estimator, for 
  $n=3$, and 
 different   values of  $\gamma\in \{0.2,0.3,0.5\}$ and Adam iterations. 
The results show that the estimated error and the true error have almost identical convergence behaviour. Moreover, as the number of iterations increases, the ratio of the estimated error to the true error decreases, indicating that the error estimator predicts the approximation error more accurately.
 One may also observe a slight increase  of the estimation ratio 
as $\gamma$ approaches    $0$. 
Specifically, over the last 150 iterations, the estimation ratios for $\gamma=0.5$ lie in the range of $0.92 - 1.05$,
whereas the estimation ratios for $\gamma=0.2$ lie in the range of $1.38 - 2$. 
This suggests  the generic equivalence constant in Theorem \ref{thm:a_posterior_conts}
may increase as  the coupling between the forward and backward equations
becomes stronger.

Figure \ref{fig:cs_different_n} investigates the impact of the problem dimension $n$ on the performance of the a posteriori error estimator. As the dimensionality increases, 
the squared $L^2$-error increases linearly, 
and 
the Adam algorithm requires more iterations to achieve the same level of accuracy. Despite this dependence on dimensionality, the ratio of the estimated error to the true error remains stable.
As the number of iterations increases, 
the estimation ratios decrease and eventually stabilize within the range of $1-1.5$.
This indicates  that the a posteriori error estimator is a reliable tool for assessing the accuracy of numerical solutions
 in high-dimensional problems. 


 
 \begin{figure}[!h]
 \begin{subfigure}{0.50\textwidth}
 \centering
\includegraphics[trim=43 10 3 30, clip, width=\linewidth,keepaspectratio]{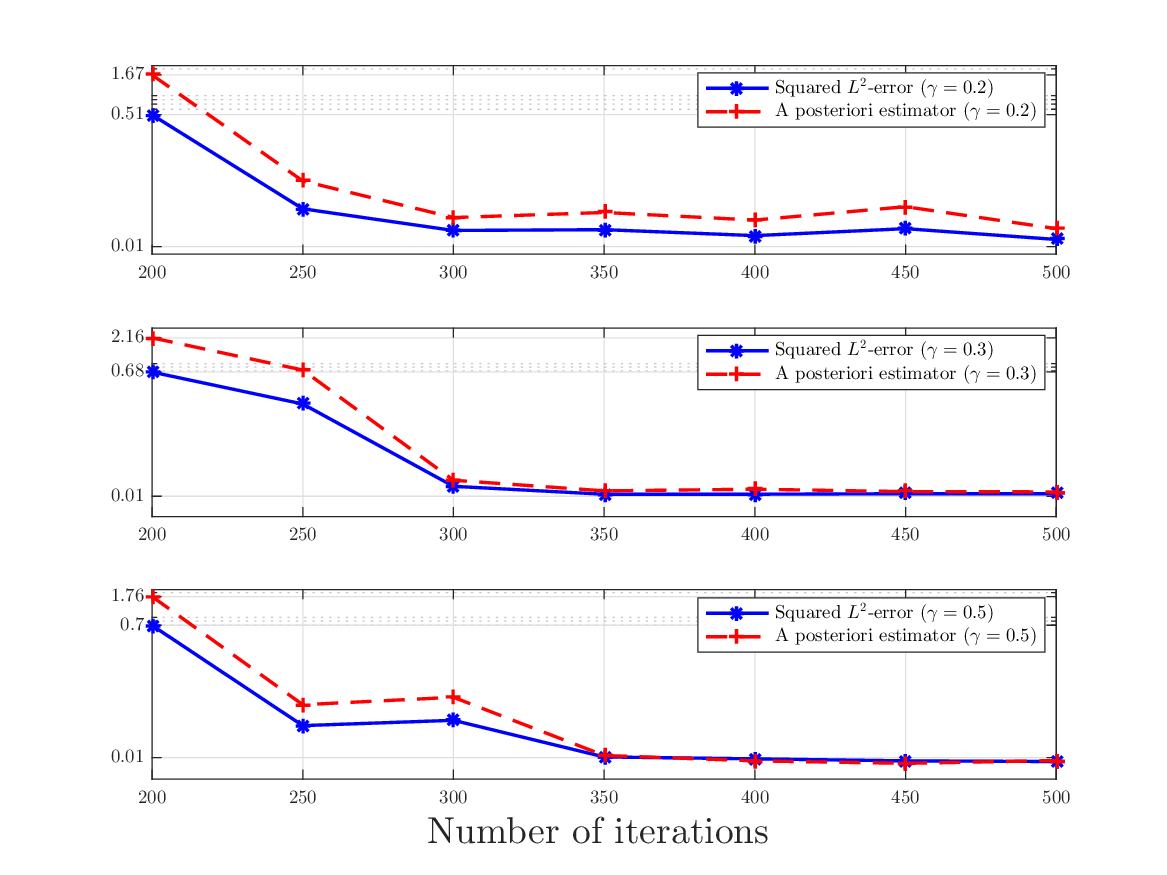}
 \caption{$n=3$, $\gamma\in \{0.2,0.3,0.5\}$}
 \label{fig:cs_3d_different_gamma}
    \end{subfigure}
%
\begin{subfigure}{0.50\textwidth}
 \centering
\includegraphics[trim=43 10 3 30, clip, width=\linewidth,keepaspectratio]{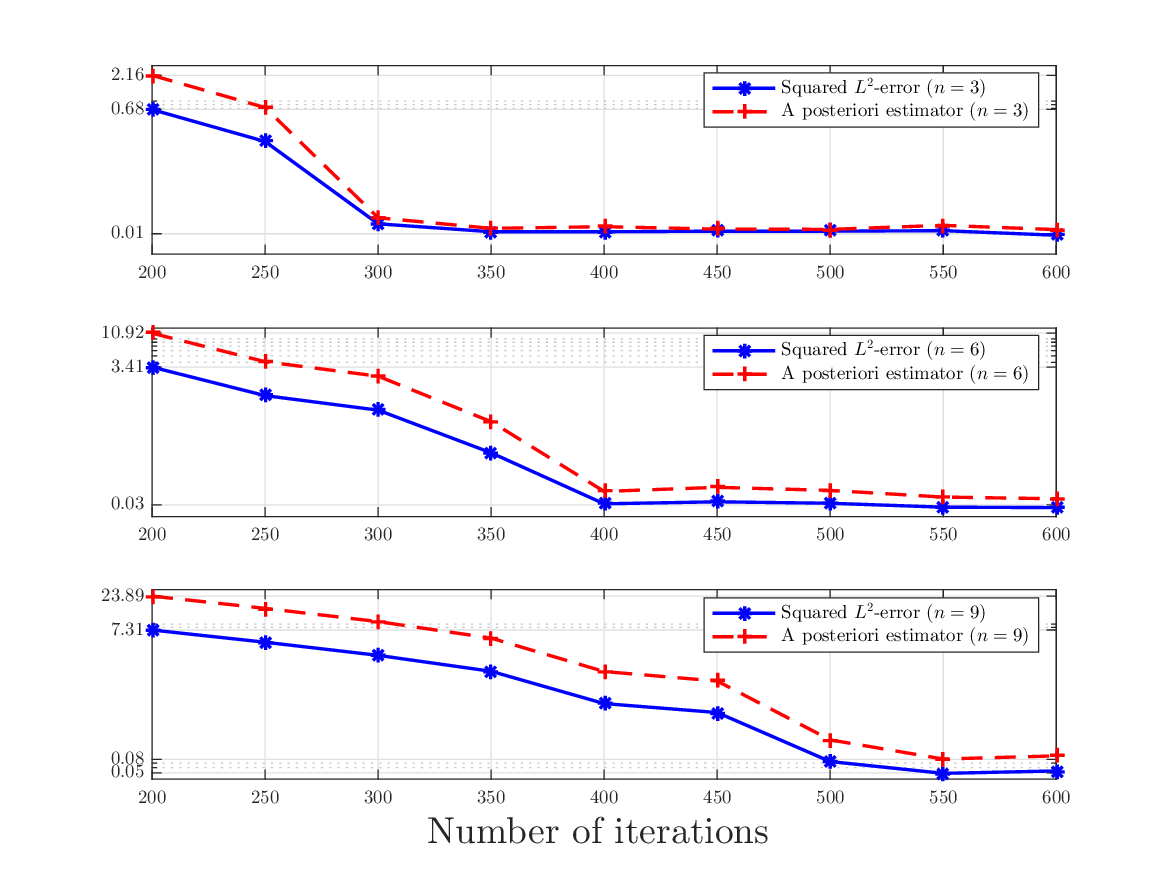}
 \caption{$\gamma=0.3$, $n\in \{3,6,9\}$}
  \label{fig:cs_different_n}
  \end{subfigure}

\caption{
Comparison between the  squared $L^2$-error 
and  the {a posteriori} error estimator 
with  different Adam iterations and values of $\gamma$ and $n$
for  linear  MV-FBSDE   \eqref{eq:fbsde_cs} (with $\beta =0$).}
\end{figure}

 }

\begin{appendices}

\section{Proofs of technical results}


\subsection{Proofs of  Proposition \ref{thm:stab_exp} and Lemma \ref{lem:linear_existence}}
\label{appendix:proof_section_discretebsde}

{
We first recall the discrete  Gronwall Lemma  given  in  \cite{holte2009}.

\begin{Lemma}
\label{lemma:gronwall}
Let $\{y_n\}_{n\in\mathbb{N}\cup\{0\}}$ and $\{g_n\}_{n\in\mathbb{N}\cup\{0\}}$ be  sequences of nonnegative real numbers,
and $c\ge 0$. If 
$y_n \le c+\sum_{k=0}^{n-1} g_k y_k$ for all $n\in\mathbb{N}\cup\{0\}$, then 
$$
\max_{k=1,\ldots n}y_k \le c\exp\left(\sum_{k=0}^{n-1}g_k\right),\quad \forall n\in\mathbb{N}\cup\{0\}.
$$ 
\end{Lemma}
}

We then establish    a preparatory   lemma for Proposition \ref{thm:stab_exp}.
\begin{Lemma}\l{lemma:stab_a_priori_exp}
Suppose the generator $(b,\sigma,f,g)$ satisfies (H.\ref{assum:fbsde_discrete_exp}),
and the generator $(\bar{b},\bar{\sigma},\bar{f},\bar{g})$ 
satisfies
(H.\ref{assum:fbsde_discrete_exp}\ref{item:integrable_exp}).
Let 
 $\a,\b_1,\b_2$ and $G$ be the constants in  (H.\ref{assum:fbsde_discrete_exp}\ref{item:monotone_exp}),
 $L$ be the constant in (H.\ref{assum:fbsde_discrete_exp}\ref{item:lipschitz_exp}),
 $N\in \sN$, $\lambda_0\in [0,1]$,
 let 
 $(\phi,\psi,\gamma),(\bar{\phi},\bar{\psi},\bar{\gamma})\in \cM^2(0,T; \sR^n\t \sR^{n\t d}\t \sR^m)$, 
 $\eta,\bar{\eta}\in L^2(\cF_T;\sR^m)$,
  $\xi_0,\bar{\xi}_0\in L^2(\cF_0;\sR^m)$,
 let 
$(X,Y, Z, M)\in 
\cS_N$
(resp.~$(\bar{X},\bar{Y}, \bar{Z}, \bar{M})\in 
\cS_N$)
satisfy \eqref{eq:MV-fbsde_exp_lambda}
defined on  $\pi_N$ 
corresponding to  
$\lambda=\lambda_0$,
 the generator $(b,\sigma,f,g)$
 and 
$(\phi,\psi,\gamma, \eta,\xi_0)$
(resp.~the generator $(\bar{b},\bar{\sigma},\bar{f},\bar{g})$
 and  
$(\bar{\phi},\bar{\psi},\bar{\gamma},\bar{\eta},\bar{\xi}_0)$),
and
let 
$(\delta X,\delta Y, \delta Z)= ( X-\bar{X}, Y-\bar{Y}, Z-\bar{Z})$,
$(\delta \phi, \delta \psi,\delta \gamma, \delta \eta, \delta \xi_0)= (\phi-\bar{\phi},\psi-\bar{\psi},\gamma-\bar{\gamma},\eta-\bar{\eta}, \xi_0-\bar{\xi}_0)$.
Then it holds for all 
 $\eps_1,\eps_2,\eps_3>0$
  that
\begin{align*}
\begin{split}
&\min\{1,\a\} \sE[
|G\delta X_{N}|^2]
+
 \sum_{i=0}^{N-1}
  \sE[
\beta_2|G\delta X_{i}|^2
+
\beta_1(
|G^*\delta Y_i|^2+|G^*\delta Z_{i}|^2
) ]\tau_N
\\
& \le
\eps_1\sE[ |G^*\delta Y_0|^2]
+\frac{1}{4\eps_1}\sE[ | \delta \xi_0|^2]
+
\eps_2
\sE[ |G\delta X_N|^2]
+
\frac{1}{4\eps_2}\sE[ |\lambda_0(g(\bar{X}_N,\sP_{\bar{X}_N})-\bar{g}(\bar{X}_N,\sP_{\bar{X}_N}))
+\delta \eta|^2]
\\
&\quad
 +\sum_{i=0}^{N-1}
 \sE\bigg[
\eps_2 | G \delta X_i|^2+\eps_3 (|G^*\delta Y_{i}|^2+| G^*\delta  {Z}_i|^2)
+
\bigg(
\frac{1}{4\eps_2}
+\tau_N
\bigg)
|
\lambda _0
(\hat{f}(t_{i})-\bar{f}(t_{i}))+\delta \gamma_i|^2
\\
&\quad
+
\bigg(
\frac{1}{4\eps_3}
+\tau_N\|G\|^2 \bigg)
 |
 \lambda_0(
 \hat{b}(t_{i})-\bar{b}(t_{i}))+\delta \phi_{i}|^2
+
\frac{1}{4\eps_3}
 | 
 \lambda_0( \hat{\sigma}(t_i)-\bar{\sigma}(t_i))+\delta \psi_{i}|^2
 \bigg]\tau_N
 \\
 &\quad
 +C_{(G,\b_1,\b_2, L)}\sum_{i=0}^{N-1}
 \sE[|  \delta X_i|^2+|\delta Y_{i}|^2+| \delta  {Z}_i|^2]\tau^2_N,
\end{split}
\end{align*}
  where 
 $C_{(G,\b_1,\b_2, L)}$ is  a constant depending  only on $G,\b_1,\b_2, L$,
 and
for each $i\in \cN$, $\phi=b, \sigma, f$,
we define
 $\hat{\phi}(t_i)\coloneqq \phi(t_i, \bar{X}_i, \bar{Y}_i, \bar{Z}_i,\sP_{(\bar{X}_i, \bar{Y}_i, \bar{Z}_i)})$
 and
$\bar{\phi}(t_i)\coloneqq \bar{\phi}(t_i, \bar{X}_i, \bar{Y}_i, \bar{Z}_i,\sP_{(\bar{X}_i, \bar{Y}_i, \bar{Z}_i)})$.

\end{Lemma}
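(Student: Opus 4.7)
The plan is to derive the stated inequality by computing the telescoping sum $\sE[\la G\delta X_N,\delta Y_N\ra]-\sE[\la G\delta X_0,\delta Y_0\ra]$ from the discrete dynamics of $(X,Y)$ and $(\bar X,\bar Y)$, extracting the sign-definite monotone contributions, and collecting every remaining perturbation via Young's inequality with the prescribed weights $\eps_1,\eps_2,\eps_3$. Starting from the discrete integration-by-parts identity
\begin{align*}
\la G\delta X_{i+1},\delta Y_{i+1}\ra-\la G\delta X_i,\delta Y_i\ra
&=\la G\Delta\delta X_i,\delta Y_i\ra+\la G\delta X_i,\Delta\delta Y_i\ra+\la G\Delta\delta X_i,\Delta\delta Y_i\ra,
\end{align*}
I would substitute $\Delta\delta X_i=\delta A_i\tau_N+\delta B_i\Delta W_i$ and $\Delta\delta Y_i=-\delta C_i\tau_N+\delta Z_i\Delta W_i+\Delta(\delta M)_i$, where $\delta A_i,\delta B_i,\delta C_i$ are split into the linear continuation terms, the Lipschitz generator differences $\delta b(t_i),\delta\sigma(t_i),\delta f(t_i)$ between $\Theta_i$ and $\bar\Theta_i$, and the residuals $r^b_i\coloneqq\lambda_0(\hat b(t_i)-\bar b(t_i))+\delta\phi_i$, $r^\sigma_i,r^f_i$. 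Summing over $i\in\cN_{<N}$ and taking expectation annihilates every stochastic integral, since $\sE_i[\Delta W_i]=0$ and $\sE_i[\Delta(\delta M)_i(\Delta W_i)^*]=0$ by the strong orthogonality $M\perp W$.

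The first-order-in-$\tau_N$ part of the resulting identity, after invoking (H.\ref{assum:fbsde_discrete_exp}\ref{item:monotone_exp}), produces exactly the target monotone bulk $\tau_N\sum_i\sE[\beta_1(|G^*\delta Y_i|^2+|G^*\delta Z_i|^2)+\beta_2|G\delta X_i|^2]$ as a lower bound, while at $i=N$ the decomposition $\delta Y_N=(1-\lambda_0)G\delta X_N+\lambda_0(g(X_N,\sP_{X_N})-g(\bar X_N,\sP_{\bar X_N}))+\lambda_0(g-\bar g)(\bar X_N,\sP_{\bar X_N})+\delta\eta$ combined with the monotonicity of $g$ and the elementary inequality $(1-\lambda_0)+\lambda_0\a\ge\min\{1,\a\}$ for $\lambda_0\in[0,1]$ contributes $\min\{1,\a\}\sE[|G\delta X_N|^2]$. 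Splitting $\sE[\la G\delta\xi_0,\delta Y_0\ra]$ by Young with weight $\eps_1$, the terminal residual $\sE[\la G\delta X_N,\lambda_0(g-\bar g)(\bar X_N,\sP_{\bar X_N})+\delta\eta\ra]$ with weight $\eps_2$, and the first-order perturbation sum $\tau_N\sum_i\sE[\la r^b_i,G^*\delta Y_i\ra+\la r^\sigma_i,G^*\delta Z_i\ra-\la G\delta X_i,r^f_i\ra]$ with weights $\eps_2,\eps_3$ reproduces every right-hand side contribution except the additional $\tau_N$-corrections on the coefficients of $|r^b_i|^2$ and $|r^f_i|^2$.

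The main obstacle, which is absent from the continuous-time analyses of \cite{peng1999,bensoussan2015}, is the purely discrete cross term $\sum_i\sE[\la G\Delta\delta X_i,\Delta\delta Y_i\ra]$. Using $\sE_i[\Delta W_i(\Delta W_i)^*]=\tau_N\sI_d$ together with $M\perp W$, it collapses to $\tau_N\sum_i\sE[\la G\delta B_i,\delta Z_i\ra]-\tau_N^2\sum_i\sE[\la G\delta A_i,\delta C_i\ra]$; the first piece merges cleanly with the $\sigma$-contribution of the monotonicity inequality, while the drift-drift piece $-\tau_N^2\sE[\la G\delta A_i,\delta C_i\ra]$ is the source of the two extra $\tau_N$-terms on $|r^b_i|^2$ and $|r^f_i|^2$. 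To extract these sharply I would split $\delta A_i$ and $\delta C_i$ into their Lipschitz parts (bounded via (H.\ref{assum:fbsde_discrete_exp}\ref{item:lipschitz_exp}) and $\cW_2^2(\sP_{\Theta_i},\sP_{\bar\Theta_i})\le\sE[|\delta\Theta_i|^2]$, whose quadratic expansion contributes the remainder $C_{(G,\beta_1,\beta_2,L)}\tau_N^2\sum_i\sE[|\delta X_i|^2+|\delta Y_i|^2+|\delta Z_i|^2]$ matching the last line of the target) and their residual parts $r^b_i,r^f_i$, and then apply Young's inequality $|\la Gr^b_i,r^f_i\ra|\le\tfrac12\|G\|^2|r^b_i|^2+\tfrac12|r^f_i|^2$ together with analogous bounds on the mixed Lipschitz-residual cross products to produce precisely the additional $\tau_N\|G\|^2|r^b_i|^2$ and $\tau_N|r^f_i|^2$ contributions. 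Combining all estimates yields the asserted inequality and, crucially for Proposition \ref{thm:stab_exp}, isolates the $\cO(\tau_N)$ drift-drift correction in a form that will be absorbable for sufficiently large $N$.
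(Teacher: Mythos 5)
Your proposal is correct and follows essentially the same route as the paper's proof: the discrete product rule for $\la G\delta X,\delta Y\ra$, cancellation of the martingale cross terms via adaptedness and the strong orthogonality of $M$ and $W$, the monotonicity condition applied to the interpolated generators together with $(1-\lambda_0)+\lambda_0\a\ge\min\{1,\a\}$ at the terminal time, Young's inequality with weights $\eps_1,\eps_2,\eps_3$ for the initial, terminal and bulk residuals, and the isolation of the $\tau_N^2$ drift--drift term (the paper's $\Sigma_i$) whose Lipschitz part yields the $C_{(G,\b_1,\b_2,L)}\tau_N^2$ remainder and whose residual part yields the extra $\tau_N\|G\|^2$ and $\tau_N$ coefficients. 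No gaps.
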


\begin{proof}[Proof of Lemma \ref{lemma:stab_a_priori_exp}]
Throughout this proof, 
for each $\lambda\in [0,1]$ and $(t,x,y,z,\mu,\nu)\in [0,1]\t \sR^n\t\sR^m\t\sR^{m\t d}\t\cP_2(\sR^{n+m+m d})\t \cP_2(\sR^n)$ 
let 
$b^\lambda(t,x,y,z,\mu)=
(1-\lambda)\b_1 (-G^*y)+\lambda b(t,x,y,z,\mu)$,
$\sigma^\lambda(t,x,y,z,\mu)=
(1-\lambda)\b_1 (-G^*z)+\lambda \sigma(t,x,y,z,\mu)$,
$f^\lambda(t,x,y,z,\mu)=
(1-\lambda)\b_2 Gx+\lambda f(t,x,y,z,\mu)$
and $g^\lambda(x,\nu)=(1-\lambda)Gx+\lambda g(x,\nu)$.
Let 
$\Theta=(X, Y, Z)$,  
$\bar{\Theta}=(\bar{X}, \bar{Y}, \bar{Z})$,
$(\delta \Theta, \delta X,\delta Y, \delta Z, \delta M)= ( \Theta-\bar{ \Theta}, X-\bar{X}, Y-\bar{Y}, Z-\bar{Z}, M-\bar{M})$,
for each $t\in [0,T]$, 
$h=b, \sigma, f, b^\lambda, \sigma^\lambda, f^\lambda$
let 
${h}(t)= h(t, {\Theta}_t,\sP_{{\Theta}_t})$,
$\hat{h}(t)= h(t, \bar{\Theta}_t,\sP_{\bar{\Theta}_t})$,
$\delta {h}(t)=h(t)- \hat{h}(t)$
and $\bar{h}(t)= \bar{h}(t, \bar{\Theta}_t,\sP_{\bar{\Theta}_t})$.

Note that
we can deduce from  \eqref{eq:MV-fbsde_exp_lambda} that
$\delta X_0=\delta \xi_0$,
$\delta Y_N=g^{\lambda_0}(X_N,\sP_{X_N})-\bar{g}^{\lambda_0}(\bar{X}_N,\sP_{\bar{X}_N})
+\delta \eta$,
and 
for any given $i\in \cN_{<N}$ that
\begin{align}
\Delta (\delta {X})_{i}
&=
(b^{\lambda_0}(t_{i})-\bar{b}^{\lam_0}(t_{i})+\delta \phi_{i})\tau_N  +
(\sigma^{\lambda_0}(t_i)-\bar{\sigma}^{\lambda_0}(t_i)+\delta \psi_{i})\,\Delta W_i
\l{eq:delta_X_lambda}
\\
\Delta (\delta {Y})_{i}
&=
-[f^{\lambda_0}(t_{i})-\bar{f}^{\lambda_0}(t_{i})+\delta \gamma_i]\tau_N+\delta  {Z}_i\,\Delta W_i+\Delta (\delta {M})_i,
\l{eq:delta_Y_lambda}
\end{align}
which together with 
the definition of the backward operator $\Delta$ 
shows for all $i\in\cN_{<N}$
  that
\begin{align*}
\begin{split}
\Delta \la G\delta X,\delta Y\ra_i
&=  \la G\Delta (\delta X)_{i}, (\delta Y)_{i}\ra+\la  G (\delta X)_i, \Delta(\delta Y)_i\ra
+\la  G \Delta(\delta X)_i, \Delta(\delta Y)_i\ra
\\
&=
\la G[(b^{\lambda_0}(t_{i})-\bar{b}^{\lam_0}(t_{i})+\delta \phi_{i})\tau_N+(\sigma^{\lambda_0}(t_i)-\bar{\sigma}^{\lambda_0}(t_i)+\delta \psi_{i})\,\Delta W_i], (\delta Y)_{i}\ra
\\
& \quad
+\la  G (\delta X)_i, -[f^{\lambda_0}(t_{i})-\bar{f}^{\lambda_0}(t_{i})+\delta \gamma_i]\tau_N+\delta  {Z}_i\,\Delta W_i+\Delta (\delta {M})_i \ra
\\
&\quad
+
\la  G [(b^{\lambda_0}(t_{i})-\bar{b}^{\lam_0}(t_{i})+\delta \phi_{i})\tau_N],
-[f^{\lambda_0}(t_{i})-\bar{f}^{\lambda_0}(t_{i})+\delta \gamma_i]\tau_N
\ra
\\
&\quad
+
\la  G [(b^{\lambda_0}(t_{i})-\bar{b}^{\lam_0}(t_{i})+\delta \phi_{i})\tau_N], 
\delta  {Z}_i\,\Delta W_i+\Delta (\delta {M})_i
\ra
\\
&\quad
+
\la  G [(\sigma^{\lambda_0}(t_i)-\bar{\sigma}^{\lambda_0}(t_i)+\delta \psi_{i})\,\Delta W_i], 
-[f^{\lambda_0}(t_{i})-\bar{f}^{\lambda_0}(t_{i})+\delta \gamma_i]\tau_N+\Delta (\delta {M})_i
\ra
\\
&\quad
+
\la  G [(\sigma^{\lambda_0}(t_i)-\bar{\sigma}^{\lambda_0}(t_i)+\delta \psi_{i})\,\Delta W_i], 
\delta  {Z}_i\,\Delta W_i
\ra.
 \end{split}
  \end{align*}
  Then,   adding and subtracting the terms $\hat{b}^{\lambda_0}(t_{i})$, $\hat{\sigma}^{\lambda_0}(t_{i})$ and $\hat{f}^{\lambda_0}(t_{i})$ imply 
 for all $i\in \cN_{<N}$ that 
\begin{align*}
\begin{split}
&\Delta \la G\delta X,\delta Y\ra_i
\\
&=
\la G[(\delta b^{\lambda_0}(t_{i})+\hat{b}^{\lambda_0}(t_{i})-\bar{b}^{\lam_0}(t_{i})+\delta \phi_{i})\tau_N+(\sigma^{\lambda_0}(t_i)-\bar{\sigma}^{\lambda_0}(t_i)+\delta \psi_{i})\,\Delta W_i], (\delta Y)_{i}\ra
\\
& \quad
+\la  G (\delta X)_i, -[\delta f^{\lambda_0}(t_{i})+\hat{f}^{\lambda_0}(t_{i})-\bar{f}^{\lambda_0}(t_{i})+\delta \gamma_i]\tau_N+\delta  {Z}_i\,\Delta W_i+\Delta (\delta {M})_i \ra
\\
&\quad
+
\la  G [(\delta b^{\lambda_0}(t_{i})+\hat{b}^{\lambda_0}(t_{i})-\bar{b}^{\lam_0}(t_{i})+\delta \phi_{i})\tau_N],
-[\delta f^{\lambda_0}(t_{i})+\hat{f}^{\lambda_0}(t_{i})-\bar{f}^{\lambda_0}(t_{i})+\delta \gamma_i]\tau_N
\ra
\\
&\quad
+
\la  G [(b^{\lambda_0}(t_{i})-\bar{b}^{\lam_0}(t_{i})+\delta \phi_{i})\tau_N], 
\delta  {Z}_i\,\Delta W_i+\Delta (\delta {M})_i
\ra
\\
&\quad
+
\la  G [(\sigma^{\lambda_0}(t_i)-\bar{\sigma}^{\lambda_0}(t_i)+\delta \psi_{i})\,\Delta W_i], 
-[f^{\lambda_0}(t_{i})-\bar{f}^{\lambda_0}(t_{i})+\delta \gamma_i]\tau_N+\Delta (\delta {M})_i
\ra
\\
&\quad
+
\la  G [(\delta\sigma^{\lambda_0}(t_i)+\hat{\sigma}^{\lambda_0}(t_{i})-\bar{\sigma}^{\lambda_0}(t_i)+\delta \psi_{i})\,\Delta W_i], 
\delta  {Z}_i\,\Delta W_i
\ra.
 \end{split}
  \end{align*}
By further introducing   the following residual terms $\Phi_i$,
 $\Psi_i$ and $\Sigma_i$:
  \begin{align*}
\Phi_i
&=
\la G[(\sigma^{\lambda_0}(t_i)-\bar{\sigma}^{\lambda_0}(t_i)+\delta \psi_{i})\,\Delta W_i], (\delta Y)_{i}\ra
%
+\la  G (\delta X)_i, \delta  {Z}_i\,\Delta W_i+\Delta (\delta {M})_i \ra
\\
&\quad
+
\la  G [(b^{\lambda_0}(t_{i})-\bar{b}^{\lam_0}(t_{i})+\delta \phi_{i})\tau_N], 
\delta  {Z}_i\,\Delta W_i+\Delta (\delta {M})_i
\ra
\\
&\quad
+
\la  G [(\sigma^{\lambda_0}(t_i)-\bar{\sigma}^{\lambda_0}(t_i)+\delta \psi_{i})\,\Delta W_i], 
-[f^{\lambda_0}(t_{i})-\bar{f}^{\lambda_0}(t_{i})+\delta \gamma_i]\tau_N+\Delta (\delta {M})_i
\ra,
\\
\Psi_i
&=
\la G[(\hat{b}^{\lam_0}(t_{i})-\bar{b}^{\lam_0}(t_{i})+\delta \phi_{i})\tau_N], (\delta Y)_{i}\ra
+\la  G (\delta X)_i, -[\hat{f}^{\lambda_0}(t_{i})-\bar{f}^{\lambda_0}(t_{i})+\delta \gamma_i]\tau_N \ra
\\
& \quad
+ \la G[(\hat{\sigma}^{\lambda_0}(t_i)-\bar{\sigma}^{\lambda_0}(t_i)+\delta \psi_{i})\,\Delta W_i], 
\delta  {Z}_i\,\Delta W_i\ra,  
\\
\Sigma_i
&=
\la  G [(\delta b^{\lambda_0}(t_{i})+\hat{b}^{\lambda_0}(t_{i})-\bar{b}^{\lam_0}(t_{i})+\delta \phi_{i})\tau_N],
-[\delta f^{\lambda_0}(t_{i})+\hat{f}^{\lambda_0}(t_{i})-\bar{f}^{\lambda_0}(t_{i})+\delta \gamma_i]\tau_N
\ra,
 \end{align*}
we have for all $i\in \cN_{<N}$ that
\begin{align*}
\begin{split}
\Delta \la G\delta X,\delta Y\ra_i
&=
\la \delta b^{\lambda_0}(t_{i})\tau_N, G^*(\delta Y)_{i}\ra
+\la  G (\delta X)_i, -\delta f^{\lambda_0}(t_{i})\tau_N \ra
\\
&\quad
+
\la  G \delta\sigma^{\lambda_0}(t_i)\,\Delta W_i, 
\delta  {Z}_i\,\Delta W_i
\ra
+\Phi_i+\Psi_i+\Sigma_i.
 \end{split}
  \end{align*}
  
  We then compute  $\sE[\Delta \la G\delta X,\delta Y\ra_i]$.
By using the  definition of the backward operator $\Delta$,
and
the fact that 
  $M$, $\bar{M}$ are strongly orthogonal to  $W$,
we see 
$\sE_i[ \Delta (\delta M)_i(\Delta W_i)^*]=0$ for all $i\in \cN_{<N}$.
Thus, we can  deduce from the adaptedness of the coefficients
and the law of iterated expectations
 that 
$\sE[\Phi_i]=0$.
Moreover, the property that $\sE_i[\Delta W_i (\Delta W_i)^*]=\tau_N\sI_d$ 
implies that
$$
\sE_i[\la G\delta \sigma^{\lambda_0}(t_i)\Delta W_i,\delta Z_i\Delta W_i\ra]
=\sE_i[\tr(\Delta W_i(\Delta W_i)^*(G\delta \sigma^{\lambda_0}(t_i))^*\delta Z_i)]
=\tau_N\sE_i[\la \delta \sigma^{\lambda_0}(t_i),G^*\delta Z_i)\ra ],
$$
where we have used the fact that  the trace
commutes with conditional expectations.
Consequently, 
for each $i\in \cN_{<N}$, 
we have that
\begin{align*}
\sE[\Delta \la G\delta X,\delta Y\ra_i]
&=\tau_N
\sE[
\la \delta \sigma^{\lambda_0}(t_i),G^*\delta Z_i\ra
 +\la  \delta b^{\lambda_0}(t_{i}),G^*\delta Y_{i} \ra
+ \la -\delta f^{\lambda_0}(t_{i}),G\delta X_{i}\ra
 ]
\\
&\quad 
 +\sE[\Psi_i]+\sE[\Sigma_i],
\end{align*}
where we can deduce from the definitions of $(\hat{b}^{\lambda_0},\hat{\sigma}^{\lambda_0},\hat{f}^{\lambda_0},\hat{g}^{\lambda_0})$ 
  and $(\bar{b}^{\lambda_0},\bar{\sigma}^{\lambda_0},\bar{f}^{\lambda_0},\bar{g}^{\lambda_0})$
  that
 \begin{align}
\sE[\Psi_i]
&=
\tau_N\sE\big[
\la \lam_0(\hat{b}(t_{i})-\bar{b}(t_{i}))+\delta \phi_{i}, G^*\delta Y_{i}\ra
+\la  G (\delta X)_i, -[{\lambda_0}(\hat{f}(t_{i})-\bar{f}(t_{i}))+\delta \gamma_i] \ra
\nb
\\
& \quad
+ \la {\lambda_0}(\hat{\sigma}(t_i)-\bar{\sigma}(t_i))+\delta \psi_{i}, 
G^*\delta  {Z}_i\ra
\big],  
 \l{eq:exp_Psi}
\\
\sE[\Sigma_i]
&=
\tau_N^2
\sE\big[
\la  G [\delta b^{\lambda_0}(t_{i})+{\lambda_0}(\hat{b}(t_{i})-\bar{b}(t_{i}))+\delta \phi_{i}],
-[\delta f^{\lambda_0}(t_{i})+{\lambda_0}(\hat{f}(t_{i})-\bar{f}(t_{i}))+\delta \gamma_i]
\ra
\big].
 \l{eq:exp_Sigma}
 \end{align}

Note that  for each $\lambda\in [0,1)$, the coefficients
  $(b^\lambda,\sigma^\lambda, -f^\lam)$ 
  satisfy (H.\ref{assum:fbsde_discrete_exp}\ref{item:monotone_exp}) 
  with the same $G,\b_1,\b_2$.
Hence, by summing the above identity over the index $i$ from $0$ to $N-1$
and  applying the monotonicity condition (H.\ref{assum:fbsde_discrete_exp}\ref{item:monotone_exp}),
we have that
\begin{align*}
&\sE[ \la G\delta X_N, \delta Y_N\ra]
-\sE[ \la G\delta X_0, \delta Y_0\ra]
 \\
 & =
\sum_{i=0}^{N-1}
\tau_N
\sE[
\la -\delta f^{\lambda_0}(t_{i}),G\delta X_{i}\ra
 +\la \delta \sigma^{\lambda_0}(t_i), G^*\delta Z_i\ra
 +\la \delta b^{\lambda_0}(t_{i}),G^*\delta Y_{i} \ra
 ]
 +\sE[\Psi_i]
  +\sE[\Sigma_i]
 \\
  &\le
 \sum_{i=0}^{N-1}
  \tau_N
\sE[
-\beta_2|G\delta X_{i}|^2
-\beta_1(
|G^*\delta Z_i|^2+|G^*\delta Y_{i}|^2
)
 ]
 +\sum_{i=0}^{N-1}(\sE[\Psi_i]+\sE[\Sigma_i]).
\end{align*}
 Then by rearranging the terms
and using the fact that $(\delta X)_0=\delta \xi_0$,
\begin{align*}
\delta Y_N
&=g^{\lambda_0}(X_N,\sP_{X_N})-g^{\lambda_0}(\bar{X}_N,\sP_{\bar{X}_N})
+g^{\lambda_0}(\bar{X}_N,\sP_{\bar{X}_N})-\bar{g}^{\lambda_0}(\bar{X}_N,\sP_{\bar{X}_N})
+\delta \eta,
\end{align*}
 and the monotonicity of $g$,
we arrive at the estimate that
\begin{align*}
&
(1-\lambda_0+\lambda_0\a) \sE[
|G\delta X_{N}|^2]
+
 \sum_{i=0}^{N-1}
  \sE[
\beta_2|G\delta X_{i}|^2
+
\beta_1(
|G^*\delta Y_i|^2+|G^*\delta Z_{i}|^2
) ]\tau_N
 \\
& \le
\sE[ \la G\delta \xi_0, \delta Y_0\ra]
-\sE[ \la G\delta X_N, g^{\lambda_0}(\bar{X}_N,\sP_{\bar{X}_N})-\bar{g}^{\lambda_0}(\bar{X}_N,\sP_{\bar{X}_N})
+\delta \eta\ra]
 +\sum_{i=0}^{N-1}(\sE[\Psi_i]+\sE[\Sigma_i]),
\end{align*}
which together with
the fact that  
$(1-\lambda_0+\lambda_0\a)\ge \min\{1,\a\}$,
\eqref{eq:exp_Psi} and
 Young's inequality implies for all $\eps_1,\eps_2,\eps_3>0$ that 
\begin{align*}
\begin{split}
&\min\{1,\a\} \sE[
|G\delta X_{N}|^2]
+
 \sum_{i=0}^{N-1}
  \sE[
\beta_2|G\delta X_{i}|^2
+
\beta_1(
|G^*\delta Z_i|^2+|G^*\delta Y_{i}|^2
) ]\tau_N
\\
& \le
\eps_1\sE[ |G^*\delta Y_0|^2]
+\frac{1}{4\eps_1}\sE[ | \delta \xi_0|^2]
+
\eps_2
\sE[ |G\delta X_N|^2]
+
\frac{1}{4\eps_2}\sE[ |\lambda_0(g(\bar{X}_N,\sP_{\bar{X}_N})-\bar{g}(\bar{X}_N,\sP_{\bar{X}_N}))
+\delta \eta|^2]
\\
&\quad
 +\sum_{i=0}^{N-1}
 \sE\bigg[
\eps_2 | G \delta X_i|^2+\eps_3 (|G^*\delta Y_{i}|^2+| G^*\delta  {Z}_i|^2)
+\frac{1}{4\eps_2}
|
\lambda _0
(\hat{f}(t_{i})-\bar{f}(t_{i}))+\delta \gamma_i|^2
\\
& \quad
+
\frac{1}{4\eps_3}
\bigg(
 |
 \lambda_0(
 \hat{b}(t_{i})-\bar{b}(t_{i}))+\delta \phi_{i}|^2
 +
 | 
 \lambda_0( \hat{\sigma}(t_i)-\bar{\sigma}(t_i))+\delta \psi_{i}|^2
 \bigg) \bigg]\tau_N
 +\sum_{i=0}^{N-1} \sE[\Sigma_i].
\end{split}
\end{align*}
Finally, it remains to estimate $\sum_{i=0}^{N-1} \sE[\Sigma_i]$.
We obtain from
\eqref{eq:exp_Sigma} and
Young's inequality
 that
\begin{align*}
\sE[\Sigma_i]
&=
\tau_N^2
\sE\big[
\la  G [\delta b^{\lambda_0}(t_{i})+{\lambda_0}(\hat{b}(t_{i})-\bar{b}(t_{i}))+\delta \phi_{i}],
-[\delta f^{\lambda_0}(t_{i})+{\lambda_0}(\hat{f}(t_{i})-\bar{f}(t_{i}))+\delta \gamma_i]
\ra
\big]
\\
&\le
\tau_N^2
\bigg(
\sE\big[
\|G\|^2|\delta b^{\lambda_0}(t_{i})|^2
 +\|G\|^2|{\lambda_0}(\hat{b}(t_{i})-\bar{b}(t_{i}))+\delta \phi_{i}|^2
 +|\delta f^{\lambda_0}(t_{i})|^2
 \\
&\quad
 +|{\lambda_0}(\hat{f}(t_{i})-\bar{f}(t_{i}))+\delta \gamma_i|^2
\big]
\bigg),
\end{align*}
where $\|G\|$ denotes the spectral norm of $G$.
Moreover, we can deduce from the definitions of $(\delta{b}^{\lambda_0},\delta{f}^{\lambda_0})$ and 
$(\hat{b},\hat{f})$ and also
the assumption (H.\ref{assum:fbsde_discrete_exp}\ref{item:lipschitz_exp}) that
\begin{align*}
\sE[|\delta b^{\lambda_0}(t_{i})|^2]
&=\sE[|(1-\lambda_0)\b_1(-G^*\delta Y_i)+\lambda_0 (b(t_i)-\hat{b}(t_i))|^2]
\\
&\le 
2(1-\lambda_0)^2\b^2_1\|G^*\|^2\sE[\|\delta Y_i|^2]+8\lambda^2_0 L^2\sE[|\delta \Theta_i|^2],
\\
\sE[|\delta f^{\lambda_0}(t_{i})|^2]
&\le 
2(1-\lambda_0)^2\b^2_2\|G\|^2\sE[\|\delta X_i|^2]+8\lambda^2_0 L^2\sE[|\delta \Theta_i|^2],
\end{align*}
which
together with the fact that $\lam_0\in [0,1]$
gives us that 
$\sE[
\|G\|^2|\delta b^{\lambda_0}(t_{i})|^2+|\delta f^{\lambda_0}(t_{i})|^2]
\le C\sE[|\delta \Theta_i|^2]$
for a constant $C$ depending on $G$, $\b_1,\b_2, L$.
This finishes the proof of Lemma \ref{lemma:stab_a_priori_exp}.
\end{proof}

With Lemma \ref{lemma:stab_a_priori_exp} at hand, we now establish Proposition \ref{thm:stab_exp}
by separately discussing the following two cases: 
(1) $m< n$,  or $m=n$ with $\beta_1>0$;
 (2)
$m> n$, or $m=n$ with $\a>0,\beta_2>0$.

\begin{proof}[Proof of Proposition \ref{thm:stab_exp}]
Throughout this proof, 
let $N_0\in \sN$ be a sufficiently large natural number whose value will be specified later.
For each $\lambda\in [0,1]$ and $(t,x,y,z,\mu,\nu)\in [0,1]\t \sR^n\t\sR^m\t\sR^{m\t d}\t\cP_2(\sR^{n+m+m d})\t \cP_2(\sR^n)$ 
let 
$b^\lambda(t,x,y,z,\mu)=
(1-\lambda)\b_1 (-G^*y)+\lambda b(t,x,y,z,\mu)$,
$\sigma^\lambda(t,x,y,z,\mu)=
(1-\lambda)\b_1 (-G^*z)+\lambda \sigma(t,x,y,z,\mu)$,
$f^\lambda(t,x,y,z,\mu)=
(1-\lambda)\b_2 Gx+\lambda f(t,x,y,z,\mu)$
and $g^\lambda(x,\nu)=(1-\lambda)Gx+\lambda g(x,\nu)$.
Let 
$N\in \sN\cap [N_0,\infty)$, $\lambda_0\in [0,1]$,
$\Theta=(X, Y, Z)$,  
$\bar{\Theta}=(\bar{X}, \bar{Y}, \bar{Z})$,
$(\delta \Theta, \delta X,\delta Y, \delta Z, \delta M)= ( \Theta-\bar{ \Theta}, X-\bar{X}, Y-\bar{Y}, Z-\bar{Z}, M-\bar{M})$,
$(\delta \phi, \delta \psi,\delta \gamma, \delta \eta, \delta \xi_0)= (\phi-\bar{\phi},\psi-\bar{\psi},\gamma-\bar{\gamma},\eta-\bar{\eta}, \xi_0-\bar{\xi}_0)$,
for each $t\in [0,T]$, 
$h=b, \sigma, f, b^\lambda, \sigma^\lambda, f^\lambda$
let 
${h}(t)= h(t, {\Theta}_t,\sP_{{\Theta}_t})$,
$\hat{h}(t)= h(t, \bar{\Theta}_t,\sP_{\bar{\Theta}_t})$,
$\delta {h}(t)=h(t)- \hat{h}(t)$
and $\bar{h}(t)= \bar{h}(t, \bar{\Theta}_t,\sP_{\bar{\Theta}_t})$.
We   denote by $C$ a generic constant, 
which
depends only on   constants in (H.\ref{assum:fbsde_discrete_exp}) 
and  may take a different value at each occurrence.

We start by deriving several {a priori} estimates based on \eqref{eq:delta_X_lambda} and \eqref{eq:delta_Y_lambda}.
Note that for any given  $i\in \cN_{<N}$, 
by using  \eqref{eq:delta_X_lambda}, the Cauchy-Schwarz inequality
and 
the It\^{o} isometry,
\begin{align*}
\sE[|\delta X_{i+1}|^2]
&\le 3
\bigg(
\sE[|\delta X_{0}|^2]
+T\sum_{j=0}^{i}\sE[|b^{\lambda_0}(t_{j})-\bar{b}^{\lam_0}(t_{j})+\delta \phi_{j}|^2]\tau_N
\\
&\quad
+\sum_{j=0}^{i}\sE[|\sigma^{\lambda_0}(t_j)-\bar{\sigma}^{\lambda_0}(t_j)+\delta \psi_{j}|^2]\tau_N
\bigg)
\\
&\le 3
\bigg(
\sE[|\delta X_{0}|^2]
+2T\sum_{j=0}^{i}\sE[|\delta b^{\lambda_0}(t_{j})|^2+|\hat{b}^{\lambda_0}(t_{j})-\bar{b}^{\lam_0}(t_{j})+\delta \phi_{j}|^2]\tau_N
\\
&\quad
+2\sum_{j=0}^{i}\sE[|\delta \sigma^{\lambda_0}(t_j)|^2+|\hat{\sigma}^{\lambda_0}(t_j)-\bar{\sigma}^{\lambda_0}(t_j)+\delta \psi_{j}|^2]\tau_N
\bigg).
\end{align*}
Then by using the Lipschitz continuity of $b$ and $\sigma$,
the inequality that $\cW^2_2(\sP_{U},\sP_{U'})\le \sE[|U-U'|^2]$
for any $U,U'\in L^2(\cF;\sR^n\t \sR^m\t \sR^{m\t d})$,
 Gronwall's inequality {
 in Lemma \ref{lemma:gronwall}}
and the fact that $\delta X_0=\delta \xi_0$, 
 it holds for all $N\in \sN$
and  all $i\in \cN_{<N}$
that
\begin{align}\l{eq:deltaX_stab}
\begin{split}
\sup_{i\in \cN}\sE[|\delta X_{i}|^2]
&\le C
\bigg(
\sE[|\delta \xi_{0}|^2]
+
\sum_{i=0}^{N-1}\sE[|\delta{Y}_{i}|^2+|\delta{Z}_{i}|^2
]\tau_N
\\
&\quad
+\sum_{i=0}^{N-1}\sE[|\lambda_0(\hat{b}(t_{i})-\bar{b}(t_{i}))+\delta \phi_{i}|^2
+|\lambda_0(\hat{\sigma}(t_i)-\bar{\sigma}(t_i))+\delta \psi_{i}|^2]\tau_N
\bigg).
\end{split}
\end{align}

On the other hand, 
for each $i\in \cN_{<N}$, 
we can obtain from   \eqref{eq:delta_Y_lambda}
that
\begin{align*}
&\Delta \la \delta Y,\delta Y \ra_i
= \la \delta Y_{i+1},\Delta (\delta Y)_i \ra+  \la \Delta (\delta Y)_i , \delta Y_{i}\ra
\\
&=
 \la \delta Y_{i+1}+\delta Y_{i}, -[f^{\lambda_0}(t_{i})-\bar{f}^{\lambda_0}(t_{i})+\delta \gamma_i]\tau_N+\delta  {Z}_i\,\Delta W_i+\Delta (\delta {M})_i\ra
 \\
 &= \la 2\delta Y_{i}-[f^{\lambda_0}(t_{i})-\bar{f}^{\lambda_0}(t_{i})+\delta \gamma_i]\tau_N, -[f^{\lambda_0}(t_{i})-\bar{f}^{\lambda_0}(t_{i})+\delta \gamma_i]\tau_N \ra
 +|  \delta  {Z}_i\,\Delta W_i|^2
+|\Delta (\delta {M})_i|^2
\\
&\quad +
\la 2\delta Y_{i}-[f^{\lambda_0}(t_{i})-\bar{f}^{\lambda_0}(t_{i})+\delta \gamma_i]\tau_N, \delta  {Z}_i\,\Delta W_i+\Delta (\delta {M})_i \ra
\\
&\quad
+
\la \delta  {Z}_i\,\Delta W_i+\Delta (\delta {M})_i, -[f^{\lambda_0}(t_{i})-\bar{f}^{\lambda_0}(t_{i})+\delta \gamma_i]\tau_N \ra
+2\la \Delta (\delta {M})_i,\delta  {Z}_i\,\Delta W_i\ra. 
\end{align*}
Taking the expectation and using the orthogonality  between   martingales $\delta M$ and  $W$
yield 
\begin{align*}
&\sE[|\delta Y_{i+1}|^2-|\delta Y_{i}|^2]=\sE[\Delta \la \delta Y,\delta Y \ra_i]
\\
&= 
\sE[   \la 2\delta Y_{i}-(f^{\lambda_0}(t_{i})-\bar{f}^{\lambda_0}(t_{i})+\delta \gamma_i)\tau_N, -(f^{\lambda_0}(t_{i})-\bar{f}^{\lambda_0}(t_{i})+\delta \gamma_i)\tau_N \ra
]
\\
&\quad
 +\sE[| \delta {Z}_i|^2\tau_N]+\sE[|\Delta  (\delta {M})_i |^2].
\end{align*}
Rearranging the terms and summing over the index imply 
for all $i\in \cN_{<N}$ that 
\begin{align}\l{eq:delta Y_i_estimate}
\begin{split}
&\sE[|\delta Y_{i}|^2]+\sum_{j=i}^{N-1}\big(\sE[| \delta {Z}_j|^2\tau_N+|\Delta  (\delta {M})_j|^2]\big)
\\
&=
\sE[|\delta Y_{N}|^2]
+\sum_{j=i}^{N-1} 
 \sE[    \la 2\delta Y_{j}-(f^{\lambda_0}(t_{j})-\bar{f}^{\lambda_0}(t_{j})+\delta \gamma_j)\tau_N, (f^{\lambda_0}(t_{j})-\bar{f}^{\lambda_0}(t_{j})+\delta \gamma_j)\tau_N \ra ].
 \end{split}
\end{align}
We now derive an upper bound of the two terms on the right-hand side of \eqref{eq:delta Y_i_estimate}  separately.
One can see easily  from the Lipschitz continuity of $g$ 
that 
\begin{align}\l{eq:delta Y_N_estimate}
\begin{split}
&\sE[|\delta {Y}_{N}|^2]
=\sE[|g^{\lambda_0}(X_N,\sP_{X_N})-g^{\lambda_0}(\bar{X}_N,\sP_{\bar{X}_N})
+g^{\lambda_0}(\bar{X}_N,\sP_{\bar{X}_N})-\bar{g}^{\lambda_0}(\bar{X}_N,\sP_{\bar{X}_N})
+\delta \eta|^2]
\\
&\le C(
\sE[|{X}_{N}-\bar{X}_{N}|^2]+\sE[|g^{\lambda_0}(\bar{X}_N,\sP_{\bar{X}_N})-\bar{g}^{\lambda_0}(\bar{X}_N,\sP_{\bar{X}_N})
+\delta \eta
|^2]
)
\\
&=
 C(
\sE[|\delta {X}_{N}|^2]+\sE[|\lambda_0(g(\bar{X}_N,\sP_{\bar{X}_N})-\bar{g}(\bar{X}_N,\sP_{\bar{X}_N}))
+\delta \eta
|^2]
).
\end{split}
\end{align}
Moreover, 
using the Lipschitz continuity of $f$ 
and the fact that $\lambda_0\in [0,1]$
shows 
 for all $i\in \cN_{<N}$,
\begin{align*}
\sE[|f^{\lambda_0}(t_{i})-\bar{f}^{\lambda_0}(t_{i})+\delta \gamma_i|^2]
&\le
2(\sE[ |\delta f^{\lambda_0}(t_{i})|^2]
+\sE[|\hat{f}^{\lambda_0}(t_{i})-\bar{f}^{\lambda_0}(t_{i})+\delta \gamma_i|^2])
\\
&\le 
C
(\sE[|\delta \Theta_{i}|^2]
+\sE[|\hat{f}^{\lambda_0}(t_{i})-\bar{f}^{\lambda_0}(t_{i})+\delta \gamma_i|^2])
\\
&=
C
(\sE[|\delta \Theta_{i}|^2]
+\sE[|\lambda_0(\hat{f}(t_{i})-\bar{f}(t_{i}))+\delta \gamma_i|^2]),
\end{align*}
which, along with Young's inequality, implies for all $\eps>0$ that
\begin{align*}
&\sum_{j=i}^{N-1} 
 \sE[    \la 2\delta Y_{j}-(f^{\lambda_0}(t_{j})-\bar{f}^{\lambda_0}(t_{j})+\delta \gamma_j)\tau_N, 
 (f^{\lambda_0}(t_{j})-\bar{f}^{\lambda_0}(t_{j})+\delta \gamma_j)\tau_N \ra ]
 \\ 
 &\le \sum_{j=i}^{N-1} \bigg(
 \tfrac{1}{\eps}\sE[|\delta Y_{j}|^2]\tau_N+
 \eps\sE[|f^{\lambda_0}(t_{j})-\bar{f}^{\lambda_0}(t_{j})+\delta \gamma_j|^2]\tau_N
 +\sE[|f^{\lambda_0}(t_{j})-\bar{f}^{\lambda_0}(t_{j})+\delta \gamma_j|^2]\tau_N^2
 \bigg)
\\
 &\le \sum_{j=i}^{N-1} \bigg(
 \tfrac{1}{\eps}\sE[|\delta Y_{j}|^2]\tau_N+
 C(\eps+\tau_N)
 \big(
 \sE[|\delta \Theta_{j}|^2]\tau_N
+\sE[|\lambda_0(\hat{f}(t_{j})-\bar{f}(t_{j}))+\delta \gamma_j|^2]\tau_N
 \big)
 \bigg)
 \\
  &\le 
\big( \tfrac{1}{\eps}+C(\eps+\tau_N)\big)
\bigg( \sE[|\delta Y_{i}|^2]\tau_N+\sum_{j=i+1}^{N-1}  \sE[|\delta Y_{j}|^2]\tau_N\bigg)+
C(\eps+\tau_N)
\sum_{j=i}^{N-1}\sE[|\delta Z_{j}|^2]\tau_N
\\
&\quad +
C(\eps+\tau_N)\sum_{j=i}^{N-1} 
\big(
 \sE[|\delta X_{j}|^2]\tau_N
+\sE[|\lambda_0(\hat{f}(t_{j})-\bar{f}(t_{j}))+\delta \gamma_j|^2]\tau_N
 \big).
\end{align*}
Then by   choosing a sufficiently small  $\eps>0$,
we see from \eqref{eq:delta Y_i_estimate} that,  there exists $K_1\in \sN$, 
depending only on $T$ and $L$, such that  for all $N\in \sN\cap[K_1,\infty)$
and  all $i\in \cN_{<N}$,
\begin{align*}
\begin{split}
&\sE[|\delta Y_{i}|^2]+\sum_{j=i}^{N-1}\sE[| \delta {Z}_j|^2\tau_N+|\Delta  (\delta {M})_j|^2]
\\
&\le
\sE[|\delta Y_{N}|^2]
+
 C\bigg(
\sum_{j=i+1}^{N-1}  \sE[|\delta Y_{j}|^2]\tau_N+
\sum_{j=i}^{N-1} 
\big(
 \sE[|\delta X_{j}|^2]\tau_N
+\sE[|\lambda_0(\hat{f}(t_{j})-\bar{f}(t_{j}))+\delta \gamma_j|^2]\tau_N
 \big)
 \bigg).
   \end{split}
\end{align*}
Then a direct application of Gronwall's inequality {  in Lemma \ref{lemma:gronwall}},
 the estimate \eqref{eq:delta Y_N_estimate}
 and the fact that $\delta M$ is a martingale with $\delta M_0=0$
 shows that
\begin{align}\l{eq:deltaYZ_stab}
\begin{split}
&\max_{i\in \cN}\sE[|\delta Y_i|^2]+\sum_{i=0}^{N-1}\sE[| \delta {Z}_i|^2]\tau_N+\sE[|\delta {M}_N|^2]
\\
&\le
 C\bigg(
 \sE[|\delta {X}_{N}|^2]+
 \sum_{i=0}^{N-1}  \sE[|\delta X_{i}|^2]\tau_N
 +
 \sE[|\lambda_0(g(\bar{X}_N,\sP_{\bar{X}_N})-\bar{g}(\bar{X}_N,\sP_{\bar{X}_N}))
+\delta \eta
|^2]
\\
&\quad
+
 \sum_{i=0}^{N-1}
\sE[|\lambda_0(\hat{f}(t_{i})-\bar{f}(t_{i}))+\delta \gamma_i|^2]\tau_N
 \bigg).
   \end{split}
\end{align}

Now we are ready to establish the desired stability result in Proposition \ref{thm:stab_exp}
by assuming $N\in \sN\cap[K_1,\infty)$.
Note that (H.\ref{assum:fbsde_discrete_exp}\ref{item:monotone_exp}) implies that one of the following two cases must be true, i.e., 
(1) $m< n$,  or $m=n$ with $\beta_1>0$;
 (2)
$m> n$, or $m=n$ with $\a>0,\beta_2>0$
(recall that $\a,\beta_1,\beta_2\ge 0$, $\a+\b_1>0$ and $\beta_1+\beta_2>0$, 
hence  when $m=n$, we have
 either $\beta_1>0$ or $\a,\b_2>0$).
 
For the first case, 
the fact that $G\in \sR^{m\t n}$ is full-rank
and $n\ge m$
shows that 
$|\cdot|_{G^*}: \sR^{m\t m'}\ni x\mapsto |G^*x|\in \sR $ is a norm on $\sR^{m\t m'}$ for any $m'\in \sN$.
Thus the equivalence of norms on Euclidean spaces
and Lemma  \ref{lemma:stab_a_priori_exp} (with $\eps_3={\b_1}/{2}$)
imply for all $\eps_1,\eps_2>0$  that
\begin{align*}
\begin{split}
& \sum_{i=0}^{N-1}
  \sE[
|\delta Y_{i}|^2+|\delta Z_i|^2 ]\tau_N
\\
& \le
\eps_1\sE[ |G^*\delta Y_0|^2]
+\frac{C}{\eps_1}\sE[ | \delta \xi_0|^2]
+
\eps_2
\sE[ |G\delta X_N|^2]
+
\frac{C}{\eps_2}\sE[ |\lambda_0(g(\bar{X}_N,\sP_{\bar{X}_N})-\bar{g}(\bar{X}_N,\sP_{\bar{X}_N}))
+\delta \eta|^2]
\\
&\quad
 +\sum_{i=0}^{N-1}
 \sE\bigg[
\eps_2 | G \delta X_i|^2
+C\bigg(\frac{1}{\eps_2}+\tau_N\bigg)
|
\lambda _0
(\hat{f}(t_{i})-\bar{f}(t_{i}))+\delta \gamma_i|^2
\\
& \quad
+
{C}
\bigg(
(1+\tau_N) |
 \lambda_0(
 \hat{b}(t_{i})-\bar{b}(t_{i}))+\delta \phi_{i}|^2
 +
 | 
 \lambda_0( \hat{\sigma}(t_i)-\bar{\sigma}(t_i))+\delta \psi_{i}|^2
 \bigg) \bigg]\tau_N
 \\
 &\quad
 +C\sum_{i=0}^{N-1}
 \sE[|  \delta X_i|^2+|\delta Y_{i}|^2+| \delta  {Z}_i|^2]\tau^2_N.
\end{split}
\end{align*}
Observe that there exists $K_2\in \sN$, depending only on the constants in (H.\ref{assum:fbsde_discrete_exp}),
such that    for all $N\in \sN\cap[K_2,\infty)$,
$\tau_NC\le 1/2$, which implies 
the above estimate still holds 
without the last two terms 
$\sum_{i=0}^{N-1}
 \sE[|\delta Y_{i}|^2+| \delta  {Z}_i|^2]\tau^2_N$.
Choosing a small $\eps_2$ and   substituting the above estimate into \eqref{eq:deltaX_stab} 
yield  for all $\eps_1>0$,
\begin{align}\l{eq:deltaX_senario1_withY0}
\begin{split}
\sup_{i\in \cN}\sE[|\delta X_{i}|^2]
&\le 
\eps_1\sE[ |G^*\delta Y_0|^2]
+C
\bigg\{
\frac{1}{\eps_1}\sE[|\delta \xi_{0}|^2]
+
\sE[ |\lambda_0(g(\bar{X}_N,\sP_{\bar{X}_N})-\bar{g}(\bar{X}_N,\sP_{\bar{X}_N}))
+\delta \eta|^2]
\\
&\quad
+\sum_{i=0}^{N-1}
\bigg(
\sE[|\lambda_0(\hat{f}(t_{i})-\bar{f}(t_{i}))+\delta \gamma_i|^2]\tau_N
+
\sE[|\lambda_0(\hat{b}(t_{i})-\bar{b}(t_{i}))+\delta \phi_{i}|^2]\tau_N
\\
&
\quad 
+
\sE|\lambda_0(\hat{\sigma}(t_i)-\bar{\sigma}(t_i))+\delta \psi_{i}|^2]\tau_N
\bigg)
\bigg\}
+C\sum_{i=0}^{N-1}
 \sE[|  \delta X_i|^2]\tau^2_N.
\end{split}
\end{align}
which  still holds 
without the   term 
$C\sum_{i=0}^{N-1}
 \sE[|  \delta X_i|^2]\tau^2_N$, 
 as it holds for all sufficiently large $N$, 
$$
C\sum_{i=0}^{N-1} \sE[|  \delta X_i|^2]\tau^2_N\le CT\sup_{i\in \cN}\sE[|  \delta X_i|^2]\tau_N
 \le \tfrac{1}{2}\sup_{i\in \cN}\sE[|  \delta X_i|^2].$$
Then by further  substituting  \eqref{eq:deltaX_senario1_withY0} (with a small $\eps_1$) into \eqref{eq:deltaYZ_stab},
we obtain the desired upper bound for 
$\max_{i\in \cN}\sE[|\delta Y_i|^2]+\sum_{i=0}^{N-1}\sE[| \delta {Z}_i|^2]\tau_N+\sE[|\delta {M}_N|^2]$,
which together with \eqref{eq:deltaX_senario1_withY0} finishes the proof of the desired stability estimate for the  first scenario.

For the alternative case, 
we see from the fact that $G\in \sR^{m\t n}$ is full-rank
and $m\ge n$ that
$|\cdot|_{G}: \sR^{n\t m'}\ni x\mapsto |Gx|\in \sR $ is a norm on $\sR^{n\t m'}$ for any $m'\in \sN$.
Thus the equivalence of norms on Euclidean spaces
and Lemma  \ref{lemma:stab_a_priori_exp}  (with $\eps_2={\min\{1,\a,\b_2\}}/{2}$)
imply for all $\eps_1,\eps_3>0$  that
\begin{align*}
\begin{split}
& \sE[
|\delta X_{N}|^2]
+
 \sum_{i=0}^{N-1}
  \sE[
|\delta X_{i}|^2
 ]\tau_N
\\
& \le
\eps_1\sE[ |G^*\delta Y_0|^2]
+\frac{C}{\eps_1}\sE[ | \delta \xi_0|^2]
+
C\sE[ |\lambda_0(g(\bar{X}_N,\sP_{\bar{X}_N})-\bar{g}(\bar{X}_N,\sP_{\bar{X}_N}))
+\delta \eta|^2]
\\
&\quad
 +\sum_{i=0}^{N-1}
 \sE\bigg[
\eps_3 (|G^*\delta Y_{i}|^2+| G^*\delta  {Z}_i|^2)
+{C}
|
\lambda _0
(\hat{f}(t_{i})-\bar{f}(t_{i}))+\delta \gamma_i|^2
\\
& \quad
+
\frac{C}{\eps_3}
\bigg(
 |
 \lambda_0(
 \hat{b}(t_{i})-\bar{b}(t_{i}))+\delta \phi_{i}|^2
 +
 | 
 \lambda_0( \hat{\sigma}(t_i)-\bar{\sigma}(t_i))+\delta \psi_{i}|^2
 \bigg) \bigg]\tau_N
 \\
& \quad
 +C\sum_{i=0}^{N-1}
 \sE[|  \delta X_i|^2+|\delta Y_{i}|^2+| \delta  {Z}_i|^2]\tau^2_N.
\end{split}
\end{align*}
Observe that the term $\sum_{i=0}^{N-1}
 \sE[|  \delta X_i|^2]\tau^2_N$ on the right-hand side of the above estimate can be eliminated for all sufficiently small $\tau_N$.
Then by choosing a small $\eps_3$, we can see from \eqref{eq:deltaYZ_stab} that
it holds for all sufficiently small $\tau_N,\eps_1>0$ that
\begin{align*}
\begin{split}
&\max_{i\in \cN}\sE[|\delta Y_i|^2]+\sum_{i=0}^{N-1}\sE[| \delta {Z}_i|^2]\tau_N+\sE[|\delta {M}_N|^2]
\\
&\le
 \eps_1\sE[ |G^*\delta Y_0|^2]
+C
\bigg\{
\frac{1}{\eps_1}\sE[|\delta \xi_{0}|^2]
+
\sE[ |\lambda_0(g(\bar{X}_N,\sP_{\bar{X}_N})-\bar{g}(\bar{X}_N,\sP_{\bar{X}_N}))
+\delta \eta|^2]
\\
&\quad
+\sum_{i=0}^{N-1}
\bigg(
\sE[|\lambda_0(\hat{f}(t_{i})-\bar{f}(t_{i}))+\delta \gamma_i|^2]\tau_N
+
\sE[|\lambda_0(\hat{b}(t_{i})-\bar{b}(t_{i}))+\delta \phi_{i}|^2]\tau_N
\\
&
\quad 
+
\sE|\lambda_0(\hat{\sigma}(t_i)-\bar{\sigma}(t_i))+\delta \psi_{i}|^2]\tau_N
\bigg)
\bigg\}.
   \end{split}
\end{align*}
Hence  choosing a small $\eps_1$ in the above estimate
gives us the desired upper bound for 
the left-hand side.
We can then conclude from 
\eqref{eq:deltaX_stab}
the desired stability estimate for the  second scenario,
which subsequently finishes the proof of Proposition \ref{thm:stab_exp}.
\end{proof}


\begin{proof}[Proof of Lemma \ref{lem:linear_existence}]
Throughout this proof, 
for each $n'\in\sN$,
let $\sS^{n'}_>$ be the space of all ${n'}\t {n'}$ symmetric positive definite matrices.
We  separate the proof  into  two cases: $n \ge m$ and $n\le m$.

Let us start with the first case where $n\ge m$.
The fact that $n\ge m$ and  $G\in \sR^{m\t n}$ is full-rank imply that $GG^*\in \sS^m_>$.
Let
 $\bar{X}$ satisfy the following  S$\Delta$E:
$$
\Delta \bar{X}_i=(\sI_n-G^*(GG^*)^{-1}G)(\phi_{i}\tau_N+\psi_i\Delta W_i), \; i\in \cN_{<N}; 
\q \bar{X}_0=(\sI_n-G^*(GG^*)^{-1}G)\xi_0,
$$
and assume that $(\tilde{X},\tilde{Y},\tilde{Z},\tilde{M})\in \cS_N$
 solve the FBS$\Delta$E: for all $i\in \cN_{<N}$,
\begin{subequations}\label{eq:MV-fbsde_tilde_n>m}
\begin{align}
\Delta \tilde{X}_i&=(-\b_1 GG^*\tilde{Y}_{i}+G\phi_{i})\tau_N  +(-\b_1 GG^*\tilde{Z}_i+G\psi_i)\, \Delta W_i, 
\l{eq:fbsde_fwd_tilde_n>m}\\
\Delta \tilde{Y}_i&=-(\b_2\tilde{X}_i+\gamma_i)\tau_N+\tilde{Z}_i\,\Delta W_i+\Delta \tilde{M}_i,
\l{eq:fbsde_bwd_tilde_n>m}\\
\tilde{X}_0&=G\xi_0,\q \tilde{Y}_N=\tilde{X}_N+\eta,
\l{eq:fbsde_terminal_tilde_n>m}
\end{align}
\end{subequations}
then 
 one can easily check
by using the linearity of  equations
 that 
$(X,Y, Z, M)\coloneqq (G^*(GG^*)^{-1}\tilde{X}+\bar{X},\tilde{Y},\tilde{Z},\tilde{M})\in \cS_N$ is a solution to 
\eqref{eq:MV-fbsde_exp_lambda} with $\lambda=0$
(note that $G\bar{X}\equiv 0$
and $ \tilde{X}\equiv GX$ on $[0,T]$).
Hence it suffices to construct a solution to \eqref{eq:MV-fbsde_tilde_n>m}.
For notational simplicity, we shall write $K=GG^*\in \sS^m_>$, $\tilde{\xi}_0=G\xi_0$, $\tilde{\phi}=G\phi$ and $\tilde{\psi}=G\psi$ in the subsequent analysis.

Let us consider the matrices $(P_i)_{i\in \cN}$ satisfying
$P_N=\sI_m$ and
 for each $i\in \cN_{<N}$ that
\bb\l{eq:riccati_n>m}
{P_i-P_{i+1}}
=(\b_2\sI_m-\b_1P_{i+1}KP_{i}){\tau_N}.
\ee
We shall show by induction that it holds for all $i\in \cN$ that $P_{i}\in \sS^m_>$ is uniquely defined and commutes with $K$.
The  induction hypothesis  clearly holds  for the index $N$, and we shall assume it holds for some index $i+1$ with $i\in \cN_{<N}$.
The fact that $K,P_{i+1}\in \sS^m_>$ and $KP_{i+1}=P_{i+1}K$ implies that 
$P_{i+1}K\in \sS^m_>$ and $\sI_m+\b_1 P_{i+1}K\tau_N\in \sS^m_>$, which along with $\b_1,\b_2\ge 0$ shows that $P_i$  is well-defined and can be written as 
\begin{align}\l{eq:P_i_n>m}
\begin{split}
P_i
&=(\sI_m+\b_1 P_{i+1}K\tau_N)^{-1}(P_{i+1}+\b_2\tau_N\sI_m).
\end{split}
\end{align}
Moreover, the fact that $KP_{i+1}=P_{i+1}K$ gives us the  identities
that
$P_{i+1}(\sI_m+\b_1 P_{i+1}K\tau_N)
=(\sI_m+\b_1 P_{i+1}K\tau_N)P_{i+1}$
and 
$
K(\sI_m+\b_1 P_{i+1}K\tau_N)
=(\sI_m+\b_1 P_{i+1}K\tau_N)K$,
which show that 
both 
$P_{i+1}$ and $K$ commute with $(\sI_m+\b_1 P_{i+1}K\tau_N)^{-1}$.
Therefore, we see that 
$P_i\in \sS^m_>$,
and
$P_i$ commutes with $K$,
which shows the induction hypothesis also holds for the index $i\in \cN$.

With the above matrices $(P_i)_{i\in \cN}$ at hand, we consider the following
linear BS$\Delta$E:
$p_N=\eta$, and for all  $i\in \cN_{<N}$ that
\begin{align}\l{eq:dp_n>m}
\Delta p_i=-[P_{i+1}(-\b_1Kp_{i}+\tilde{\phi}_{i})+\gamma_i]\tau_N
+q_i\Delta W_i+\Delta m_i,
\end{align}
where 
 $(p,q,m)\in \cM^2(0,T;\sR^m\t \sR^{m\t d}\t \sR^m)$ are piecewise-constant processes defined on $\pi_N$ satisfying
 $m_0=0$, 
and for all $i\in \cN_{<N}$ that
$\sE_{i}[\Delta m_i] =0$ and  $\sE_{i}[(\Delta m_i)(\Delta W_i)^*] =0$.
The existence of such solutions follows from a standard  backward induction together with the Kunita--Watanabe decomposition (see e.g.~\cite[Theorem 2.2]{bielecki2015}).
Then we define the processes $(\tilde{X},\tilde{Y},\tilde{Z},\tilde{M})$ such that 
$\tilde{M}\equiv m$, 
$\tilde{Z}_i=(\sI_m+\b_1P_{i+1}K)^{-1}(P_{i+1}\tilde{\psi}_i+q_i)$
for all $i\in\cN_{<N}$, 
$\tilde{X}_0=\tilde{\xi}_0$,
\bb\l{eq:dX_n>m}
\Delta \tilde{X}_i=[-\b_1K(P_{i}\tilde{X}_{i}+p_{i})+\tilde{\phi}_{i}]\tau_N
+(-\b_1K\tilde{Z}_i+\tilde{\psi}_i)\Delta W_i
\q \fa i\in \cN_{<N},
\ee
and $\tilde{Y}_i=P_i\tilde{X}_i+p_i$ for all $i\in \cN$.
Note that 
$ \b_1\ge 0$
and $P_{i+1}K\in \sS^m_>$
imply that  $(\tilde{X},\tilde{Y},\tilde{Z},\tilde{M})$ are well-defined adapted processes and satisfy both \eqref{eq:fbsde_fwd_tilde_n>m} and \eqref{eq:fbsde_terminal_tilde_n>m}.
Moreover,
we have for each $i\in \cN_{<N}$ that  
$\Delta \tilde{Y}_i=\Delta P_i\tilde{X}_{i}+ P_{i+1}\Delta \tilde{X}_i+\Delta p_i$.
Hence by substituting   \eqref{eq:riccati_n>m}, \eqref{eq:dp_n>m} and \eqref{eq:dX_n>m}
into the identity, 
we can verify via a straightforward calculation
that 
$(\tilde{X},\tilde{Y},\tilde{Z},\tilde{M})$ also satisfies \eqref{eq:fbsde_bwd_tilde_n>m}.
This proves  the  existence of solutions to  \eqref{eq:MV-fbsde_exp_lambda} 
with $\lambda=0$
for the case where $n\ge m$.

We now proceed to establish the existence of solutions for the second case where $m\ge n$, whose proof is similar to the above analysis.
The fact that $m\ge n$ and  $G\in \sR^{m\t n}$ is full-rank imply that $G^*G\in \sS^n_>$.
%
%
Let 
$(\bar{Y},\bar{Z},\bar{M})$  (where the martingale $\bar{M}$ is strongly orthogonal to $W$)  satisfy the following  
BS$\Delta$E: for all $i\in \cN_{<N}$,
\begin{align*}
\Delta \bar{Y}_i&=-(\sI_m-G(G^*G)^{-1}G^*)\gamma_i\tau_N+\bar{Z}_i\,\Delta W_i+\Delta \bar{M}_i,
\\
 \bar{Y}_N&=(\sI_m-G(G^*G)^{-1}G^*)\eta,
\end{align*}
and assume that  $(\tilde{X},\tilde{Y},\tilde{Z},\tilde{M})\in \cS_N$ 
solve the FBS$\Delta$E: for all $i\in \cN_{<N}$,
\begin{subequations}\label{eq:MV-fbsde_tilde_m>n}
\begin{align}
\Delta \tilde{X}_i&=(-\b_1 \tilde{Y}_{i}+\phi_{i})\tau_N  +(-\b_1 \tilde{Z}_i+\psi_i)\, \Delta W_i, 
\l{eq:fbsde_fwd_tilde_m>n}\\
\Delta \tilde{Y}_i&=-(\b_2G^*G\tilde{X}_i+G^*\gamma_i)\tau_N+\tilde{Z}_i\,\Delta W_i+\Delta \tilde{M}_i,
\l{eq:fbsde_bwd_tilde_m>n}\\
\tilde{X}_0&=\xi_0,\q \tilde{Y}_N=G^*G\tilde{X}_N+G^*\eta,
\l{eq:fbsde_terminal_tilde_m>n}
\end{align}
\end{subequations}
then
the linearity of the equations shows that 
 the $4$-tuple   $(X,Y, Z, M)\in \cS_N$ defined by $X\coloneqq \tilde{X}$, $(Y, Z, M)\coloneqq G(G^*G)^{-1}(\tilde{Y},\tilde{Z},\tilde{M})+(\bar{Y},\bar{Z},\bar{M})$ is a solution to 
\eqref{eq:MV-fbsde_exp_lambda} with $\lambda=0$
(note that $G^*\bar{Y}=G^*\bar{Z}=G^*\bar{M} =0$
 on $[0,T]$).
 Since a standard  backward induction argument together with the Kunita--Watanabe decomposition 
leads to  the  existence of  $(\bar{Y},\bar{Z},\bar{M})$ (see e.g.~\cite[Theorem 2.2]{bielecki2015}),
it remains to construct a solution to \eqref{eq:MV-fbsde_tilde_m>n}.
For notational simplicity, we shall write $K=G^*G\in \sS^n_>$, $\tilde{\gamma}=G^*\gamma\in M^2(0,T;\sR^n)$ and $\tilde{\eta}=G^*\eta\in L^2(\cF_T;\sR^n)$ in the subsequent analysis.

Let us consider the matrices $(P_i)_{i\in \cN}$ satisfying
$P_N=K$ and
 for each $i\in \cN_{<N}$ that
\bb\l{eq:riccati_n<m}
{P_i-P_{i+1}}=(\b_2 K-\b_1 P_{i+1}P_{i}){\tau_N}.
\ee
A straightforward inductive argument shows 
that $P_i\in \sS^n_>$ for all $i\in \cN$  and 
$P_i=(\sI_n+\b_1P_{i+1}\tau_N)^{-1}(P_{i+1}+\b_2K\tau_N)$
for all $i\in \cN_{<N}$.
We shall consider the piecewise-constant processes
$(p,q,m)\in \cM^2(0,T;\sR^n\t \sR^{n\t d}\t \sR^n)$   which satisfy the
linear BS$\Delta$E:
\begin{align}\l{eq:dp_n<m}
\Delta p_i=-[P_{i+1}(-\b_1p_{i}+{\phi}_{i})+\tilde{\gamma}_i]\tau_N
+q_i\Delta W_i+\Delta m_i,
\; i\in \cN_{<N}; \quad 
p_N=\tilde{\eta},
\end{align}
and
enjoy the properties that 
 $m_0=0$, 
and for all $i\in \cN_{<N}$,
$\sE_{i}[\Delta m_i] =0$ and  $\sE_{i}[(\Delta m_i)(\Delta W_i)^*] =0$.
The existence 
of $(p,q,m)$
 follows from a standard  backward induction and the Kunita--Watanabe decomposition. 
We further define the processes $(\tilde{X},\tilde{Y},\tilde{Z},\tilde{M})$ such that 
$\tilde{M}\equiv m$, 
{$\tilde{Z}_i=(\sI_n+\b_1P_{i+1})^{-1}(P_{i+1}\psi_i+q_i)$}
for all $i\in\cN_{<N}$,
$\tilde{X}_0={\xi}_0$,
$$
\Delta \tilde{X}_i=[-\b_1(P_{i}\tilde{X}_{i}+p_{i})+{\phi}_{i}]\tau_N
+(-\b_1\tilde{Z}_i+{\psi}_i)\Delta W_i
\q \fa i\in \cN_{<N},
$$
and $\tilde{Y}_i=P_i\tilde{X}_i+p_i$ for all $i\in \cN$.
Then by using the identity that 
$\Delta \tilde{Y}_i=\Delta P_i\tilde{X}_{i}+ P_{i+1}\Delta \tilde{X}_i+\Delta p_i$,
we can directly verify that $(\tilde{X},\tilde{Y},\tilde{Z},\tilde{M})$ satisfies \eqref{eq:fbsde_bwd_tilde_m>n}
for all $i\in \cN_{<N}$. 
This proves   that \eqref{eq:MV-fbsde_exp_lambda} with $\lambda=0$ admits a solution for the case where $m\ge n$.
\end{proof}

\subsection{Proof of Proposition \ref{thm:time_error}}
\label{appendix:proof_continuous_error}
%
We start by deriving  an upper bound of the squared $L^2$-error between 
 $(X_i,Y_i,\bar{Z}_i)_{i\in \cN}$
 and the solution $(X^{\pi}_i, Y^{\pi}_i, Z^{\pi}_i, M^\pi_i)_{i\in \cN}$ to \eqref{eq:MV-fbsde_exp}.

\begin{Lemma}\l{lemma:a_prior_time_error}
Suppose (H.\ref{assum:fbsde_discrete_exp})-(H.\ref{assum:time_reg}) hold.
Let 
 $\a,\b_1,\b_2$ and $G$ be the constants
 in (H.\ref{assum:fbsde_discrete_exp}\ref{item:monotone_exp}),
 $L$ be the constant
 in (H.\ref{assum:fbsde_discrete_exp}\ref{item:lipschitz_exp}),
 $N\in \sN$, $(X,Y, Z)\in \cM^2(0,T;\sR^n\t \sR^m\t \sR^{m\t d })$ be the solution to \eqref{eq:fbsde_conts_intro},
$(\bar{Z}_i)_{i\in \cN_{<N}}$ be the random variables 
satisfying for all $i\in \cN_{<N}$ that 
$\bar{Z}_i=\frac{1}{\tau_N}\sE_i\big[\int_{t_i}^{t_{i+1}} Z_s\, ds\big]$,
$\bar{Z}$ be a c\`{a}dl\`{a}g extension of $(\bar{Z}_i)_{i\in \cN_{<N}}$ on $\pi_N$,
$(X^\pi,Y^\pi, Z^\pi,M^\pi)\in \cS_N$
be a solution to  \eqref{eq:MV-fbsde_exp} defined on $\pi_N$
and
 $(\delta X,\delta Y, \delta Z)= ( X-{X}^\pi, Y-{Y}^\pi, \bar{Z}-{Z}^\pi)$.
Then 
for all $\eps_1,\eps_2>0$,
\begin{align*}
&
\a \sE[|G \delta X_N|^2]+
\sum_{i=0}^{N-1}
\sE[
\beta_2|G\delta X_{i}|^2
+
\beta_1(
|G^*\delta Y_{i}|^2+|G^*\delta {Z}_i|^2)
 ]\tau_N
\\
& \le
\sum_{i=0}^{N-1}
\sE[
\eps_1(|\delta Y_{i}|^2+|\delta Z_{i}|^2)
+\eps_2|\delta X_{i}|^2
]\tau_N
+
C\big(\ol{\om}(\tau_N)^2
+
\cR_\pi(X,Y,Z)
\big)
\\
&\quad +
C\tau_N^{1/2}
\bigg(
\sum_{i=0}^{N-1}
\sE[
|\delta X_{i}|^2+|\delta Y_{i}|^2+|\delta Z_{i}|^2
]\tau_N
+\sE[| M^\pi_N|^2]
\bigg),
\end{align*}
where 
$C$ is a constant depending only on $\eps_1,\eps_2$ and the constants in  (H.\ref{assum:fbsde_discrete_exp}),
$\ol{\om}$ is the modulus of continuity in (H.\ref{assum:time_reg}),
and $\cR_\pi(X,Y,Z)$ is  defined  in \eqref{eq:L2_Regularity_proj}.
\end{Lemma}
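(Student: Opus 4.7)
The strategy is to mirror the argument for Lemma \ref{lemma:stab_a_priori_exp}, with the ``continuous'' solution $(X_{t_i}, Y_{t_i}, \bar Z_i)_{i\in\cN}$ playing the role of one of the two discrete solutions, and with the additional bookkeeping needed for the time-discretization errors produced when the coefficients are frozen at the grid points.

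First I would rewrite the continuous MV-FBSDE \eqref{eq:fbsde_conts_intro} between consecutive grid points in the ``Euler + residual'' form: for each $i \in \cN_{<N}$,
\begin{align*}
\Delta X_{t_i} &= b(t_i,\Theta_{t_i},\sP_{\Theta_{t_i}})\,\tau_N + \sigma(t_i,\Theta_{t_i},\sP_{\Theta_{t_i}})\,\Delta W_i + R^b_i + R^\sigma_i,\\
\Delta Y_{t_i} &= -f(t_i,\Theta_{t_i},\sP_{\Theta_{t_i}})\,\tau_N + \bar Z_i\,\Delta W_i + R^f_i,
\end{align*}
where $R^b_i, R^\sigma_i, R^f_i$ are drift/diffusion/generator residuals of the form $\int_{t_i}^{t_{i+1}}[\phi(s,\Theta_s,\sP_{\Theta_s})-\phi(t_i,\Theta_{t_i},\sP_{\Theta_{t_i}})]\,ds$ (or the analogous stochastic integral for $\sigma$), and $\bar Z_i$ is defined so that $\int_{t_i}^{t_{i+1}}Z_s\,dW_s - \bar Z_i\Delta W_i$ is a mean-zero martingale difference orthogonal to $\Delta W_i$. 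Subtracting \eqref{eq:MV-fbsde_exp} yields equations for $(\delta X,\delta Y,\delta Z)$ whose principal parts are exactly $\delta\phi(t_i) \coloneqq \phi(t_i,\Theta_{t_i},\sP_{\Theta_{t_i}})-\phi(t_i,\Theta^\pi_i,\sP_{\Theta^\pi_i})$ for $\phi = b,\sigma,f$, plus the residuals, plus (in the backward equation) a $-\Delta M^\pi_i$ term.

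Next I would compute $\Delta\la G\delta X,\delta Y\ra_i$ by the same product rule expansion as in Lemma \ref{lemma:stab_a_priori_exp}, take expectations, and use strong orthogonality of $M^\pi$ to $W$ together with $\sE_i[\Delta W_i(\Delta W_i)^*]=\tau_N \sI_d$. The leading contribution reproduces the monotonicity pairing
\[
\sE[\la \delta b(t_i),G^*\delta Y_i\ra + \la\delta\sigma(t_i),G^*\delta Z_i\ra - \la\delta f(t_i),G\delta X_i\ra]\,\tau_N,
\]
which by (H.\ref{assum:fbsde_discrete_exp}\ref{item:monotone_exp}) is bounded above by $-\beta_2\sE[|G\delta X_i|^2] - \beta_1\sE[|G^*\delta Y_i|^2+|G^*\delta Z_i|^2]$ times $\tau_N$. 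Telescoping over $i$ and invoking the monotonicity of $g$ at the terminal step produces the $\alpha\,\sE[|G\delta X_N|^2]$ boundary term on the left-hand side, together with a collection of error terms to be estimated.

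Finally, the error terms fall into three classes. (i) The cross-products $\la G\Delta\delta X_i,\Delta\delta Y_i\ra$ arising from the discrete product rule are only $\cO(\tau_N)$ (not $\cO(\tau_N^2)$) because of $\Delta W_i$ factors; Young's inequality and Lipschitz continuity bound them by $C\tau_N^{1/2}$ times $\sum_i \sE[|\delta X_i|^2 + |\delta Y_i|^2 + |\delta Z_i|^2]\tau_N + \sE[|M^\pi_N|^2]$, matching the last line of the claim. (ii) The residuals $R^b_i, R^\sigma_i, R^f_i$ split into a coefficient-time-regularity part bounded via (H.\ref{assum:time_reg}) by $\tau_N\ol\om(\tau_N)$, and a path-regularity part bounded via Lipschitz continuity by the $L^2$-fluctuation of $\Theta_s$ over $[t_i,t_{i+1}]$, summing to $\cO(\ol\om(\tau_N)^2 + \cR_\pi(X,Y,Z))$. (iii) The $\bar Z_i$ vs.\ $Z_s$ discrepancy contributes exactly the $\int_{t_i}^{t_{i+1}}|Z_s-\bar Z_i|^2\,ds$ piece of $\cR_\pi(X,Y,Z)$. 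Combining these with Young's inequality (with free parameters $\eps_1,\eps_2$) yields the stated bound. The main obstacle is accounting correctly in class (iii) for the mixed terms where a residual is paired with $\delta Z_i\Delta W_i + \Delta M^\pi_i$: these can be handled only by absorbing an $\cO(\tau_N^{1/2})$ multiple of $\sE[|M^\pi_N|^2]$ onto the right-hand side, which is why $\sE[|M^\pi_N|^2]$ appears in the final bound rather than being eliminated outright.
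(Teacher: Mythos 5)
Your proposal follows essentially the same route as the paper's proof: write the difference of the continuous and discrete equations over each subinterval with the coefficients frozen at the grid comparison point, expand $\Delta\langle G\delta X,\delta Y\rangle_i$ by the discrete product rule, telescope and apply the monotonicity of $(b,\sigma,-f)$ and of $g$, absorb the consistency residuals via (H.2), Lipschitz continuity and Young's inequality with free parameters $\eps_1,\eps_2$, and control the quadratic cross terms (including the one paired with $\int (Z_t-Z^\pi_i)\,dW_t-\Delta M^\pi_i$, which is the source of the $\tau_N^{1/2}\,\sE[|M^\pi_N|^2]$ contribution) by Young's inequality with $\tau_N^{\pm 1/2}$ weights. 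The only point to tidy up is that the frozen $Z$-argument must consistently be $\bar Z_i$ rather than $Z_{t_i}$ (so that the monotonicity pairing produces $|G^*(\bar Z_i-Z^\pi_i)|^2$ and the cross term $\sE[\int_{t_i}^{t_{i+1}}\langle Z_t-\bar Z_i,\delta\sigma(t_i)\rangle\,dt]$ vanishes by the tower property), which your opening sentence and item (iii) already indicate you intend.
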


\begin{proof}[Proof of Lemma \ref{lemma:a_prior_time_error}]
The proof follows from a slight extension of the arguments in Lemma \ref{lemma:stab_a_priori_exp}.
Throughout this proof, 
let ${\Theta}^\pi=({X}^\pi, {Y}^\pi, {Z}^\pi)$,
$\Theta=(X, Y, Z)$,  
$\bar{\Theta}=({X}, {Y}, \bar{Z})$,
and for each $t\in [0,T]$ and $\phi=b, \sigma, f$, 
let ${\phi}^\pi(t)= \phi(t, {\Theta}^\pi_t,\sP_{{\Theta}^\pi_t})$,
${\phi}(t)= \phi(t, {\Theta}_t,\sP_{{\Theta}_t})$
$\bar{\phi}(t)= \phi(t, \bar{\Theta}_t,\sP_{\bar{\Theta}_t})$
and 
$\delta {\phi}(t)=\bar{\phi}(t)- {\phi}^\pi(t)$.

For any given $i\in \cN_{<N}$, 
we can deduce from 
the equations \eqref{eq:MV-fbsde_exp} and \eqref{eq:fbsde_conts_intro}
that
\begin{align}
\Delta (\delta {X})_{i}
&=
\int_{t_i}^{t_{i+1}}(b(t)-{b}^\pi(t_{i}))\,dt +
\int_{t_i}^{t_{i+1}}(\sigma(t)-{\sigma}^\pi(t_i))\,d W_t,
\label{eq:difference_fwd}
\\
\Delta (\delta {Y})_{i}
&=
-\int_{t_i}^{t_{i+1}}(f(t)-{f}^\pi(t_{i}))\,dt+\int_{t_i}^{t_{i+1}} ({Z}_t-Z^\pi_i)\,d W_t -\Delta M^\pi_i,
\label{eq:difference_bwd}
\end{align}
 which along with   $\delta X_0=0$,
$\delta Y_N=g(X_N,\sP_{X_N})-g(X^\pi_N,\sP_{X^\pi_N})$
and 
 the It\^{o} isometry gives 
\begin{align}\l{eq:DeltaGXY_N_conts}
\begin{split}
&\sE[\la G\delta X_N,g(X_N,\sP_{X_N})-g(X^\pi_N,\sP_{X^\pi_N})\ra]
 =\sum_{i=0}^{N-1}\sE[\Delta\la G\delta X,\delta Y\ra_i]
\\
&
=\sum_{i=0}^{N-1} \sE[\la G\Delta \delta X_i, \delta Y_i\ra
+\la G \delta X_i, \Delta \delta Y_i\ra
+\la G \Delta\delta X_i, \Delta \delta Y_i\ra]
\\
&
=\sum_{i=0}^{N-1}
\sE\bigg[ \int_{t_i}^{t_{i+1}}
\bigg(\la G^*(\delta Y)_{i},
b(t)-{b}^\pi(t_{i})\ra 
+
\la G(\delta X)_{i},-(f(t)-{f}^\pi(t_{i}))\ra
\\
&\quad
+\la G^*({Z}_t-Z^\pi_i), \sigma(t)-{\sigma}^\pi(t_i)
\ra
\bigg) \,dt\bigg]
+\sum_{i=0}^{N-1}{\Sigma_i},
\end{split}
\end{align}
where for each $i\in \cN_{<N}$, the term $\Sigma_i$ is defined by
\begin{align}\l{eq:Sigma_i_conts}
\begin{split}
{\Sigma_i}
&=
\sE\bigg[
\bigg\la 
\int_{t_i}^{t_{i+1}}G(b(t)-{b}^\pi(t_{i}))\,dt,\int_{t_i}^{t_{i+1}}-(f(t)-{f}^\pi(t_{i}))\,dt
\bigg\ra
\\
&\quad 
+\bigg\la 
\int_{t_i}^{t_{i+1}}G(b(t)-{b}^\pi(t_{i}))\,dt,\int_{t_i}^{t_{i+1}} ({Z}_t-Z^\pi_i)\,d W_t -\Delta M^\pi_i
\bigg\ra
\\
&\quad
+\bigg\la 
\int_{t_i}^{t_{i+1}}G(\sigma(t)-{\sigma}^\pi(t_i))\,d W_t,\int_{t_i}^{t_{i+1}}-(f(t)-{f}^\pi(t_{i}))\,dt
\bigg\ra
\bigg].
\end{split}
\end{align}
By first adding and subtracting the terms $\bar{b}(t_i)$, $\bar{f}(t_i)$,  $\bar{Z}_i$ and $\bar{\sigma}(t_i)$
in \eqref{eq:DeltaGXY_N_conts} and then applying the monotonicity condition (H.\ref{assum:fbsde_discrete_exp}\ref{item:monotone_exp}),
 \begin{align}
\a\sE[| G\delta X_N|^2]
&\le
\sum_{i=0}^{N-1}
\sE[ 
\la G^*\delta Y_{i},
\delta {b}(t_{i})\ra 
+
\la G\delta X_{i},-\delta {f}(t_{i}))\ra
+\la G^*\delta Z_i, \delta {\sigma}(t_i)
\ra
 ]\tau_N
+\sum_{i=0}^{N-1}\Psi_i
+\sum_{i=0}^{N-1}{\Sigma_i}
\nb\\
&\le
-\sum_{i=0}^{N-1}
\sE[ 
\b_1 
(| G^*\delta Y_{i}|^2
 +
|G^*\delta Z_i|^2)
+
\b_2|G\delta X_{i}|^2
]\tau_N
+\sum_{i=0}^{N-1}\Psi_i
+\sum_{i=0}^{N-1}{\Sigma_i},
\l{eq:DeltaGXYZ_N_conts}
\end{align}
where for each $i\in \cN_{<N}$, the term $\Psi_i$ is defined by
\begin{align*}
\begin{split}
{\Psi_i}
&=
\sE\bigg[ \int_{t_i}^{t_{i+1}}
\bigg(\la G^*\delta Y_{i},
b(t)-\bar{b}(t_{i})\ra 
+
\la G\delta X_{i},-(f(t)-\bar{f}(t_{i}))\ra
\\
&\quad
+\la G^*({Z}_t-\bar{Z}_i), \sigma(t)-\bar{\sigma}(t_i)
\ra
+\la G^*\delta Z_i, \sigma(t)-\bar{\sigma}(t_i)
\ra
\bigg) \,dt\bigg].
\end{split}
\end{align*}
Note that
the derivation 
 of $\Psi_i$ also used 
$\sE \big[\int_{t_i}^{t_{i+1}} \la {Z}_t-\bar{Z}_i, \delta \sigma (t_i)
\ra \,dt\big]=0$.

Now we proceed to estimate $\Psi_i$ and $\Sigma_i$ for a given $i\in \cN_{<N}$. 
By using Young's inequality,
(H.\ref{assum:fbsde_discrete_exp}\ref{item:lipschitz_exp}),
(H.\ref{assum:time_reg})  
and 
the inequality that $\cW^2_2(\sP_{\bar{\Theta}_i},\sP_{{\Theta^\pi_i}})\le \sE[|\bar{\Theta}_i-{\Theta^\pi_i}|^2]$,
it holds for all $\eps_1,\eps_2>0$ that, 
there exists a constant $C_{(\eps_1,\eps_2,G,L)}>0$, depending only on $\eps_1$, $\eps_2$, $G$ and $L$,
such that 
\begin{align}\l{eq:Psi_i_conts}
\begin{split}
|{\Psi_i}|
&\le 
\sE\bigg[ \int_{t_i}^{t_{i+1}}
\bigg(
\eps_1(|\delta Y_{i}|^2+|\delta Z_{i}|^2)
+
\eps_2|\delta X_{i}|^2
+
C_{(\eps_1,\eps_2,G,L)}(\bar{\om}(\tau_N)^2+
|\Theta_t-\bar{\Theta}_i|^2)
\bigg) \,dt\bigg]
\\
&= 
\sE[
\eps_1(|\delta Y_{i}|^2+|\delta Z_{i}|^2)
+\eps_2|\delta X_{i}|^2
]\tau_N
+
C_{(\eps_1,\eps_2,G,L)}\bigg(
\bar{\om}(\tau_N)^2\tau_N+
\sE
\bigg[
 \int_{t_i}^{t_{i+1}}
|\Theta_t-\bar{\Theta}_i|^2
 \,dt
 \bigg]\bigg).
\end{split}
\end{align}
We then turn to the term $\Sigma_i$ by referring the three quantities in \eqref{eq:Sigma_i_conts} as $\Sigma_{i,1}$,
$\Sigma_{i,2}$ and $\Sigma_{i,3}$. 
For notational simplicity, we shall denote by $C$  a generic  positive constant, 
which depends only on the constants   in (H.\ref{assum:fbsde_discrete_exp})
and  may take a different value at each occurrence.
We start by using
$\delta {b}(t_{i})\in L^2(\cF_{t_i};\sR^n)$,
Young's inequality 
and the It\^{o} isometry  
to estimate the term $\Sigma_{i,2}$:
\begin{align*}
\begin{split}
\Sigma_{i,2}
&= 
\sE\bigg[
\bigg\la 
\int_{t_i}^{t_{i+1}}G(b(t)-\bar{b}(t_i)+\delta {b}(t_{i}))\,dt,\int_{t_i}^{t_{i+1}} ({Z}_t-Z^\pi_i)\,d W_t -\Delta M^\pi_i
\bigg\ra
\bigg]
\\
&\le
\frac{\tau_N^{-1/2}}{4}
\sE\bigg[\bigg| 
\int_{t_i}^{t_{i+1}}G(b(t)-\bar{b}(t_i))\,dt
\bigg|^2\bigg]
+{\tau_N^{1/2}}
\sE\bigg[\bigg|
\int_{t_i}^{t_{i+1}} ({Z}_t-Z^\pi_i)\,d W_t -\Delta M^\pi_i
\bigg|^2\bigg]
\\
&=
\frac{\tau_N^{-1/2}}{4}
\sE\bigg[\bigg| 
\int_{t_i}^{t_{i+1}}G(b(t)-\bar{b}(t_i))\,dt
\bigg|^2\bigg]
+{\tau_N^{1/2}}
\sE\bigg[
\int_{t_i}^{t_{i+1}} |{Z}_t-Z^\pi_i|^2\,d t
+ |\Delta M^\pi_i|^2
\bigg],
\end{split}
\end{align*}
from which,  by using  H\"{o}lder's inequality,
the fact that 
$\sE \big[\int_{t_i}^{t_{i+1}} \la {Z}_t-\bar{Z}_i, \delta Z_i
\ra \,dt\big]=0$
and the assumptions
(H.\ref{assum:fbsde_discrete_exp}\ref{item:lipschitz_exp})
and
(H.\ref{assum:time_reg}),  
we can obtain that
\begin{align}\l{eq:Sigma_i2_conts}
\begin{split}
\Sigma_{i,2}
&\le
\frac{\tau_N^{1/2}}{4}
\sE\bigg[
\int_{t_i}^{t_{i+1}}|G(b(t)-\bar{b}(t_i))|^2\,dt
\bigg]
+{\tau_N^{1/2}}
\sE\bigg[
\int_{t_i}^{t_{i+1}} (|{Z}_t-\bar{Z}_i|^2+|\delta Z_i|^2)\,d t
+ |\Delta M^\pi_i|^2
\bigg]
\\
&\le 
C\tau_N^{1/2}
\bigg(
\bar{\om}(\tau_N)^2\tau_N+
\sE
\bigg[
 \int_{t_i}^{t_{i+1}}
|\Theta_t-\bar{\Theta}_i|^2
 \,dt
 \bigg]
+\sE[|\delta Z_i|^2]\tau_N
+ \sE[|\Delta M^\pi_i|^2]
\bigg).
\end{split}
\end{align}
Similarly,  
by using Young's inequality, the It\^{o} isometry,  H\"{o}lder's inequality,
 (H.\ref{assum:fbsde_discrete_exp}\ref{item:lipschitz_exp})
and
(H.\ref{assum:time_reg}),  
we can obtain the following upper bound of  $\Sigma_{i,3}$:
\begin{align}\l{eq:Sigma_i3_conts}
\begin{split}
\Sigma_{i,3}
&=
\sE\bigg[\bigg\la 
\int_{t_i}^{t_{i+1}}G(\sigma(t)-{\sigma}^\pi(t_i))\,d W_t,\int_{t_i}^{t_{i+1}}-(f(t)-\bar{f}(t_i)+\delta {f}(t_{i}))\,dt
\bigg\ra
\bigg]
\\
&\le
\tau_N^{1/2}\sE\bigg[\bigg| 
\int_{t_i}^{t_{i+1}}G(\sigma(t)-{\sigma}^\pi(t_i))\,d W_t\bigg|^2\bigg]
+\frac{\tau_N^{-1/2}}{4}
\sE\bigg[\bigg|\int_{t_i}^{t_{i+1}}(f(t)-\bar{f}(t_i))\,dt\bigg|^2
\bigg]
\\
&\le 
C\tau_N^{1/2}\sE\bigg[
\int_{t_i}^{t_{i+1}}|\sigma(t)-\bar{\sigma}(t_i)+\delta {\sigma}(t_i)|^2\,d t\bigg]
+C\tau_N^{1/2}
\sE\bigg[\int_{t_i}^{t_{i+1}}|f(t)-\bar{f}(t_i)|^2\,dt
\bigg]
\\
&\le 
C\tau_N^{1/2}
\bigg(
\bar{\om}(\tau_N)^2\tau_N+
\sE
\bigg[
 \int_{t_i}^{t_{i+1}}
|\Theta_t-\bar{\Theta}_i|^2
 \,dt
 \bigg]
+\sE[|\delta \Theta_i|^2]\tau_N
\bigg).
\end{split}
\end{align}
Furthermore,
by  Young's inequality, H\"{o}lder's inequality,
 (H.\ref{assum:fbsde_discrete_exp}\ref{item:lipschitz_exp}),
(H.\ref{assum:time_reg})
and the fact that $\tau_N\le T$, 
\begin{align}\l{eq:Sigma_i1_conts}
\begin{split}
\Sigma_{i,1}
&\le
\frac{1}{2}
\sE\bigg[\bigg| 
\int_{t_i}^{t_{i+1}}G(b(t)-{b}^\pi(t_{i}))\,dt\bigg|^2\bigg]
+\frac{1}{2}
\sE\bigg[\bigg|\int_{t_i}^{t_{i+1}}(f(t)-{f}^\pi(t_{i}))\,dt\bigg|^2
\bigg]
\\
&\le 
C\tau_N\sE\bigg[
\int_{t_i}^{t_{i+1}}|b(t)-\bar{b}(t_i)+\delta {b}(t_i)|^2\,d t\bigg]
+C\tau_N
\sE\bigg[\int_{t_i}^{t_{i+1}}|f(t)-\bar{f}(t_i)+\delta {f}(t_i)|^2\,dt
\bigg]
\\
&\le 
C\tau_N^{1/2}
\bigg(
\bar{\om}(\tau_N)^2\tau_N+
\sE
\bigg[
 \int_{t_i}^{t_{i+1}}
|\Theta_t-\bar{\Theta}_i|^2
 \,dt
 \bigg]
+\sE[|\delta \Theta_i|^2]\tau_N
\bigg).
\end{split}
\end{align}
The desired conclusion then follows by
combining  \eqref{eq:DeltaGXYZ_N_conts}, \eqref{eq:Psi_i_conts}, \eqref{eq:Sigma_i2_conts}
\eqref{eq:Sigma_i3_conts}, \eqref{eq:Sigma_i1_conts} and using
$\sum_{i=0}^{N-1}\sE\big[
 \int_{t_i}^{t_{i+1}}
|\Theta_t-\bar{\Theta}_i|^2
 \,dt \big]\le \max\{T,1\}\cR_\pi(X,Y,Z)$
and 
$\sum_{i=0}^{N-1}\sE[|\Delta M^\pi_i|^2]=\sE[| M^\pi_N|^2]$.
\end{proof}


\begin{proof}[Proof of Proposition \ref{thm:time_error}]
This proof follows from an analogue argument as that for Proposition \ref{thm:stab_exp}.
Throughout this proof, 
let $\bar{Z}$ be a c\`{a}dl\`{a}g extension of the random variables $(\bar{Z}_i)_{i\in \cN_{<N}}$,
let $N\in \sN$ be  sufficiently large 
such that  \eqref{eq:MV-fbsde_exp} defined on $\pi_N$
admits a unique solution $(X^\pi,Y^\pi, Z^\pi, M^\pi) \in \cS_N$,
let ${\Theta}^\pi=({X}^\pi, {Y}^\pi, {Z}^\pi)$,
$\Theta=(X, Y, Z)$,  
$\bar{\Theta}=({X}, {Y}, \bar{Z})$,
$( \delta X,\delta Y, \delta Z)= (  X-{X}^\pi, Y-{Y}^\pi, \bar{Z}-{Z}^\pi)$,
and for each $t\in [0,T]$, $\phi=b, \sigma, f$, 
let ${\phi}^\pi(t)= \phi(t, {\Theta}^\pi_t,\sP_{{\Theta}^\pi_t})$,
${\phi}(t)= \phi(t, {\Theta}_t,\sP_{{\Theta}_t})$
$\bar{\phi}(t)= \phi(t, \bar{\Theta}_t,\sP_{\bar{\Theta}_t})$,
and 
$\delta {\phi}(t)=\bar{\phi}(t)- {\phi}^\pi(t)$.
We   denote by $C$ a generic constant, 
which
depends on   constants   in (H.\ref{assum:fbsde_discrete_exp})
but independent of $N$,
and  may take a different value at each occurrence.

Note that
by slightly modifying the arguments for \eqref{eq:deltaX_stab} in Proposition \ref{thm:stab_exp},
we 
can obtain from \eqref{eq:difference_fwd},
 Gronwall's inequality {  in Lemma \ref{lemma:gronwall}}, 
 (H.\ref{assum:fbsde_discrete_exp}),
 (H.\ref{assum:time_reg}) 
 and
  $\delta X_0=0$ 
 that
\begin{align}
\max_{i\in \cN}\sE[  |\delta X_{i}|^2]
\nb
  &\le
 C\bigg(
 \ol{\om}(\tau_N)^2+\sum_{i=0}^{N-1}
\sE \bigg[\int_{t_i}^{t_{i+1}} 
 |\Theta_t-\bar{\Theta}_i|^2
\,dt\bigg]
+\sum_{i=0}^{N-1} \sE[
|\delta {Y}_{i}|^2
+|\delta {Z}_{i}|^2
]\tau_N
 \bigg)
 \nb
 \\
 &\le 
  C\bigg(
 \ol{\om}(\tau_N)^2+
 \cR_\pi(X,Y,Z)
+\sum_{i=0}^{N-1} \sE[
|\delta {Y}_{i}|^2
+|\delta {Z}_{i}|^2
]\tau_N
 \bigg),
 \label{eq:time_error_X}
  \end{align}
with the quantity $\cR_\pi(X,Y,Z)$ defined as in \eqref{eq:L2_Regularity_proj}.
On the other hand, 
with a slight modification of  the arguments for 
\eqref{eq:delta Y_i_estimate}
 in Proposition \ref{thm:stab_exp},
we  can obtain from \eqref{eq:difference_bwd}
and the product formula for $\Delta \la \delta Y,\delta Y\ra_i$
that
\begin{align*}
\begin{split}
&\sE[|\delta Y_{i}|^2]+\sum_{j=i}^{N-1}\sE\bigg[\int_{t_j}^{t_{j+1}}| Z_t- {Z}^\pi_j|^2\,dt +|\Delta M^\pi_j|^2
\bigg]
\\
&=
\sE[|\delta Y_{N}|^2]
+\sum_{j=i}^{N-1} 
 \sE\bigg[   \bigg \la 2\delta Y_{j}- \int_{t_j}^{t_{j+1}}(f(t)-{f}^\pi(t_{j}))\,dt,
  \int_{t_j}^{t_{j+1}}(f(t)-{f}^\pi(t_{j}))\,dt
 \bigg \ra \bigg] 
 \\
 & \quad
 +\sum_{j=i}^{N-1} 
 \sE\bigg[   \bigg \la  \int_{t_j}^{t_{j+1}}(f(t)-{f}^\pi(t_{j}))\,dt,
  \int_{t_j}^{t_{j+1}} ({Z}_t-Z^\pi_j)\,d W_t
  -\Delta M^\pi_j
 \bigg \ra \bigg].
 \end{split}
\end{align*}
Note that for each $i\in \cN_{<N}$, by 
applying Young's inequality,
the It\^{o} isometry,  H\"{o}lder's inequality,
 (H.\ref{assum:fbsde_discrete_exp}\ref{item:lipschitz_exp}),
we see for all $\eps>0$ that
 the last term in the above inequality can be estimated as:
\begin{align*}
\begin{split}
 &\sE\bigg[   \bigg \la  \int_{t_i}^{t_{i+1}}(f(t)-{f}^\pi(t_{i}))\,dt,
  \int_{t_i}^{t_{i+1}} ({Z}_t-Z^\pi_i)\,d W_t
    -\Delta M^\pi_i
 \bigg \ra \bigg]
 \\
 &\quad 
 \le
\frac{1}{4\eps} \sE\bigg[   \bigg|  \int_{t_i}^{t_{i+1}}(f(t)-{f}^\pi(t_{i}))\,dt\bigg|^2\bigg]
+
\eps\sE\bigg[ \bigg| \int_{t_i}^{t_{i+1}} ({Z}_t-Z^\pi_i)\,d W_t
  -\Delta M^\pi_i
 \bigg|^2 \bigg]
 \\
 &\quad 
 \le
\frac{C\tau_N}{\eps} \sE\bigg[     \int_{t_i}^{t_{i+1}}
\bigg(\ol{\om}(\tau_N)^2+|\Theta_t-\bar{\Theta}_i|^2+|\delta \Theta_i|^2
\bigg)
\,dt\bigg]
+
\eps\sE\bigg[  \int_{t_i}^{t_{i+1}} |{Z}_t-Z^\pi_i|^2\,dt
 +  |\Delta M^\pi_i|^2
 \bigg].
 \end{split}
\end{align*}
Hence, by using Gronwall's inequality {  in Lemma \ref{lemma:gronwall}}
and 
the identity that
$\sE[\int_{t_i}^{t_{i+1}}| Z_t- {Z}^\pi_i|^2\,dt]
=
\sE[\int_{t_i}^{t_{i+1}}| Z_t- \bar{Z}_i|^2\,dt]
+\sE[|\delta Z_i|^2]\tau_N$ for all $i\in \cN_{<N}$,
  a similar argument as that for \eqref{eq:deltaYZ_stab}
 in Proposition \ref{thm:stab_exp} shows 
 that for all sufficiently large $N$,
\begin{align}
&\max_{i\in \cN}\sE[  |\delta Y_{i}|^2]+\sum_{i=0}^{N-1}\sE[|\delta Z_i|^2]\tau_N +\sE[|M^\pi_N|^2]
\nb
\\
  &\le
 C\bigg(
 \sE[|\delta Y_N|^2]
 +\sum_{i=0}^{N-1}\sE[|\delta X_i|^2]\tau_N
 +
 \ol{\om}(\tau_N)^2
+\sum_{i=0}^{N-1}
\bigg[\int_{t_i}^{t_{i+1}} 
 |\Theta_t-\bar{\Theta}_i|^2
\,dt\bigg]
 \bigg)
 \nb
 \\
  &\le
 C\bigg(
 \sE[|\delta X_N|^2]
 +\sum_{i=0}^{N-1}\sE[|\delta X_i|^2]\tau_N
 +
 \ol{\om}(\tau_N)^2
+\cR_\pi(X,Y,Z)
 \bigg).
\l{eq:time_error_YZ}
  \end{align}

Hence, in the case where 
  either $m< n$ or $m=n$ with $\beta_1>0$ holds,
  we can obtain from Lemma \ref{lemma:a_prior_time_error} that
  it holds for all $\eps>0$
  and all sufficiently large $N\in \sN$ that
\begin{align*}
&
\sum_{i=0}^{N-1}
\sE[
|\delta Y_{i}|^2+|\delta {Z}_i|^2
 ]\tau_N
\\
 &\le
\sum_{i=0}^{N-1}
\eps\sE[
|\delta X_{i}|^2]\tau_N
+ C_{(\eps)}
\bigg(\ol{\om}(\tau_N)^2
+
\cR_\pi(X,Y,Z)
+\tau_N^{1/2}
\sum_{i=0}^{N-1}
\sE[
|\delta X_{i}|^2
]\tau_N +\tau_N^{1/2}\sE[|M^\pi_N|^2]
\bigg),
\end{align*}  
for some constant  $C_{(\eps)}$ depending on $\eps$.
Then we can conclude the desired estimate by first using \eqref{eq:time_error_X} and then \eqref{eq:time_error_YZ};
see the proof of   Proposition \ref{thm:stab_exp} for detailed arguments.
For the alternative case  where 
  either  $m>n$ or $m=n$ with $\a, \beta_2>0$ holds,
Lemma \ref{lemma:a_prior_time_error} shows that 
 for all $\eps>0$, 
\begin{align*}
 \sE[|\delta X_N|^2]+
\sum_{i=0}^{N-1}
\sE[
|\delta X_{i}|^2
 ]\tau_N
 &
 \le
\eps
\sum_{i=0}^{N-1}
\sE[
|\delta Y_{i}|^2+|\delta {Z}_i|^2]\tau_N
+
C_{(\eps)} \bigg(\ol{\om}(\tau_N)^2
+
\cR_\pi(X,Y,Z)
\\
&\quad
+\tau_N^{1/2}
\sum_{i=0}^{N-1}
\sE[
|\delta Y_{i}|^2+|\delta Z_{i}|^2
]\tau_N
+\tau_N^{1/2}\sE[| M^\pi_N|^2]
\bigg),
\end{align*}
for some constant  $C_{(\eps)}$ depending on $\eps$.
Then we can conclude the desired estimate by first using  \eqref{eq:time_error_YZ} and then \eqref{eq:time_error_X}.
\end{proof}

\end{appendices}

\section*{Acknowledgements}
\noindent
Wolfgang Stockinger is supported by a special Upper Austrian  Government grant.


\end{document}